\theoremstyle{plain}
\newtheorem{thm}{Theorem}[section]
\newtheorem{lem}[thm]{Lemma}
\newtheorem{cor}[thm]{Corollary}
\theoremstyle{definition}
\newtheorem{defn}[thm]{Definition}
\newtheorem{prob}[thm]{Problem}
\theoremstyle{remark}
\newtheorem{rmk}[thm]{Remark}
\newcommand{\of}[1]{\!\left(#1\right)}
\newcommand{\setZ}{\mathbb{Z}}
\newcommand{\setR}{\mathbb{R}}
\newcommand{\setN}{\mathbb{N}}
\newcommand{\setC}{\mathbb{C}}
\newcommand{\iu}{\mathfrak{i}}
\newcommand{\pardiffby}[2]{\frac{\partial #1}{\partial #2}}
\newcommand{\twovector}[2]{\begin{pmatrix}#1\\#2\end{pmatrix}}
\newcommand{\twomatrix}[4]{\begin{pmatrix}#1&#2\\#3&#4\end{pmatrix}}
\DeclareMathOperator{\id}{\mathrm{Id}}
\par\color{blue}\textsubscript{#1}}%
\par\color{red}\textsubscript{#1}}%
\newcommand{\domain}{\Omega}
\newcommand{\Oi}{{\domain_{\rm int}}}
\newcommand{\Oe}{{\domain_{\rm ext}}}
\newcommand{\bdi}{\Gamma}
\newcommand{\X}{\mathcal X}
\newcommand{\Y}{\mathcal Y}
\newcommand{\Xint}{\mathcal X_{\mathrm{int}}}
\newcommand{\base}{\mathcal B}
\newcommand{\radbase}{\radmod\base}
\newcommand{\bdbase}{\bdmod\base}
\newcommand{\radmod}[1]{\tilde{#1}}
\newcommand{\bdmod}[1]{\hat{#1}}
\newcommand{\sol}{u}
\newcommand{\radsol}{\radmod\sol}
\newcommand{\rhs}{f}
\newcommand{\pot}{p}
\newcommand{\testf}{v}
\newcommand{\radtestf}{\radmod\testf}
\DeclareMathOperator{\calQ}{\mathcal{Q}}
\newcommand{\vecs}[1]{\mathbf{#1}}
\newcommand{\radvar}{\xi}
\newcommand{\bdvar}{\vecs{\bdmod{x}}}
\newcommand{\varx}{\vecs x}
\newcommand{\mbf}{m}
\newcommand{\lbf}{s}
\newcommand{\mbfint}{\mbf_{\mathrm{int}}}
\newcommand{\lbfint}{\lbf_{\mathrm{int}}}
\newcommand{\mbfext}{\mbf_{\mathrm{ext}}}
\newcommand{\lbfext}{\lbf_{\mathrm{ext}}}
\newcommand{\mbfrad}{\radmod\mbf}
\newcommand{\mbfbd}{\bdmod\mbf}
\newcommand{\lbfrad}{\radmod\lbf}
\newcommand{\lbfbd}{\bdmod\lbf}
\begin{document}
\title{Complex scaled infinite elements for exterior Helmholtz problems\footnote{The authors acknowledge support from the Austrian Science Fund (FWF): P26252.}}
\author{Lothar Nannen\footnote{lothar.nannen@tuwien.ac.at} \and 
  Markus Wess\footnote{markus.wess@tuwien.ac.at}}

\date{\today}
\maketitle

\begin{abstract}
    The technique of complex scaling for time harmonic wave type equations relies on
  a complex coordinate stretching to generate exponentially decaying solutions.
  In this work, we use a Galerkin method with ansatz functions with infinite support
  to discretize complex scaled Helmholtz resonance problems. We show that the approximation error 
  of the method decays super algebraically with respect to the number of unknowns in radial direction.
  Numerical examples underline the theoretical findings and show the superior efficiency of our 
  method compared to a standard perfectly matched layer method. 

\end{abstract}

\section{Introduction}
\label{sec:introduction}
Perfectly matched layers (PMLs) are a popular method for treating acoustic resonance and scattering problems in open domains (cf. \cite{Berenger:94, KimPasciak:09, ColMonk} or \cite{HislopSigal,moiseyev:98} for the same method under the name complex scaling). The idea behind this method is the application of a complex coordinate stretching to the unbounded exterior domain to generate exponentially decaying outgoing solutions. Subsequently the exterior domain is truncated to a bounded layer resulting in a bounded computational domain. The resulting problem on the now bounded domain can be discretized using a standard finite element method.
Complex scaling can be applied in various ways: Parallel to the coordinate axes, resulting in so called cartesian scalings (cf. \cite{BramblePasciak:2013}), in radial direction (cf. \cite{ColMonk}) or in normal direction with respect to a convex interface (cf. \cite{LassasSomersalo:98}). In this work we will focus on radial scalings, although the method can be extended to cartesian or normal scalings in a straightforward way. 

PMLs are rather easy to implement in standard finite element codes but have the downside that there are many method parameters to choose: The scaling function, the thickness of the layer, and the finite element discretization of the layer. All these method parameters have to be balanced to ensure efficiency of the method.

In this work we present a method which is also based on complex scaling but omits the truncation of the exterior domain. In contrast to \cite{Bermudezetal:07}, where singular scaling profiles in combination with standard finite elements are used, we use a standard linear scaling profile combined with non-standard basis functions. That way, as in \cite{Bermudezetal:07} we do not introduce a truncation error.

As radial basis functions we choose generalized Laguerre functions leading to the complex scaled infinite elements. They converge super-algebraically, lead to sparse, well-conditioned discretization matrices and are simple to couple to interior problems.
  It turns out, that for homogeneous exterior problems these complex scaled infinite elements are equivalent to the Hardy space infinite elements introduced in \cite{HohageNannen:09}. Moreover, they can be applied easily to resonance problems with inhomogeneous exterior domains.

The remainder of the paper is organized as follows: In Section \ref{sec:problem_setting} we define the problems in question and give a brief explanation of the method of complex scaling. In Section \ref{sec:tp_ext_disc} we explain the used tensor product exterior discretizations. The complex scaled infinite elements are defined in Section \ref{sec:infinite_elements} and their connection to Hardy space infinite elements is explained in Subsection \ref{sec:hsm}.
In Section \ref{sec:convergence} we develop some results concerning the approximation of Hankel functions by our ansatz functions. These results explicitly give us the dependency of the approximation error on the method parameters and thus help us in choosing optimal parameters. 

A section consisting of numerical experiments underlines our theoretical findings. We numerically test our approximation results and compare the performance of the infinite elements to the one of a conventional radial PML. Moreover, we show that the method is also applicable to an example with inhomogeneous exterior.

\section{Problem setting}
\label{sec:problem_setting}
Since we are concerned with the Helmholtz equation on unbounded domains we start by specifying the domains in question. Afterwards we define the Helmholtz scattering and resonance problem and give a short introduction on the technique of complex scaling leading to the weak and discrete formulation of the problem.

For $d\in\{1,2,3\}$ let $\domain\subset\setR^d$ be an unbounded open domain  such 
that $\domain$ can be split into a bounded interior part $\Oi$ an unbounded 
exterior part $\Oe$ and an interface $\bdi$. $\Oi,\Oe,\bdi$ should fulfill the following assumptions:
\begin{enumerate}[(i)]
  \item{$\domain=\Oi\dot\cup\bdi\dot\cup\Oe$,}
  \item{there exists $R>0$, such that $\Oi=\domain\cap B_R(0)$, $\Oe=\Omega\setminus\overline\Oi$ and $\bdi=\left\{\varx\in\domain:\|x\|=R\right\}$, and}
  \item{$\Oe=\left\{\left(1+\frac{\xi}{R}\right)\bdmod{\vecs x}:\bdmod{\vecs x}\in\bdi,\xi\in\setR_{>0}\right\}$.}

\end{enumerate}
  Note, that these conditions imply that for each $\vecs x\in\Oe\cup\bdi$ there exists a unique pair $(\xi,\bdmod{\vecs x})\in\setR_{\geq 0}\times\bdi$, such that
  \begin{align}\label{eq:mapping}\vecs x=\left(1+\frac{\radvar}{R}\right)\bdvar.\end{align}
  For the mapping defined by (\ref{eq:mapping}) we also write $\vecs x \of{\radvar,\bdvar}$ and $\radvar\of\varx,\bdvar\of\varx$ for the inverse mapping.
  In the case $d=1$, we have $\bdvar\in\{-R,R\}$. 
  Figure \ref{fig:example_domain} illustrates a two dimensional example of the setting described above.
\begin{figure}
  \begin{subfigure}{0.45\textwidth}
  \centering
  \includegraphics{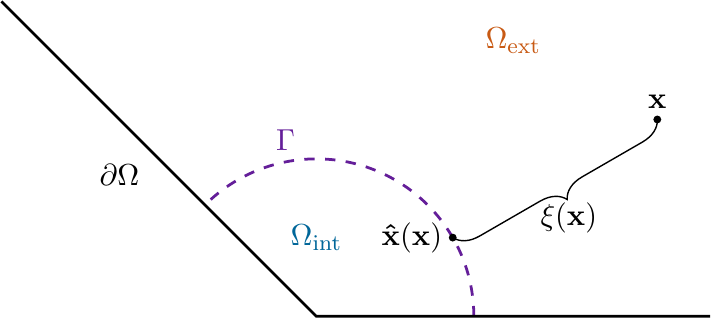}
\end{subfigure}
\begin{subfigure}{0.45\textwidth}
  \centering
  \includegraphics{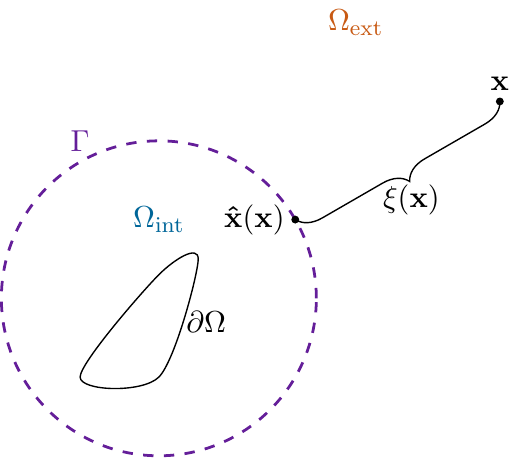}
\end{subfigure}
  \caption{Two dimensional example domains and exterior coordinates}
  \label{fig:example_domain}
\end{figure}
\begin{defn}[Scattering and resonance problem]
  Let $\domain=\Oi\dot\cup\bdi\dot\cup\Oe$ be such that the conditions above hold. Moreover, let $\pot,\rhs\in L_2\of\domain$ such that $p|_\Oe\equiv 1$ and $\mathrm{supp}f\subset\Oi$.
  For a fixed frequency $\omega\in\setC\setminus\{0\}$ we call the problem:
  Find $\sol\in H^2\of\domain$ such that
  \begin{align}
    \label{eq:resonance_prob}
    -\Delta \sol\of{\vecs{x}}-\omega^2\pot\of{\vecs x}\sol\of{\vecs x}&=\rhs\of{\vecs x},&\vecs x&\in\domain,\\
    \nonumber
    \sol&\text{  fulfills some b.c.,}&\vecs x&\in\partial\Omega,\\
    \nonumber
    \sol&\text{  is outgoing,}&\|\vecs x\|&\to\infty,
  \end{align}
  the \emph{Helmholtz scattering problem}.
  The problem: Find $\omega\in \setC^+:=\{z\in\setC:\Re\of\omega\geq 0\},\,\sol\in H^2\of\domain\setminus\{0\}$, such that
  \begin{align}
    \label{eq:scattering_prob}
    -\Delta \sol\of{\vecs{x}}&=\omega^2\pot\of{\vecs x}\sol \of{\vecs x},&\vecs x&\in\domain,\\
    \nonumber
    \sol&\text{  fulfills some b.c.,}&\vecs x&\in\partial\Omega,\\
    \nonumber
    \sol&\text{  is outgoing,}&\|\vecs x\|&\to\infty,
  \end{align}
  is called the \emph{Helmholtz resonance problem}.
\end{defn}
In the following we will focus on the resonance problem.
\subsection{Radiation condition}
\label{sec:radiation_condition}
We call a solution $\sol$ of (\ref{eq:scattering_prob}) or (\ref{eq:resonance_prob}) \emph{outgoing} if it can be written in $\Oe$ (i.e. for all $\radvar\in\setR_{\geq 0},\bdvar\in\bdi$) as 
\begin{align}
  \label{eq:radiation_cond}
  \sol(\vecs x\of{\radvar,\bdvar})=\begin{cases}
    \exp(i\omega\vecs x\of{\radvar,\bdvar}),&d=1,\\
    \sum_{\nu=-\infty}^\infty\alpha_\nu H^{(1)}_{|\nu|}\of{\omega(R+\radvar)}\Phi_\nu\of{\frac{1}{R}\bdvar},&d=2,\\
    \sum_{\nu=0}^\infty\sum_{j=0}^{m_j}\beta_{\nu,j} h^{(1)}_\nu\of{\omega(R+\radvar)}Y_{\nu,j}\of{\frac{1}{R}\bdvar},&d=3,
  \end{cases}
\end{align}
where $H_\nu^{(1)}$ are the Hankel functions of the first kind, $h_\nu^{(1)}$ the spherical Hankel functions of the first kind, $\Phi_\nu$ the cylindrical harmonics and $Y_{\nu,j}$ the spherical harmonics. For the definition of these functions and details to the radiation condition see e.g. \cite{ColtonKress:98}. Note, that in \cite[Chapter 2]{ColtonKress:98} only positive frequencies are considered. Nevertheless, since the functions $H_\nu^{(1)},\, h_\nu^{(1)}$ are analytic for arguments with positive real part, we can use this radiation condition for complex frequencies with positive real part as well. 

An equivalent formulation of this radiation condition can be derived using boundary integral representations (see \cite{SteinbachUnger:2012}). Both formulations imply that an outgoing solution has an analytic continuation to $\vecs x\of{\mathbb C,\bdi}$. In the following we will use the symbol $u$ for the analytic continuation as well. 

\subsection{Complex scaling}
\label{sec:complex_scaling}
To incorporate (\ref{eq:radiation_cond}) into our problem we use the technique of complex scaling.
In this work we only consider linear complex scalings of the form
\begin{align}
\tau(\radvar)&:=\sigma\radvar,\nonumber\\
  \label{eq:scaling}
\gamma\of{\vecs x\of{\radvar,\bdvar}}&:=\begin{cases}
  \vecs x,&\vecs x\in\Oi,\\
  \vecs x\of{\tau\of\radvar,\bdvar},&\vecs x\in\Oe,
\end{cases}
\end{align}
for a given $\sigma\in\mathbb C$ with $\Im\of\sigma>0$. We denote the Jacobian of the scaling by
$$J_\sigma\of{\vecs x}=J_\sigma\of{(x_1,\ldots,x_d)^T}:=\left(\pardiffby{\gamma_i\of{(x_1,\ldots,x_d)^T}}{x_j}\right)_{i=1,\ldots,d,\, j=1,\ldots,d}.$$
Due to (\ref{eq:radiation_cond}) and the fact that the (spherical) Hankel functions $h^{(1)}_\nu,\,H^{(1)}_\nu$ behave like $\radvar\mapsto\exp\of{i\radvar}\alpha_\nu\of\radvar$ for certain rational functions $\alpha_\nu$ and $\radvar\to\infty$ (cf. Definition \ref{defn:hankel} and \cite{ColtonKress:98}), this gives for $\sol$ of the form (\ref{eq:radiation_cond})\begin{align*}
  \lim_{\radvar\to\infty}\sol\of{\gamma\of{\varx\of{\radvar,\bdvar}}}=0,
\end{align*}
for scalings of the form (\ref{eq:scaling}) and frequencies $\omega$ with $\Im\of{\sigma\omega}>0$.

\subsection{Weak formulation}
\label{sec:weak_form}
Since the complex scaled solution $u\circ\gamma$ decays exponentially for $\|\varx\|\to\infty$, it is also square integrable and we can state a weak formulation of (\ref{eq:resonance_prob}) using the following bilinear forms:

\begin{defn}
  For $f,g\in H^1\of{\Oe}$ we define
  \label{defn:int_ext_bf}
  \begin{align*}
    \mbfint\of{f,g}&:=\int_\Oi \pot\of\varx f\of\varx g\of\varx\,d\varx,\\
    \lbfint\of{f,g}&:=\int_\Oi \nabla f\of\varx \cdot\nabla g\of\varx\,d\varx,\\
    \mbfext^\sigma\of{f,g}&:=\int_\Oe f\of\varx g\of\varx\det J_\sigma\of\varx\,d\varx,\\
    \lbfext^\sigma\of{f,g}&:=\int_\Oe\left(J_\sigma\of{\vecs x}^{-T}\nabla f\of{\vecs x}\right)\cdot\left(J_\sigma\of{\vecs x}^{-T}\nabla g\of{\vecs x}\right)\det J_\sigma\of\varx\,d\varx.
  \end{align*}
\end{defn}


\begin{prob}
  \label{prob:weak_resonance}
  Find $\sol\in H^1\of\domain\setminus\{0\}$, $\omega\in\setC^+$, such that
  \begin{align}
    \label{eq:weak_resonance}
    \lbfint\of{\sol,\testf}+\lbfext^\sigma\of{\sol,\testf}=\omega^2\left(\mbfint\of{\sol,\testf}+\mbfext^\sigma\of{\sol,\testf}\right),
  \end{align}
  for all $\testf\in H^1\of\domain$.
\end{prob}
\begin{rmk}
  The weakly formulated Problem \ref{prob:weak_resonance} assumes homogeneous Neumann boundary conditions on $\partial\domain$. For Dirichlet or mixed boundary conditions the problem has to be adapted accordingly.
\end{rmk}
\subsection{Discrete formulation}
Our goal is to discretize Problem \ref{prob:weak_resonance}. To this end we pick $\mathcal N\in\setN$ and a family of functions
$\base_{\mathcal N}:=\left\{b_0,\ldots,b_{\mathcal N}\right\}\subset H^1\of\domain$ and define the discrete space $\X_\mathcal N$ by
$$\X_\mathcal N:=\mathrm{span}\of{\base_{\mathcal N}}\subset H^1\of\domain.$$
Defining the mass- and stiffness matrix by
\begin{equation}
  \label{eq:mass_stiffness}
  \vecs M:=(m_{i,j})_{i,j=0,\ldots,\mathcal N},\quad\vecs S:=(s_{i,j})_{i,j=0,\ldots,\mathcal N}
\end{equation}
and
\begin{equation}
  \label{eq:mass_stiffness1}
m_{i,j}=\mbfint\of{b_i,b_j}+\mbfext^\sigma\of{b_i,b_j},\quad s_{i,j}=\lbfint\of{b_i,b_j}+\lbfext^\sigma\of{b_i,b_j}
\end{equation}
respectively, we can formulate the discrete problem by
\begin{prob}
  \label{prob:disc}
  Find $(\omega,\vecs u)\in\setC^+\times\setC^{\mathcal N}\setminus\{0\}$, such that
  $$\vecs S \vecs u=\omega^2\vecs M\vecs u.$$
\end{prob}
The discrete Problem \ref{prob:disc} can be solved using standard eigenvalue solvers (see e.g. \cite{Saad}).
In the following our task will be to find a suitable basis $\base_{\mathcal N}$.

\section{The exterior problem}
\label{sec:tp_ext_disc}

In this section we will exploit the inherent structure of the exterior domain to find a simple way of discretizing it without having to mesh it explicitly.
To simplify the notation we will focus on the case $d=3$ only. 

\subsection{Exterior variational formulation}
For the remainder of this section we will assume that $\varphi:M\to\bdi$ is a diffeomorphism for some open set $M\subset\setR^2$.
For the case $\bdi=\{\varx\in\setR^3:\|\varx\|=R\}$ an example for $\varphi$ is given by the usual spherical coordinates
    \begin{align*}
      \varphi:\begin{cases}
        [0,2\pi)\times[0,\pi)&\to\bdi,\\
        (\theta,\phi)&\mapsto R\begin{pmatrix}\sin\of\theta\cos\of\phi\\\sin\of\theta\sin\of\phi\\\cos\of\theta\end{pmatrix}.
      \end{cases}
    \end{align*}
\begin{lem}
  \label{lem:trafo}
  We can calculate the Jacobian of the coordinate transformation 
  \begin{align*}
    \Psi_\varphi:\begin{cases}
      \setR_{\geq 0}\times M&\to\Oe\cup\bdi,\\
      (\radvar,\eta)&\mapsto \left(1+\frac{\radvar}{R}\right)\varphi\of\eta,
    \end{cases}
  \end{align*}
    its inverse, and its determinant by
  \begin{align*}
    D\Psi_\varphi\of{\radvar,\eta}&=\left(\frac{1}{R}\varphi\of\eta,\left(1+\frac{\radvar}{R}\right)D\varphi\of\eta\right),\\
    \left(D\Psi_\varphi\of{\radvar,\eta}\right)^{-1}&=\twovector{\frac{1}{R}\varphi\of\eta^T}{\frac{1}{1+\frac{\radvar}{R}}D\varphi\of\eta^\dagger},\\
    |\det D\Psi_\varphi\of{\radvar,\eta}|&=\left(1+\frac{\radvar}{R}\right)^2\sqrt{\left|\det\left( D\varphi\of\eta^TD\varphi\of\eta\right)\right|},
  \end{align*}
  where $A^\dagger:=\left(A^TA\right)^{-1}A^T$ is the pseudo inverse of a matrix $A\in\setC^{3\times 2}$ with full rank.
\end{lem}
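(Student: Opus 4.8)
The plan is to verify the three formulas directly by the chain rule, treating $\Psi_\varphi$ as a composition of the product parametrization with the radial dilation, and then handle the pseudo-inverse identity by exploiting the orthogonality between the radial direction $\varphi(\eta)$ and the tangential directions spanned by the columns of $D\varphi(\eta)$.

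First I would compute $D\Psi_\varphi$. Writing $\Psi_\varphi(\radvar,\eta) = \varphi(\eta) + \frac{\radvar}{R}\varphi(\eta)$, the partial derivative with respect to $\radvar$ is $\frac{1}{R}\varphi(\eta)$, and the partial derivatives with respect to the two components of $\eta$ give the columns of $\left(1+\frac{\radvar}{R}\right)D\varphi(\eta)$; stacking these as a $3\times 3$ matrix yields the stated block form $D\Psi_\varphi(\radvar,\eta)=\left(\frac{1}{R}\varphi(\eta),\left(1+\frac{\radvar}{R}\right)D\varphi(\eta)\right)$. For the determinant, I would use the Gram-determinant / Cauchy--Binet style identity $|\det A|=\sqrt{|\det(A^TA)|}$ for square $A$, or alternatively observe that $\varphi(\eta)$ is normal to $\bdi$ (since $\|\varphi(\eta)\|=R$ is constant, so $\varphi(\eta)^T D\varphi(\eta)=0$) while the columns of $D\varphi(\eta)$ are tangential. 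Hence the $3\times 3$ matrix has mutually ``$A^TA$-block-diagonal'' structure: $D\Psi_\varphi^T D\Psi_\varphi = \diag\!\left(\frac{1}{R^2}\|\varphi(\eta)\|^2,\ \left(1+\frac{\radvar}{R}\right)^2 D\varphi(\eta)^T D\varphi(\eta)\right)$ up to the off-diagonal blocks vanishing by orthogonality, and since $\|\varphi(\eta)\|=R$ the first entry is $1$. Taking square roots of the determinant gives $|\det D\Psi_\varphi(\radvar,\eta)| = \left(1+\frac{\radvar}{R}\right)^2\sqrt{|\det(D\varphi(\eta)^T D\varphi(\eta))|}$.

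For the inverse, I would verify the claimed formula by direct multiplication rather than by an abstract inversion argument. Set $B:=\left(D\Psi_\varphi(\radvar,\eta)\right)^{-1}$ as proposed, i.e. the $3\times 3$ matrix whose first row is $\frac{1}{R}\varphi(\eta)^T$ and whose remaining two rows form $\frac{1}{1+\radvar/R}D\varphi(\eta)^\dagger$. Multiplying $B\cdot D\Psi_\varphi(\radvar,\eta)$ and using (a) $\frac{1}{R}\varphi(\eta)^T\cdot\frac{1}{R}\varphi(\eta) = \frac{1}{R^2}\|\varphi(\eta)\|^2 = 1$, (b) $\varphi(\eta)^T D\varphi(\eta)=0$ (normal $\perp$ tangent), (c) $D\varphi(\eta)^\dagger\varphi(\eta)=0$ since $\varphi(\eta)$ lies in the orthogonal complement of the column space of $D\varphi(\eta)$ and $D\varphi(\eta)^\dagger$ annihilates that complement, and (d) $D\varphi(\eta)^\dagger D\varphi(\eta)=\id_{2\times 2}$ because $D\varphi(\eta)$ has full column rank — all the scalar prefactors $\frac{1}{1+\radvar/R}$ and $\left(1+\frac{\radvar}{R}\right)$ cancel, giving $B\cdot D\Psi_\varphi(\radvar,\eta) = \id_{3\times 3}$. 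Since $D\Psi_\varphi$ is square, this one-sided identity suffices.

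I do not expect a serious obstacle here; the statement is essentially bookkeeping. The one point that requires a little care is the orthogonality facts (b) and (c): (b) follows by differentiating $\|\varphi(\eta)\|^2\equiv R^2$, and (c) follows because $D\varphi(\eta)^\dagger = (D\varphi(\eta)^T D\varphi(\eta))^{-1}D\varphi(\eta)^T$ and $D\varphi(\eta)^T\varphi(\eta)=0$ is just the transpose of (b). Everything else is the chain rule and cancellation of scalar factors, so the ``hard part'' is merely to present the block computation cleanly.
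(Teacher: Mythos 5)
Your proof is correct and follows essentially the same route as the paper: the Jacobian by direct differentiation, the inverse verified by block multiplication using $\varphi\of\eta^T D\varphi\of\eta=0$, $D\varphi\of\eta^\dagger D\varphi\of\eta=\id$ and $\|\varphi\of\eta\|=R$, and the determinant via the Gram matrix $D\Psi_\varphi^T D\Psi_\varphi$. You are merely a bit more explicit than the paper in spelling out the auxiliary facts (e.g. $D\varphi\of\eta^\dagger\varphi\of\eta=0$ and the sufficiency of a one-sided inverse), which is fine.
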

\begin{proof}
The Jacobian can be obtained by straightforward differentiation. 
Its inverse can be easily verified using the facts that $\varphi\of\eta^TD\varphi\of\eta=0$ and $D\varphi\of\eta^\dagger D\varphi\of\eta = I$.

  For obtaining the determinant we calculate
  \begin{align*}
    \det\of{ D\Psi_\varphi^T D\Psi_\varphi}&=\det\of{\twovector{\frac{1}{R}\varphi\of\eta^T}{\left(1+\frac{\radvar}{R}\right)D\varphi\of\eta^T}\left(\frac{1}{R}\varphi\of\eta,\left(1+\frac{\radvar}{R}\right)D\varphi\of\eta\right)}\\
    &=\det\twomatrix{1}{0}{0}{\left(1+\frac{\radvar}{R}\right)^2D\varphi\of\eta^T D\varphi\of\eta}\\
    &=\left(1+\frac{\radvar}{R}\right)^4\det\of{{D\varphi\of\eta^T D\varphi\of\eta}}.
  \end{align*}
  By taking the square root we obtain the desired result.
\end{proof}
\begin{defn}
  Let $M\subset\setR^2$ and $\varphi:M\to\bdi$ be a diffeomorphism. Then we define the surface gradient on of a function $f:\bdi\to\setC$ by
  $$\hat \nabla f\of{\varphi\of\eta}:=\left(D\varphi\of\eta^\dagger\right)^T\nabla_\eta\of{f\circ\varphi}\of\eta.$$
\end{defn}
  It can be shown that the surface gradient $\hat\nabla$ defined above is independent of the specific embedding $\varphi$.
\begin{thm}
  \label{thm:weak_polar}
  Let $f,g\in H^1\of\Oe$ and $\breve f\of{\radvar,\bdvar}:=f\of{\varx\of{\radvar,\bdvar}}$, $\breve g\of{\radvar,\bdvar}:=g\of{\varx\of{\radvar,\bdvar}}$.
Then the exterior bilinear forms from Definition \ref{defn:int_ext_bf} can be rewritten in the coordinates $\radvar,\bdvar$ by
\begin{align*}
  m^\sigma_{\mathrm{ext}}\left(f,g\right)&=\sigma\int_{\setR_{\geq 0}\times\bdi}\!\!\!\!\!\! \breve f\of{\radvar,\bdvar}\breve g\of{\radvar,\bdvar}\left(1+\frac{\sigma\radvar}{R}\right)^{2}\,d(\radvar,\bdvar),\\
  s^\sigma_{\mathrm{ext}}\left(f,g\right)&=\frac{1}{\sigma}\int_{\setR_{\geq 0}\times\bdi} \pardiffby{\breve f}{\radvar}\of{\radvar,\bdvar}\pardiffby{\breve g}{\radvar}\of{\radvar,\bdvar}\left(1+\frac{\sigma\radvar}{R}\right)^{2}\,d(\radvar,\bdvar),\\
  &+\sigma\int_{\setR_{\geq 0}\times\bdi} \hat\nabla \breve f\of{\radvar,\bdvar}\hat\nabla \breve g\of{\radvar,\bdvar}\,d(\radvar,\bdvar),
\end{align*}  
where integration over $\bdi$ of a function $h:\bdi\to\setC$ means integration by the surface measure i.e. 
$$\int_{\varphi\of M}h\of\bdvar\,d\bdvar:=\int_M h\of{\varphi\of\eta}\sqrt{\left|\det\of{D\varphi\of\eta^T D\varphi\of\eta}\right|}\,d\eta.$$
\end{thm}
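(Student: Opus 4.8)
The plan is to compute $m^\sigma_{\mathrm{ext}}$ and $s^\sigma_{\mathrm{ext}}$ directly by pulling the integrals defining them back to the reference cylinder $\setR_{\geq 0}\times M$ via the map $\Psi_\varphi$ of Lemma \ref{lem:trafo}, and then recognizing the result as an integral over $\setR_{\geq 0}\times\bdi$ using the stated surface-measure convention. The key input is that the radial complex scaling $\gamma$ of \eqref{eq:scaling} acts, in the coordinates $(\radvar,\bdvar)$, simply as $\radvar\mapsto\sigma\radvar$ (leaving $\bdvar$ fixed), so the Jacobian $J_\sigma$ expressed in these coordinates is block diagonal: a factor $\sigma$ in the radial direction and the identity in the angular directions. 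More precisely, if we write $\varx\of{\radvar,\bdvar}$ and then apply $\gamma$, the chain rule through $\Psi_\varphi$ gives $D(\gamma\circ\Psi_\varphi) = D\Psi_{\varphi}\big|_{(\sigma\radvar,\eta)}\cdot\diag{\sigma,1,1}$, so that the change of variables in the exterior integrals produces (i) the Euclidean volume element $|\det D\Psi_\varphi\of{\sigma\radvar,\eta}|\,\sigma\,d(\radvar,\eta)$ for the scaled domain, which by Lemma \ref{lem:trafo} equals $\sigma\left(1+\frac{\sigma\radvar}{R}\right)^2\sqrt{|\det(D\varphi^TD\varphi)|}\,d(\radvar,\eta)$, and (ii) the correspondingly transformed gradient.

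For the mass form $m^\sigma_{\mathrm{ext}}\of{f,g}=\int_\Oe f g\,\det J_\sigma\,d\varx$, there are no gradients, so after substituting $\varx = \Psi_\varphi(\radvar,\eta)$ and collecting the Jacobian factors one obtains $\sigma\int_{\setR_{\geq 0}\times M}\breve f\of{\radvar,\varphi\of\eta}\breve g\of{\radvar,\varphi\of\eta}\left(1+\frac{\sigma\radvar}{R}\right)^2\sqrt{|\det(D\varphi\of\eta^TD\varphi\of\eta)|}\,d(\radvar,\eta)$, which is exactly the claimed expression once the surface-measure shorthand is invoked. For the stiffness form I would split $\nabla$ into its radial and surface parts. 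Using the inverse Jacobian from Lemma \ref{lem:trafo}, the chain rule gives $\nabla(f\circ\Psi_\varphi^{-1})$ in terms of $\pardiffby{\breve f}{\radvar}$ (contracted against $\frac1R\varphi\of\eta$, the unit radial direction since $\|\varphi\of\eta\|=R$) and the angular derivatives (contracted against $\frac{1}{1+\radvar/R}D\varphi\of\eta^\dagger$); because $\varphi\of\eta^TD\varphi\of\eta=0$ these two pieces are orthogonal. Applying $J_\sigma^{-T}$ in the same coordinates multiplies the radial component by $1/\sigma$ and leaves the angular component unchanged except for the $\left(1+\frac{\sigma\radvar}{R}\right)$ that now comes from evaluating at the scaled radius; combined with the volume element $\sigma\left(1+\frac{\sigma\radvar}{R}\right)^2\sqrt{|\det(D\varphi^TD\varphi)|}$ one gets a net factor $\frac{1}{\sigma}\left(1+\frac{\sigma\radvar}{R}\right)^2$ on the radial-derivative term and a net factor $\sigma$ on the angular term, where in the latter the two powers of $\left(1+\frac{\sigma\radvar}{R}\right)^{-1}$ from the inverse Jacobian cancel two of the powers in the volume element. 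Recognizing $\left(D\varphi\of\eta^\dagger\right)^T\nabla_\eta(\breve f\circ\varphi)$ as the surface gradient $\hat\nabla\breve f$ from the definition preceding the theorem, and once more folding $\sqrt{|\det(D\varphi^TD\varphi)|}\,d(\radvar,\eta)$ into $d(\radvar,\bdvar)$, yields the two stated terms.

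I expect the main obstacle to be the careful bookkeeping of the powers of $\left(1+\frac{\sigma\radvar}{R}\right)$: one must track precisely which factors come from the volume element (two powers of $1+\radvar/R$, here evaluated at the scaled radius so $1+\sigma\radvar/R$), which come from the inverse Jacobian of $\Psi_\varphi$ acting on the angular derivatives (two negative powers), and which from the extra $\sigma$'s introduced by the linear scaling in the radial slot — making sure nothing is double counted and that the angular term ends up with no remaining $\radvar$-dependent prefactor while the radial term retains exactly $\left(1+\frac{\sigma\radvar}{R}\right)^2$. A secondary subtlety is justifying the interchange of the (complex, $\sigma$-dependent) change of variables with the real integration, i.e. that the holomorphic substitution $\radvar\mapsto\sigma\radvar$ is legitimate here; this is standard given the exponential decay established in Section \ref{sec:complex_scaling} and the analyticity of outgoing solutions, so I would only remark on it rather than belabor it. Orthogonality of the radial and tangential directions ($\varphi\of\eta^TD\varphi\of\eta=0$), already noted in the proof of Lemma \ref{lem:trafo}, is what makes the cross terms in $s^\sigma_{\mathrm{ext}}$ vanish and is worth stating explicitly at the point where the gradient is decomposed.
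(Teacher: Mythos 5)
Your proposal is correct and follows essentially the same route as the paper's proof: pull the integrals back through $\Psi_\varphi$, use $\gamma\circ\Psi_\varphi(\radvar,\eta)=\Psi_\varphi(\sigma\radvar,\eta)$ so that $D(\gamma\circ\Psi_\varphi)$ factors as $D\Psi_\varphi(\sigma\radvar,\eta)$ times a block-diagonal matrix with entries $\sigma$ and $\id_2$, invoke the determinant and inverse from Lemma \ref{lem:trafo}, and decompose the gradient into a radial part along $\frac1R\varphi(\eta)$ and a tangential part giving $\hat\nabla$, with $\varphi(\eta)^T D\varphi(\eta)=0$ killing the cross terms. Two small touch-ups: the determinant factor must be kept as the complex analytic continuation $(1+\frac{\sigma\radvar}{R})^2\sqrt{|\det(D\varphi^TD\varphi)|}$ rather than an absolute value, and your closing worry about justifying a holomorphic substitution is unnecessary here --- the change of variables is the real diffeomorphism $\Psi_\varphi$ applied to arbitrary $f,g\in H^1(\Oe)$, with $\sigma$ entering only through the coefficient matrix $J_\sigma$, so no analyticity of $f,g$ is used.
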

\begin{proof}
  Using the determinant calculated in Lemma \ref{lem:trafo} and the fact that
  $$\gamma\of{\Psi_\varphi\of{\radvar,\eta}}=\Psi_\varphi\of{\sigma\radvar,\eta},$$
  we obtain
  \begin{align*}
    D\of{\gamma\circ\Psi_\varphi}\of{\radvar,\eta}&=D\Psi_\varphi\of{\sigma\radvar,\eta}\twomatrix{\sigma}{0}{0}{\id_2},\\
    \left(D\of{\gamma\circ\Psi_\varphi}\of{\radvar,\eta}\right)^{-1}&=\twomatrix{\frac{1}{\sigma}}{0}{0}{\id_2}\left(D\Psi_\varphi\of{\sigma\radvar,\eta}\right)^{-1},\\
    \det D\of{\gamma\circ\Psi_\varphi}\of{\radvar,\eta}&=\sigma\det D\Psi_\varphi\of{\sigma\radvar,\eta}.
  \end{align*}
After applying the transformation rule we immediately obtain the formula for $\mbfext^\sigma$.

For the formula for $\lbfext^\sigma$ we calculate
\begin{align*}
  \nabla f\of{\Psi_\varphi\of{\radvar,\eta}}&=\left(D\of{\gamma\circ\Psi_\varphi}\of{\radvar,\eta}\right)^{-T}\nabla_{\radvar,\eta}\left(f\circ\Psi_\varphi\right)\of{\radvar,\eta}\\
  &=\left(\frac{1}{\sigma R}\varphi\of\eta,\frac{1}{1+\frac{\radvar\sigma}{R}}\left(D\varphi\of\eta^\dagger\right)^T\right)\twovector{\pardiffby{f\circ\Psi_\varphi}{\radvar}\of{\radvar,\eta}}{\nabla_\eta\of{f\circ\Psi_\varphi}\of{\radvar,\eta}}\\
  &=\frac{1}{\sigma R}\varphi\of\eta\pardiffby{\breve f}{\radvar}\of{\radvar,\varphi\of\eta}+\frac{1}{1+\frac{\sigma\radvar}{R}}\hat\nabla\breve f\of{\radvar,\varphi\of\eta}.
\end{align*}
Plugging this into the integral and applying the transformation rule leads to the desired result.
\end{proof}
\subsection{Tensor product discretization of the exterior problem}
Let
\begin{align*}
\radbase_N&:=\{\phi_n:n=0,\ldots,N\}\subset H^1\of{\setR_{\geq 0}},\\
\bdbase_M&:=\{b_j:j=0,\ldots,M\}\subset H^1\of\bdi,
\end{align*}
be families of linearly independent functions.
Then we define
discrete spaces on $\setR_{\geq 0}$ and $\bdi$ respectively by
$$\radmod\X_N:=\mathrm{span}\of{\radbase_N}\subset H^1\of{\setR_{\geq 0}},$$
and 
$$\bdmod\X_M:=\mathrm{span}\of{\bdbase_M}\subset H^1\of\bdi.$$
To discretize the exterior problem, we use a tensor product space of the form
$$\radmod\X_N\otimes\bdmod\X_M:=\mathrm{span}\{\phi\otimes b:(\radvar,\bdvar)\mapsto\phi\of\radvar b\of\bdvar:\phi\in\radbase_N,b\in\bdbase_M\}.$$

To obtain the entries of the mass and stiffness matrix defined in (\ref{eq:mass_stiffness}) and (\ref{eq:mass_stiffness1}), we need to evaluate the exterior bilinear forms for all pairs of basis functions. Since our basis functions are composed of a radial and a tangential part, we can decompose the bilinear forms accordingly and obtain for $\tilde f,\tilde g\in H^1\of{\setR_{\geq 0}}$ and $\hat f,\hat g \in H^1\of\bdi$
\begin{align*}
  \mbfext^\sigma\left(\tilde f\otimes\hat f,\tilde g\otimes\hat g\right)&=\mbfrad^\sigma_1\of{\tilde f,\tilde  g}\mbfbd\of{\hat f ,\hat g},\\
  \lbfext^\sigma\left(\tilde f\otimes\hat f,\tilde g\otimes\hat g\right)&=\lbfrad^\sigma\of{\tilde f,\tilde g}\mbfbd\of{\hat f,\hat g}+\mbfrad_0^\sigma\of{\tilde f,\tilde g}\lbfbd\of{\hat f,\hat g},
\end{align*}
with
\begin{align*}
  \mbfrad^\sigma_0\of{\tilde f,\tilde g}&=\sigma\int_0^\infty \tilde f\of\radvar\tilde g\of\radvar\,d\radvar,\\
  \mbfrad_1^\sigma\of{\tilde f,\tilde g}&=\sigma\int_0^\infty\left(1+\frac{\sigma\radvar}{R}\right)^2 \tilde f\of\radvar\tilde g\of\radvar\,d\radvar,\\
  \lbfrad^\sigma\of{\tilde f,\tilde g}&=\frac{1}{\sigma}\int_0^\infty\left(1+\frac{\sigma\radvar}{R}\right)^2 \tilde f'\of\radvar\tilde g'\of\radvar\,d\radvar,\\
  \mbfbd\of{\hat f,\hat g}&=\int_\bdi \hat f\of\bdvar\hat g\of\bdvar\,d\bdvar,\\
  \lbfbd\of{\hat f,\hat g}&=\int_\bdi \bdmod\nabla\hat f\of\bdvar\cdot\bdmod\nabla\hat g\of\bdvar\,d\bdvar.
\end{align*}
A usual perfectly matched layer (PML) approach in this tensor product setting would be to truncate the set $\setR_{\geq 0}$ to some finite interval $[0,T]$ for $T>0$ and to use 
$$\radmod\X_N\subset H_0^1\of{[0,T]}:=\{f\in H^1\of{\setR_{\geq 0}}:f(x)=0, x\geq T\},$$
where $\radmod\X_N$ is a finite element space.
Differing from this approach, we will choose basis functions with infinite support to omit truncation and ensure faster convergence. 
Our requirements to the basis functions $\phi_n$ and the discrete space $\radmod\X_N$ are:
\begin{enumerate}[(R 1)]
  \item{The basis functions $\phi_n$ should be easy to evaluate numerically stable,}
    \label{enum:rfirst}
    \label{enum:stable}
  \item{the radial part of the solution should be well approximated by functions from $\tilde\X_N$,}
    \label{enum:approx}
  \item{it should be easy to couple the interior to the exterior problem,}
    \label{enum:coupling}
  \item{the integrals $\int_{\setR_{\geq 0}}p\of\radvar\phi_n\of\radvar\phi_j\of\radvar\,d\radvar$ and$\int_{\setR_{\geq 0}}p\of\radvar\phi_n'\of\radvar\phi_j'\of\radvar\,d\radvar$  should be easy to compute (numerically), for polynomials $p$,}
    \label{enum:numeric}
  \item{the discretization matrices should be sparse, and}
    \label{enum:sparse}
  \item{the condition numbers of the discretization matrices should behave well for large values of $N$.}
    \label{enum:cond}
    \label{enum:rlast}
\end{enumerate}

\section{Infinite elements based on complex scaling}
\label{sec:infinite_elements}
Complex scaling leads to anisotropic solutions. In the interior domain as well as in the tangential direction of the exterior domain the oscillating behaviour of the function dominates. In radial direction of the exterior domain the exponential decay is crucial. Therefore, in order to reduce computational costs it is natural to choose suitable basis functions for the different parts of the solution.

\subsection{Interior and interface discretization}
For discretizing the interior problem basically any discrete space $\Xint=\mathrm{span}\{b_j:j=0,\ldots,L\}\subset H^1\of{\Oi}$ such that $\Xint|_\bdi:=\{f|_\bdi:f\in\Xint\}\subset H^1\of\bdi$ can be used. The trace space of this interior discrete space is then used for the interface discretization (cf. Section \ref{sec:tp_ext_disc}), i.e.
$$\bdmod\X_M:=\Xint|_\bdi=\mathrm{span}\{b_j|_\bdi:j=0,\ldots,L\}\subset H^1\of\bdi.$$
  In our examples we will choose $\Xint$ as a standard high order conforming finite element space.
  Since in this case all of the basis functions corresponding to inner nodes in $\Oi$ will be zero on the interface $\bdi$, we expect the dimension of $\bdmod\X_M$ to be much smaller than the dimension of $\Xint$.
%

\subsection{Radial discretization}
For the radial discretization we use the space of generalized Laguerre functions. These functions are used as basis functions of spectral methods for equations on unbounded domains with exponentially decreasing solutions (cf. \cite[Section 7.4]{ShenTangWang}). We will see in the following, that they are a suitable choice considering our requirements (R \ref{enum:rfirst})-(R \ref{enum:rlast}).
  Following \cite[Section 7.1]{ShenTangWang}, we define the generalized Laguerre polynomials as follows:
\begin{defn}
  \label{defn:laguerre}
  For $n\in\setN_0$ and $m\in\setZ$, we define the generalized Laguerre polynomials by
  \begin{align*}
    L_{n,m}(x):=\sum_{k=0}^n\begin{pmatrix}n+m\\n-k\end{pmatrix}\frac{(-x)^k}{k!}.
  \end{align*}
  Further we define the generalized Laguerre functions by
  \begin{align}
    \phi_{n,m}(x):=\exp\of{-x}L_{n,m}(2x).
  \end{align}
  We will shorten the notation by writing $\phi_n:=\phi_{n,0}$ and $L_n:=L_{n,0}$.
  Moreover we define the radial discrete space by
  $$\radmod\X_N:=\mathrm{span}\left\{\phi_n:n=0,\ldots,N\right\}.$$
\end{defn}
We proceed to study whether the basis functions defined in Definition \ref{defn:laguerre} satisfy our requirements (R \ref{enum:rfirst})-(R \ref{enum:rlast}). To this end, we state a few properties of the generalized Laguerre functions.
\begin{lem}[properties of the generalized Laguerre functions]
  \label{lem:prop_laguerre}
  
  \begin{enumerate}[(i)]
 \leavevmode
  \item{For $k,n,j\in\setN_0$, $m\in\setZ$ the functions $\phi_{n,m}\in H^j\of{\setR_{\geq 0}}$ and $$\left(\phi_n,\phi_k\right)_{L_2\of{\setR_{\geq 0}}}=\frac{1}{2}\delta_{n,k}.$$ The functions $\left\{\frac{1}{\sqrt 2}\phi_k,k\in\setN_0\right\}$ form a complete orthonormal system of $L_2\of{\setR_{\geq 0}}$.}
    \label{enum:laguerre_ortho}
  \item{For $n\in\setN,m\in\setZ$,  $$\phi_{n,m-1}=\phi_{n,m}-\phi_{n-1,m}.$$}
    \label{enum:laguerre_indrec}
  \item{For $k\in\setN_0$, $x\in\setC$ $$\frac{d^k}{dx^k}\phi_{n,m}\of x=(-1)^k\phi_{n-k,m+k}\of x.$$}
    \label{enum:laguerre_diff}
  \item{For $n\in\setN_0,m\in\setZ$, $x\in\setC$ $$L_{n,m}(x)=\frac{\exp(x)}{x^{m}n!}\frac{d^n}{dx^n}\of{\exp(-x)x^{n+m}}.$$}
    \label{enum:laguerre_rodriguez}
  \item{For $n,k,l\in\setN$, $p\in\Pi_n$ and $|l-k|>n$ $$\left(p\phi_l,\phi_k\right)_{L_2\of{\setR_{\geq 0}}}=0.$$}
    \label{enum:laguerre_sparse}
  \item{For $m\in\setZ$, $N\in\setN$, $$\radmod\X_N=\mathrm{span}\left\{\phi_{n,m}:n=0,\ldots,N\right\}$$}
    \label{enum:laguerre_space}
  \item{For $j\in\setN_0$ $$\quad \phi_j(0)=\delta_{0,j}.$$}
    \label{enum:laguerre_coupling}
  \item{For $k\in\setN_{\geq 2}$, $x\in\setC$ \begin{equation}\label{eq:laguerre_rec}k\phi_k(x)=(2k-1-2x)\phi_{k-1}(x)-(k-1)\phi_{k-2}(x).\end{equation}}
    \label{enum:laguerre_rec}
    \item{For $t,x\in\setC$, $|t|<1$ 
        $$\sum_{k=0}^\infty L_k(x)t^k=\frac{\exp\of{-\frac{tx}{1-t}}}{1-t}.$$}
    \label{enum:laguerre_gen}
  \end{enumerate}
\end{lem}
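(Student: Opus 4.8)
The plan is to establish the listed properties largely by reducing everything to the two classical generating identities (ix) and to the Rodrigues formula (iv); once those are in hand the rest follows by elementary manipulation. First I would prove (ix): the generating function $\sum_k L_k(x)t^k=(1-t)^{-1}\exp(-tx/(1-t))$ is the standard Laguerre generating function, which one verifies either by expanding the right-hand side as a power series in $t$ and matching coefficients with the explicit sum in Definition \ref{defn:laguerre}, or by checking that both sides satisfy the same first-order ODE in $t$ with the same initial value at $t=0$. Next I would prove the Rodrigues formula (iv) by induction on $n$, using Leibniz's rule to differentiate $\exp(-x)x^{n+m}$; this is the most computational of the ``core'' steps but is entirely routine. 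Property (ii), the index recursion $\phi_{n,m-1}=\phi_{n,m}-\phi_{n-1,m}$, reduces after stripping the common factor $\exp(-x)$ to the identity $L_{n,m-1}(2x)=L_{n,m}(2x)-L_{n-1,m}(2x)$, which is immediate from the binomial identity $\binom{n+m-1}{n-k}=\binom{n+m}{n-k}-\binom{n+m-1}{n-1-k}$ applied termwise to the defining sum.

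For the orthogonality statement (i), I would first note $\phi_{n,m}\in H^j(\setR_{\geq 0})$ for every $j$ because $\phi_{n,m}$ is a polynomial times $\exp(-x)$, so all derivatives are square integrable on $\setR_{\geq 0}$. For the $L_2$ inner product $(\phi_n,\phi_k)=\int_0^\infty \exp(-2x)L_n(2x)L_k(2x)\,dx$, substituting $y=2x$ turns this into $\tfrac12\int_0^\infty \exp(-y)L_n(y)L_k(y)\,dy$, which is the classical weighted orthogonality of the (ordinary) Laguerre polynomials with weight $\exp(-y)$, equal to $\tfrac12\delta_{n,k}$; I would cite \cite{ShenTangWang} or prove it from (iv) by integration by parts $n$ times. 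Completeness of $\{\tfrac{1}{\sqrt2}\phi_k\}$ in $L_2(\setR_{\geq 0})$ follows from density of polynomials in the weighted $L_2$ space, which is the standard fact that the Laguerre functions form a basis; again I would cite \cite[Section 7.1]{ShenTangWang}. Property (iii), $\frac{d^k}{dx^k}\phi_{n,m}=(-1)^k\phi_{n-k,m+k}$, I would prove for $k=1$ by a direct computation using $L_{n,m}'=-L_{n-1,m+1}$ (a known derivative identity for generalized Laguerre polynomials, provable from the defining sum) together with the product rule applied to $\exp(-x)L_{n,m}(2x)$, and then iterate. Property (vi) is then an immediate corollary: by (ii) each $\phi_{n,m}$ is an integer combination of $\phi_{0,0},\dots,\phi_{n,0}$ and conversely, so the spans agree; (vii) follows since $\phi_j(0)=L_j(0)=\binom{j}{j}=1$... wait, more precisely $L_j(0)=\sum_k \binom{j}{j-k}\delta_{k,0}=\binom{j}{j}=1$ for all $j$, so I would instead get this from the definition $\phi_j(0)=L_j(0)$ and observe $L_{j}(0)=1$; the claim as stated, $\phi_j(0)=\delta_{0,j}$, must therefore refer to a different normalization, so I would re-examine: in fact the intended basis functions for coupling are likely $\phi_j-\phi_{j-1}$ type combinations, and I would prove the stated coupling property for whichever explicit basis the paper uses, noting $L_{n,-1}(0)=\delta_{n,0}$ which does vanish for $n\geq1$ — so (vii) as written is really the statement about $\phi_{j,-1}$, consistent with (ii). I would clarify this in the writeup.

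The recurrence (viii), $k\phi_k=(2k-1-2x)\phi_{k-1}-(k-1)\phi_{k-2}$, comes from the classical three-term recurrence for Laguerre polynomials $kL_k(y)=(2k-1-y)L_{k-1}(y)-(k-1)L_{k-2}(y)$ by setting $y=2x$ and multiplying by $\exp(-x)$; I would derive the polynomial recurrence by differentiating the generating function (ix) with respect to $t$ and matching coefficients, which is the cleanest route. Finally, the sparsity property (v) — that $(p\phi_l,\phi_k)_{L_2}=0$ whenever $p\in\Pi_n$ and $|l-k|>n$ — is the step I expect to be the main obstacle, since it is the one genuinely new-looking assertion. The idea is: $p\phi_l$ with $p$ of degree $n$ can be written, using the three-term recurrence (viii) iteratively (equivalently, using the fact that multiplication by $x$ shifts the Laguerre index by at most one, since $x L_l$ is a combination of $L_{l-1},L_l,L_{l+1}$), as a linear combination of $\phi_{l-n},\dots,\phi_{l+n}$; applying this $n$ times to the monomials building up $p$ shows $p\phi_l\in\mathrm{span}\{\phi_j:|j-l|\le n\}$. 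Then orthogonality (i) kills the inner product with $\phi_k$ as soon as $|k-l|>n$. The only care needed is boundary terms at the lower index (when $l-n<0$), which only shrinks the support of indices and does not hurt the conclusion. I would write this step as a short lemma: ``$x^n\phi_l$ is an $L_2$-combination of $\phi_{l-n},\dots,\phi_{l+n}$,'' proved by induction on $n$ from the one-step identity $x\phi_l = \tfrac12\big((2l+1)\phi_l - (l+1)\phi_{l+1} - l\,\phi_{l-1}\big)$ (a rearrangement of (viii)), and then conclude (v) immediately.
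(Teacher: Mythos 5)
The paper's own ``proof'' of this lemma is a one-line citation to \cite{Abramowitz}, so your explicit sketch is by construction a more detailed route; for items (i), (ii), (iv), (v), (vi), (viii) and (ix) it is correct and standard. In particular your argument for the sparsity property (v) --- rearranging (viii) to $x\phi_l=\tfrac12\left((2l+1)\phi_l-(l+1)\phi_{l+1}-l\phi_{l-1}\right)$, inducting to get $p\phi_l\in\mathrm{span}\{\phi_j:|j-l|\le n\}$ for $p\in\Pi_n$, and then invoking orthogonality --- is exactly the right idea. You also correctly spotted that (vii) cannot hold for $\phi_j=\phi_{j,0}$ as defined (since $\phi_j(0)=L_j(0)=1$ for every $j$) and that the intended statement is $\phi_{j,-1}(0)=\delta_{0,j}$, which is consistent with (ii); flagging and resolving that discrepancy is a genuine contribution, as the paper leaves it unremarked.

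There is, however, a gap at item (iii): the identity as stated for the \emph{functions} $\phi_{n,m}$ is false, and the computation you propose would not close. With $\phi_{n,m}(x)=\exp(-x)L_{n,m}(2x)$, the product and chain rules together with $L_{n,m}'=-L_{n-1,m+1}$ and (ii) give $\frac{d}{dx}\phi_{n,m}(x)=-\exp(-x)\left(L_{n,m}(2x)+2L_{n-1,m+1}(2x)\right)=-\left(\phi_{n,m+1}(x)+\phi_{n-1,m+1}(x)\right)$, not $-\phi_{n-1,m+1}(x)$; concretely, for $n=1$, $m=0$ one has $\frac{d}{dx}\phi_{1,0}(x)=\exp(-x)(2x-3)$ whereas $-\phi_{0,1}(x)=-\exp(-x)$. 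The true (and classical) statement --- and the one the paper actually uses later, in the proof of Theorem \ref{thm:1d_approx}, where derivatives $L_n^{(k)}(0)$ are rewritten as $(-1)^kL_{n-k,k}(0)$ --- is the polynomial identity $\frac{d^k}{dx^k}L_{n,m}(x)=(-1)^kL_{n-k,m+k}(x)$. So item (iii) needs the same treatment you gave (vii): note that the statement must be read for $L_{n,m}$ rather than $\phi_{n,m}$ (or replaced by the corrected formula above), since as written your ``product rule, then iterate'' step would fail when carried out.
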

\begin{proof}
  All of the statements are easily checked by the reader and can be found e.g. in \cite[Chapter~22]{Abramowitz}
\end{proof}
  Item (\ref{enum:laguerre_coupling}) of Lemma \ref{lem:prop_laguerre} shows, that only the first radial basis function has to be coupled to an interior basis function i.e. (R \ref{enum:coupling}) is fulfilled. Moreover items (\ref{enum:laguerre_ortho}) and (\ref{enum:laguerre_sparse}) together with (\ref{enum:laguerre_diff}) and (\ref{enum:laguerre_indrec}) tell us that the resulting matrices will be sparse ((R \ref{enum:sparse})).

\subsection{Coupling the interior and the exterior problem}
Since we want to create a conforming discrete space for the whole problem, we need to couple our interior and exterior discrete spaces in a manner such that the resulting space is equivalent to a subspace of $H^1\of\Omega$. We achieve this by using
$$\Y:=\left\{(u_{\mathrm{int}},u_{\mathrm{ext}}):u_{\mathrm{int}}\in\Xint,u_{\mathrm{ext}}\in\X_{\mathcal N}, u_\mathrm{int}|_\bdi = u_\mathrm{ext}(0,\cdot)\right\}.$$
With an embedding defined by
$$\iota:\begin{cases}
  \Y&\to H^1\of\domain,\\
  \iota\of{(u,v)}\of\varx&:=\begin{cases}u\of\varx,&\varx\in\Oi,\\ v\of{\radvar\of\varx,\bdvar\of\varx},&\varx\in\Oe,\end{cases}
\end{cases}$$
we have
$$\iota\of{\Y}\subset H^1\of\domain.$$

To obtain a basis of $\Y$ we have to couple the basis functions, such that the resulting functions are continuous. This can be done by identifying an interior basis function $b_j$ with non-vanishing trace on $\Gamma$ with the exterior basis function $\phi_0\otimes b_j|_\bdi$. Note, that due to the tensor product structure of the exterior space the parts of $\vecs S$ and $\vecs M$ that correspond to the exterior domain can be assembled by calculating the radial and interface part separately and tensorizing them appropriately.

\subsection{Stable evaluation and numerical integration}
The generalized Laguerre functions can be evaluated numerically stable by using the recursion given in Lemma \ref{lem:prop_laguerre}(\ref{enum:laguerre_rec}).
We use Gauss rules for $(0,\infty)$ with weighting function $\exp(-\cdot)$ to obtain exact quadrature rules for the Laguerre functions (see \cite[Chapter~7.1.2]{ShenTangWang}).

\begin{rmk}
  This enables us to also deal with inhomogeneous potentials in the exterior domain which is not possible in a straightforward way using classical Hardy Space infinite elements.
\end{rmk}

\subsection{Comparison to the Hardy Space infinite element method}
\label{sec:hsm}
The Hardy space infinite element method introduced in \cite{HohageNannen:09} uses the so called pole condition \cite{SD:95,Schmidt:02,PC1} as radiation condition. In its standard form, this pole condition is equivalent to the radiation condition underlying the complex scaling, which is equivalent to the radiation condition of Sec.~\ref{sec:radiation_condition} for certain domains of complex frequencies including positive frequencies (see \cite{PC2} or more explicitly for waveguides in \cite{HohageNannen:15,HallaNannen:18}).

The Hardy space infinite element method is a tensor product method as introduced in Sec.~\ref{sec:tp_ext_disc}. But since the pole condition characterizes radiating solutions of the Helmholtz equation by the poles or singularities of their Laplace transform, the discretization in the radial direction is done for the Laplace transformed function. The basis functions are elements of certain Hardy spaces such that they satisfy the pole condition. In order to use these basis functions, the Helmholtz equation has to be transformed into the Laplace domain leading to quite unusual variational formulations in unusual Hilbert spaces. Nevertheless, it is a pure Galerkin method.

For a comparison with the complex scaled infinite elements of this paper, Section 4.2 of \cite{HohageNannen:09} is of importance. In this section, the Hardy space variational formulation is related to a complex scaled variational formulation via a Fourier transform. If the isomorphism $\calQ$ defined there is applied to the Hardy space basis functions from \cite[Sec.~2.4]{HohageNannen:09}, we arrive at the generalized Laguerre functions of the preceding subsections. Hence, the discretization matrices of the Hardy space infinite element method are exactly the same as those of the complex scaled infinite elements.

For the Helmholtz equation with homogeneous exterior domain the complex scaled infinite element method is therefore exactly identical to the standard Hardy space infinite element method. Only the functional setting and the theoretical justification is different. There are two situations, where the two methods differ. If the exterior domain is inhomogeneous with coefficient functions depending on the radius, the Hardy space infinite element method is complicated to use due to the involved Laplace transform. Nevertheless, inhomogeneous exterior problems with dependencies only on the surface variable can be solved with the pole condition framework as well (see \cite{NannenSchaedle:09}). On the other hand, the two pole Hardy space method introduced in \cite{Hallaetal:16} uses a more complicated form of the pole condition, which is not equivalent to a standard complex scaling radiation condition. So e.g. for elastic waveguide problems with different signs of group and phase velocity, the Hardy space infinite element method of \cite{HallaNannen:15} cannot be reinterpreted directly as a complex scaled infinite element method.

For problems, where the two methods are essentially identical, the convergence results in \cite{HohageNannen:09,Halla:16} can be used for complex scaled infinite elements as well. Nevertheless, in the following section we present more detailed approximation results for the infinite element method, which have not been derived so far. They may help choosing appropriate method parameters in practice.

\section{Approximation error}
\label{sec:convergence}
In this paper we will not show the stability of the method, since this is already done in \cite{HohageNannen:09,Halla:16}. We merely focus on the best approximation error of this Galerkin method.
In \cite[Chapter~7.3]{ShenTangWang} it is shown that the error of interpolation by Laguerre functions decays super algebraically in the order of the Laguerre functions. Although this implies super algebraic convergence of our method,
it does not help us in choosing optimal parameters. Therefore, in this section we will derive estimates depending on the method parameters $\sigma$ and $R$ and the frequency $\omega$.

\subsection{Best approximation in one dimension}
\label{sec:BestAppr1d}
Before we discuss approximation results for the solutions in three dimensions we state some results 
regarding the simpler one dimensional problems.
\begin{thm}
  \label{thm:1d_approx}
  For $b\in\setC$, $\Re\of b>-1$ and $n\in\setN_0$,
  \begin{align*}
    \int_0^\infty\exp\of{-b\radvar}\phi_n(\radvar)\,d\radvar&=\frac{(b-1)^n}{(b+1)^{n+1}}.
  \end{align*}
  For $\Re\of b> 0$ and $\radvar\in\setR_{\geq 0}$, we have
  $$
  \exp\of{-b\radvar}=\frac{2}{b+1}\sum_{n=0}^\infty\left(\frac{b-1}{b+1}\right)^n\phi_n(\radvar).
  $$
  Moreover, the $L^2\of{\setR_{\geq 0}}$-orthogonal projection onto $\tilde \X_N$ of $\exp\of{-b\cdot}$ is given by
  $$
  \Pi_N \exp\of{-b\cdot}=\frac{2}{b+1}\sum_{n=0}^N\left(\frac{b-1}{b+1}\right)^n\phi_n\of\cdot.
  $$
\end{thm}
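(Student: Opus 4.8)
The plan is to compute the basic integral directly and then bootstrap it to the series expansion and the projection formula. For the first identity, I would use the generating function of the Laguerre polynomials from Lemma \ref{lem:prop_laguerre}(\ref{enum:laguerre_gen}). Writing $\phi_n(\radvar) = \exp(-\radvar)L_n(2\radvar)$, one has for $|t|<1$
\begin{align*}
  \sum_{n=0}^\infty t^n \int_0^\infty \exp(-b\radvar)\phi_n(\radvar)\,d\radvar
  &= \int_0^\infty \exp(-(b+1)\radvar)\sum_{n=0}^\infty L_n(2\radvar)t^n\,d\radvar\\
  &= \frac{1}{1-t}\int_0^\infty \exp\!\left(-(b+1)\radvar - \frac{2t\radvar}{1-t}\right)d\radvar.
\end{align*}
The remaining integral is elementary: it equals $\bigl((b+1) + \frac{2t}{1-t}\bigr)^{-1} = \frac{1-t}{(b+1)(1-t)+2t} = \frac{1-t}{(b+1)+(1-b)t}$. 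Hence the whole sum collapses to $\frac{1}{(b+1)+(1-b)t} = \frac{1}{b+1}\cdot\frac{1}{1-\frac{b-1}{b+1}t}$, and expanding the geometric series in $t$ and matching coefficients of $t^n$ gives $\frac{(b-1)^n}{(b+1)^{n+1}}$. The interchange of sum and integral is justified by absolute convergence: for $\Re(b)>-1$ and $|t|$ small enough the exponent has negative real part uniformly, so dominated convergence (or Fubini–Tonelli on the majorant) applies. I should note the identity then extends to all $|t|<1$, and in fact to the stated range $\Re(b)>-1$, by analyticity in $b$, since both sides are holomorphic there.

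For the series expansion of $\exp(-b\cdot)$, I would invoke Lemma \ref{lem:prop_laguerre}(\ref{enum:laguerre_ortho}): $\{\tfrac{1}{\sqrt2}\phi_k\}$ is a complete orthonormal system of $L_2(\setR_{\geq 0})$, so for $\Re(b)>0$ (which guarantees $\exp(-b\cdot)\in L_2(\setR_{\geq 0})$) we have the $L_2$-convergent expansion $\exp(-b\cdot) = \sum_{n=0}^\infty 2\,(\exp(-b\cdot),\phi_n)_{L_2}\,\phi_n$. The coefficient $(\exp(-b\cdot),\phi_n)_{L_2}$ is exactly the integral computed in the first part (the $\phi_n$ are real on $\setR_{\geq 0}$, so no conjugation issue), namely $\frac{(b-1)^n}{(b+1)^{n+1}}$, giving coefficient $\frac{2}{b+1}\bigl(\frac{b-1}{b+1}\bigr)^n$. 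Pointwise convergence on $\setR_{\geq 0}$ can be obtained by noting $\bigl|\frac{b-1}{b+1}\bigr|<1$ for $\Re(b)>0$, so the series converges absolutely and uniformly on $\setR_{\geq 0}$ (using the uniform bound $|\phi_n(\radvar)|\le 1$, or the cruder $|L_n(2\radvar)|\le \exp(\radvar)$ times a polynomial—actually the clean bound $|\phi_n|\le1$ from $L_2$-normalization plus interpolation is cleanest, but any standard Laguerre bound suffices), and the uniform limit must agree with the $L_2$ limit.

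Finally, the projection formula is immediate: $\Pi_N$ is the $L_2$-orthogonal projection onto $\radmod\X_N = \mathrm{span}\{\phi_0,\dots,\phi_N\}$, and since the $\phi_n$ are orthogonal, $\Pi_N$ simply truncates the orthogonal expansion at $n=N$. The main obstacle, such as it is, is purely a matter of bookkeeping: justifying the interchange of summation and integration in the generating-function step and then tracking the precise range of validity in $b$ (the integral identity holding for $\Re(b)>-1$ even though the $L_2$ argument needs $\Re(b)>0$). Everything else is routine manipulation of geometric series.
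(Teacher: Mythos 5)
Your proposal is correct, but it reaches the key integral identity by a genuinely different route than the paper. The paper integrates $\int_0^\infty\exp\of{-b\radvar}\phi_n(\radvar)\,d\radvar$ by parts $n+1$ times (with an induction over the number of integrations), uses the differentiation property of the generalized Laguerre functions together with $L_{n-k,k}(0)=\binom{n}{k}$, and then sums the resulting binomial series to obtain $\frac{(b-1)^n}{(b+1)^{n+1}}$. You instead sum over $n$ against $t^n$ first, invoke the generating function of Lemma \ref{lem:prop_laguerre}(\ref{enum:laguerre_gen}), evaluate a single elementary exponential integral, and read off the coefficients of the resulting geometric series in $t$. Your route treats all $n$ simultaneously and avoids the induction, at the price of justifying the sum--integral interchange; note that for $-1<\Re\of b\leq 0$ the crude bound $|\phi_n|\leq 1$ is not integrable against $|\exp\of{-b\radvar}|$, so you should use a polynomial bound such as $|L_n(2\radvar)|\leq\sum_{k=0}^n\binom{n}{k}\frac{(2\radvar)^k}{k!}$, which makes Fubini work for $|t|<\frac{\Re\of b+1}{\Re\of b+3}$; matching power-series coefficients on that small disc already determines them, so no further continuation in $t$ or $b$ is needed (your analyticity remark is harmless but redundant). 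From there both arguments coincide: the expansion of $\exp\of{-b\cdot}$ follows from the complete orthogonal system of Lemma \ref{lem:prop_laguerre}(\ref{enum:laguerre_ortho}), and $\Pi_N$ truncates it. One small bonus of your write-up is that you address pointwise convergence of the expansion (which the paper's $L_2$ argument leaves implicit); just be aware that the bound $|\phi_n(\radvar)|\leq 1$ is the classical estimate $|e^{-x/2}L_n(x)|\leq 1$ and does not follow from ``$L_2$-normalization plus interpolation'' as you suggest in passing.
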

\begin{proof}
  It is easily shown by partial integration and induction over $j$, that for $j\in\setN,j\leq n+1$
  \begin{multline*}
    \int_0^\infty\exp\of{-b\radvar}\phi_n(\radvar)\,d\radvar=\\
    \frac{1}{b+1}\sum_{k=0}^{j-1}\left(\frac{2}{b+1}\right)^kL_{n}^{(k)}(0)+\left(\frac{2}{b+1}\right)^j\int_0^\infty\exp\of{-\radvar(b+1)}L_{n}^{(j)}\of {2\radvar}\,d\radvar.
  \end{multline*}
  For $j=n+1$ we obtain
  \begin{align*}
    \int_0^\infty\exp\of{-b\radvar}\phi_n(\radvar)\,d\radvar&=\frac{1}{b+1}\sum_{k=0}^n\left(\frac{2}{b+1}\right)^kL_{n}^{(k)}(0)\\
    &\stackrel{\ref{lem:prop_laguerre}(\ref{enum:laguerre_rec})}{=}\frac{1}{b+1}\sum_{k=0}^n\left(-\frac{2}{b+1}\right)^kL_{n-k,k}(0)\\
&=\frac{1}{b+1}\sum_{k=0}^n\left(-\frac{2}{b+1}\right)^k\twovector{n}{k}\\
&=\frac{1}{b+1}\left(1-\frac{2}{b+1}\right)^n\\
&=\frac{(b-1)^n}{(b+1)^{n+1}}.
  \end{align*}
  If $\Re\of b>0$ we have $\exp(-b\cdot)\in L_2\of{\setR_{\geq 0}}$.
  Since $\{\phi_n,n\in\setN_0\}$ is a complete orthogonal system of $L^2\of{\setR_{\geq 0}},$ we have
    \begin{align*}
    \exp(-b\radvar)=\sum_{n=0}^\infty\frac{\left(\phi_n,\exp(-b\cdot)\right)_{L_2\of{\setR_{\geq 0}}}}{\left(\phi_n,\phi_n\right)_{L_2\of{\setR_{\geq 0}}}}\phi_n\of x=\frac{2}{b+1}\sum_{n=0}^\infty\left(\frac{b-1}{b+1}\right)^n\phi_n\of \radvar.
  \end{align*}
\end{proof}
\begin{cor}
  \label{cor:1d_approx}
  For $b\in\setC$, $\Re\of b>0$ and $N\in\setN_0$
    $$\inf_{u_N\in\tilde\X_N}\left\|\exp(-b\cdot)-u_N\right\|_{L_2\of{\setR_{\geq 0}}}\leq\|(I-\Pi_N)\exp\of{-b\cdot}\|_{L_2\of{\setR_{\geq 0}}}=\frac{1}{\sqrt{2\Re\of b}}\left|\frac{b-1}{b+1}\right|^{N+1}.$$
\end{cor}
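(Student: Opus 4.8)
The plan is to combine the explicit series expansion from Theorem \ref{thm:1d_approx} with the orthogonality relations of Lemma \ref{lem:prop_laguerre}(\ref{enum:laguerre_ortho}); the argument is in essence a single Parseval computation, so I do not expect any genuine obstacle. The left-hand inequality is immediate: since $\Pi_N\exp\of{-b\cdot}\in\tilde\X_N$, the infimum of the approximation error over $\tilde\X_N$ is bounded by the error incurred by this particular element.

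For the equality I would abbreviate $q:=\frac{b-1}{b+1}$. By Theorem \ref{thm:1d_approx} the expansion $\exp\of{-b\cdot}=\frac{2}{b+1}\sum_{n=0}^\infty q^n\phi_n$ converges in $L_2\of{\setR_{\geq 0}}$ (this is just the Fourier series of $\exp\of{-b\cdot}\in L_2\of{\setR_{\geq 0}}$ with respect to the complete orthogonal system $\{\phi_n\}$, whose coefficients are computed there), and the orthogonal projection is the truncation $\Pi_N\exp\of{-b\cdot}=\frac{2}{b+1}\sum_{n=0}^N q^n\phi_n$. Hence $(I-\Pi_N)\exp\of{-b\cdot}=\frac{2}{b+1}\sum_{n=N+1}^\infty q^n\phi_n$, and using $\of{\phi_n,\phi_k}_{L_2\of{\setR_{\geq 0}}}=\tfrac12\delta_{n,k}$ together with Parseval's identity,
$$\left\|(I-\Pi_N)\exp\of{-b\cdot}\right\|_{L_2\of{\setR_{\geq 0}}}^2=\frac{4}{|b+1|^2}\sum_{n=N+1}^\infty |q|^{2n}\cdot\frac12=\frac{2}{|b+1|^2}\cdot\frac{|q|^{2(N+1)}}{1-|q|^2},$$
where the geometric series converges because $\Re\of b>0$.

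It remains to simplify the prefactor, and this is the only spot requiring care. A short computation gives $|b+1|^2-|b-1|^2=2\of{b+\bar b}=4\Re\of b$, so that $1-|q|^2=1-\frac{|b-1|^2}{|b+1|^2}=\frac{4\Re\of b}{|b+1|^2}$; in particular $|q|<1$, which is exactly what legitimizes the series summation above. Substituting this identity cancels the $|b+1|^{-2}$ factor and leaves $\left\|(I-\Pi_N)\exp\of{-b\cdot}\right\|_{L_2\of{\setR_{\geq 0}}}^2=\frac{|q|^{2(N+1)}}{2\Re\of b}$, and taking square roots yields the stated formula $\frac{1}{\sqrt{2\Re\of b}}\left|\frac{b-1}{b+1}\right|^{N+1}$.
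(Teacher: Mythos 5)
Your proposal is correct and follows essentially the same route as the paper: expand $\exp\of{-b\cdot}$ in the orthogonal system $\{\phi_n\}$ via Theorem \ref{thm:1d_approx}, sum the geometric tail using $\|\phi_n\|^2_{L_2\of{\setR_{\geq 0}}}=\tfrac12$, and simplify with $1-\left|\tfrac{b-1}{b+1}\right|^2=\tfrac{4\Re\of b}{|b+1|^2}$. The only (harmless) difference is that you spell out the trivial infimum bound and the identity $|b+1|^2-|b-1|^2=4\Re\of b$, which the paper leaves implicit.
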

\begin{proof}
  \begin{align*}
    \|(I-\Pi_N)\exp\of{-b \cdot}\|^2_{L^2\of{\setR_{\geq 0}}}&=\left\|\frac{2}{b+1}\sum_{n=N+1}^\infty\left(\frac{b-1}{b+1}\right)^n\phi_n\right\|^2_{L^2\of{\setR_{\geq 0}}}\\
    &=\left|\frac{2}{b+1}\right|^2\sum_{n=N+1}^\infty\left|\frac{b-1}{b+1}\right|^{2n}\left\|\phi_n\right\|^2_{L^2\of{\setR_{\geq 0}}}\\
    &=2\left|\frac{(b-1)^{N+1}}{(b+1)^{N+2}}\right|^2\sum_{n=0}^\infty\left|\frac{b-1}{b+1}\right|^{2n}\\
    &=2\left|\frac{(b-1)^{N+1}}{(b+1)^{N+2}}\right|^2\frac{1}{1-\left|\frac{b-1}{b+1}\right|^2}\\
    &=\left|\frac{b-1}{b+1}\right|^{2N+2}\frac{1}{2\Re\of b}.
  \end{align*}
\end{proof}
\begin{rmk}
  \label{rmk:1d_conv}
  Because of the representation of the solutions in the exterior
  $$u_\mathrm{ext}\of\radvar=\exp\of{\pm i\omega R}\exp\of{i\omega\sigma\radvar},$$
  for $d=1$, Theorem \ref{thm:1d_approx} and Corollary \ref{cor:1d_approx} (with $b=-i\sigma\omega$) state that the approximation by Laguerre functions in the $L^2$-norm depends on the quantity $\left|\frac{1+i\omega\sigma}{1-i\omega\sigma}\right|$. It is exact if $\omega\sigma=i$.
  In particular we have for $\Im\of{\sigma\omega}>0$
  \begin{align*}
    \inf_{u_h\in\tilde\X_N}\|\exp\of{i\sigma\omega \cdot}-u_h\|_{L^2\of{\setR_{\geq 0}}}
    &\leq\frac{1}{\sqrt{2\Im\of{\sigma\omega}}}\left|\frac{1+i\sigma\omega}{1-i\sigma\omega}\right|^{N+1},
  \end{align*}
  stating, that the best approximation error of outgoing solutions of the one dimensional Helmholtz equation decreases exponentially with respect to the number of exterior degrees of freedom $N$. 
  
  A similar approximation result for Hardy space infinite elements can be found in \cite{HohageNannen:09}. Note, that a different variational framework is used there. Nevertheless, as explained in Subsection \ref{sec:hsm}, this framework is to some extent equivalent to the one used here.
\end{rmk}

\subsection{Best approximation of the zeroth spherical Hankel function}
Since the radial part of the solutions of the three dimensional Helmholtz resonance problem in the exterior domain consist of spherical Hankel functions of the first kind, we proceed by discussing the approximation of $h_0^{(1)}$ by Laguerre functions.
\begin{defn}
  \label{defn:hankel}
  For $n\in\setN_0$, the spherical Hankel functions of the first kind $h^{(1)}_\nu$ can be defined by
  $$
  h_\nu(\radvar):=-\frac{i}{\radvar}\exp(i\radvar)\tilde h_\nu(\radvar),
  $$
  with
    $$
  \tilde h_\nu(\radvar):=(-i)^{\nu}\sum_{m=0}^n\frac{i^m}{m!(2\radvar)^m}\frac{(\nu+m)!}{(\nu-m)!)},
  $$
  (cf. \cite[Section 2.4]{ColtonKress:98}).
  To simplify the notation we will omit the superscript $\,^{(1)}$ and simply write $h_\nu:=h^{(1)}_\nu$.
\end{defn}
Suppose we want to approximate 
$$h_0(\omega R+i\radvar)=\frac{-\exp\of{i\omega R}\exp\of{-\radvar}}{-i\omega R+\radvar},$$
using our basis functions $\phi_n$.
This would be the case if we applied a frequency dependent complex scaling $\sigma\of\omega=\frac{\iu}{\omega}$ (cf. \cite{nw2018}).
Then the approximation error will be governed by the terms $\left(\frac{\exp\of{-\cdot}}{a+\cdot},\phi_n\right)_{L_2\of{\setR_{\geq 0}}}$, for $a=-i\omega R$.
This motivates the following definition.
\begin{defn}
  For $a\in\setC\setminus\setR_{\leq 0}$ and $n,k\in\setN_0$, we define
  \begin{align*}
    \alpha_{n,k}(a)&:=\int_0^\infty \frac{\exp(-\radvar)}{(a+\radvar)^k}\phi_n(\radvar)\,d\radvar,
  \end{align*}
\end{defn}
The following lemma shows, that the numbers $\alpha_{n,1}\of a$ can be calculated by a simple integral.
\begin{lem}
  \label{lem:alpha_calc}
    For $a\in\setC\setminus\setR_{\leq 0}$ and $n\in\setN_0$, we have
    \begin{align*}
      \alpha_{n,1}(a)&=\int_0^\infty\frac{\radvar^n\exp(-\radvar)}{(2a+\radvar)^{n+1}}\,d\radvar
    \end{align*}
    The numbers $\alpha_{n,1}\of a$ are the coefficients of the expansion of $\frac{\exp(-\cdot)}{a+\cdot}$ in the Laguerre functions $\phi_n$ and therefore
    $$
    \frac{\exp(-\radvar)}{a+\radvar}=2\sum_{n=0}^\infty\alpha_{n,1}(a)\phi_n(\radvar).
    $$
  \end{lem}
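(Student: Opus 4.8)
The plan is to establish the two claims in turn: first the integral formula for $\alpha_{n,1}(a)$, and then the Laguerre expansion of $\radvar\mapsto\frac{\exp(-\radvar)}{a+\radvar}$. For the integral formula, I would start from the definition $\alpha_{n,1}(a)=\int_0^\infty\frac{\exp(-\radvar)}{a+\radvar}\phi_n(\radvar)\,d\radvar$ and substitute the Rodrigues-type formula from Lemma \ref{lem:prop_laguerre}(\ref{enum:laguerre_rodriguez}) for $L_n(2\radvar)$, i.e. write $\phi_n(\radvar)=\exp(-\radvar)L_n(2\radvar)$ with $L_n(2\radvar)=\frac{\exp(2\radvar)}{n!}\cdot\frac{1}{(2\radvar)^n}\frac{d^n}{ds^n}\big(\exp(-s)s^{n}\big)\big|_{s=2\radvar}$; more conveniently, after the substitution $s=2\radvar$ the factor $\exp(-2\radvar)$ from the two exponentials combines with the $\exp(s)$ from Rodrigues, so the integrand becomes a scalar multiple of $\frac{1}{2a+s}\frac{d^n}{ds^n}\big(\exp(-s)s^n\big)$. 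Then I would integrate by parts $n$ times, each time moving a derivative off the $\frac{d^n}{ds^n}(\cdots)$ factor and onto $\frac{1}{2a+s}$; the boundary terms all vanish because $\frac{d^k}{ds^k}(\exp(-s)s^n)$ carries a factor of $\exp(-s)$ (killing the term at $\infty$) and a factor of $s$ for $k<n$ (killing the term at $0$, using $\Re a$ bounded away from the negative reals so no singularity is crossed). Since $\frac{d^n}{ds^n}\frac{1}{2a+s}=\frac{(-1)^n n!}{(2a+s)^{n+1}}$, after $n$ integrations by parts one is left, up to the constant $\frac{1}{n!}$, with $\int_0^\infty \exp(-s)s^n\cdot\frac{n!}{(2a+s)^{n+1}}\,ds$ times powers of $2$ that should cancel; tracking the constants (the $2^{-n}$ from $\frac{1}{(2\radvar)^n}$, the $dx=\frac12 ds$, the $2^{\,?}$ from differentiating) should yield exactly $\alpha_{n,1}(a)=\int_0^\infty\frac{s^n\exp(-s)}{(2a+s)^{n+1}}\,ds$. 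The bookkeeping of these constants is the step most likely to need care, but it is purely mechanical.

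For the expansion claim, I would invoke Lemma \ref{lem:prop_laguerre}(\ref{enum:laguerre_ortho}): since $\{\tfrac{1}{\sqrt2}\phi_n\}$ is a complete orthonormal system of $L_2(\setR_{\geq 0})$, any $g\in L_2(\setR_{\geq 0})$ expands as $g=\sum_{n=0}^\infty\frac{(\phi_n,g)}{(\phi_n,\phi_n)}\phi_n=2\sum_{n=0}^\infty(\phi_n,g)\phi_n$. Here $g(\radvar)=\frac{\exp(-\radvar)}{a+\radvar}$ lies in $L_2(\setR_{\geq 0})$ because $|a+\radvar|$ is bounded below by a positive constant for $a\in\setC\setminus\setR_{\leq 0}$ and $\exp(-\radvar)$ provides square-integrability, so $|g(\radvar)|\le C\exp(-\radvar)$. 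Then $(\phi_n,g)_{L_2(\setR_{\geq 0})}=\int_0^\infty\frac{\exp(-\radvar)}{a+\radvar}\phi_n(\radvar)\,d\radvar=\alpha_{n,1}(a)$ by definition, which gives $\frac{\exp(-\radvar)}{a+\radvar}=2\sum_{n=0}^\infty\alpha_{n,1}(a)\phi_n(\radvar)$, convergence in $L_2$. (Pointwise convergence, if one wants it, follows from analyticity/smoothness of $g$ away from the pole, but the statement as written is an $L_2$ identity and the completeness argument suffices.)

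I expect the main obstacle to be the constant-tracking in the repeated integration by parts — in particular keeping straight the interplay between the argument-scaling $\radvar\mapsto 2\radvar$ inside $L_n$, the substitution $s=2\radvar$, and the factors of $2$ produced by each differentiation in the Rodrigues formula. A clean alternative that sidesteps some of this is to use instead the generating function from Lemma \ref{lem:prop_laguerre}(\ref{enum:laguerre_gen}), $\sum_k L_k(x)t^k=\frac{1}{1-t}\exp\!\big(\!-\tfrac{tx}{1-t}\big)$: multiplying by $\exp(-\radvar)$ with $x=2\radvar$ gives $\sum_k\phi_k(\radvar)t^k=\frac{1}{1-t}\exp\!\big(\!-\radvar\tfrac{1+t}{1-t}\big)$, and one can then in principle identify the coefficients of $\frac{\exp(-\radvar)}{a+\radvar}$ by writing $\frac{1}{a+\radvar}=\int_0^\infty\exp(-\radvar\lambda)\exp(-a\lambda)\,d\lambda$ — wait, that requires $\Re a>0$; more robustly one keeps to the orthogonal-projection route of Theorem \ref{thm:1d_approx}. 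Either way, once the integral representation of $\alpha_{n,1}(a)$ is in hand, everything else is a direct consequence of completeness of the Laguerre system.
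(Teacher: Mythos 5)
Your proposal is correct and follows essentially the same route as the paper: the paper proves the identical identity via the Rodrigues formula and $n$-fold integration by parts (just run in the opposite direction, starting from $\int_0^\infty \frac{s^n e^{-s}}{(2a+s)^{n+1}}\,ds$ and moving derivatives onto $e^{-s}s^n$ rather than onto $\frac{1}{2a+s}$), and the expansion claim is exactly the completeness/orthogonality argument you give. Only a small slip: your quoted Rodrigues formula contains a spurious factor $\frac{1}{(2\radvar)^n}$, which does not belong there for $m=0$; your subsequent constant-tracking implicitly uses the correct form, so the argument goes through.
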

\begin{proof}
  It is easily shown by partial integration and induction in $j$, that for $j\leq n$
  \begin{align*}
    \int_{0}^\infty\frac{t^n\exp\of{-t}}{(2a+t)^{n+1}}\,dt &= \frac{(n-j)!}{n!}\int_0^\infty\frac{1}{(2a+t)^{n+1-j}}\frac{d^j}{dt^j}\left(\exp\of{-t}t^n\right)\,dt.
  \end{align*}
  For $j=n$ we obtain
  \begin{align*}
    \int_{0}^\infty\frac{t^n\exp\of{-t}}{(2a+t)^{n+1}}\,dt &= \frac{1}{n!}\int_0^\infty\frac{1}{2a+t}\frac{d^n}{dt^n}\left(\exp\of{-t}t^n\right)\,dt\\
    &\stackrel{\ref{lem:prop_laguerre}(\ref{enum:laguerre_rodriguez})}{=}\int_0^\infty\frac{1}{2a+t}\exp\of{-t}L_n\of{t}\,dt\\
    &=\int_0^\infty\frac{2\exp\of{-t}}{2a+2t}\phi_n\of{t}\,dt=\alpha_{n,1}\of a.
  \end{align*}
\end{proof}
The following theorem gives an asymptotic expansion of the terms $\alpha_{n,1}\of a$ with respect to $n$.
\begin{thm}[asymptotic behavior of $\alpha_{n,1}$]
  \label{thm:alpha_asympt}
  For $a\in\setC\setminus\setR_{\leq 0}$
  \begin{align*}
  \alpha_{n,1}(a)&=\exp\left(a-2\sqrt{2a(n+1)}\right)\frac{\sqrt \pi}{(2a(n+1))^\frac{1}{4}}\left(1+\mathcal O\of{\frac{1}{\sqrt{n+1}}}\right),&n&\to\infty.
\end{align*}
The symbols $\sqrt{z}$ and $z^\frac{1}{4}$ for $z\in\setC\setminus\setR_{\leq 0}$ assume their respective principal values (their image is symmetric with respect to the positive real axis).
\end{thm}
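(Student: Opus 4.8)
The plan is to extract the asymptotics from the integral representation of $\alpha_{n,1}(a)$ in Lemma~\ref{lem:alpha_calc} by the (complex) Laplace / saddle point method. As a first step I would tidy up the integral with the Möbius substitution $t=\frac{2aw}{1-w}$ (which makes $\frac{t}{2a+t}=w$) followed by $y=1-w$; a short computation yields
\begin{align*}
  \alpha_{n,1}(a)=\exp\of{2a}\int_0^1\frac{\of{1-y}^n}{y}\exp\of{-\frac{2a}{y}}\,\diff y.
\end{align*}
This form reveals the mechanism: as $n\to\infty$ the factor $\of{1-y}^n$ pushes the mass toward $y=0$ while $\exp\of{-2a/y}$ suppresses it there, the balance occurring at $y\sim\sqrt{2a/n}$, which is exactly the scale that produces the exponent $-2\sqrt{2an}$.

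Writing the integrand as $\frac1y\exp\of{\psi_n\of y}$ with $\psi_n\of y:=n\log\of{1-y}-\frac{2a}{y}$, I would then perform a steepest descent analysis. The saddle point solves $\psi_n'\of{y_*}=0$, i.e. $n y_*^2+2ay_*-2a=0$, so $y_*=\frac{-a+\sqrt{a^2+2an}}{n}=\sqrt{2a/n}-\frac an+\mathcal O\of{n^{-3/2}}$. A Taylor expansion of $\psi_n$ about $y_*$ (using $\log\of{1-y_*}=-y_*-\tfrac12 y_*^2-\cdots$ together with the expansion of $2a/y_*$) gives, after the cancellations, $\psi_n\of{y_*}=-a-2\sqrt{2an}+\mathcal O\of{n^{-1/2}}$ and $\psi_n''\of{y_*}=-\frac{n}{\of{1-y_*}^2}-\frac{4a}{y_*^3}=-\sqrt{2/a}\,n^{3/2}\of{1+\mathcal O\of{n^{-1/2}}}$. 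One then deforms the path so that it passes through $y_*$ in the steepest descent direction. This is justified by Cauchy's theorem; it is most transparent in the original variable, where the integrand $\frac{t^n\exp\of{-t}}{\of{2a+t}^{n+1}}$ is meromorphic with a single pole at $t=-2a$, and since $a\in\setC\setminus\setR_{\leq 0}$ that pole lies off the sector swept when $\of{0,\infty}$ is rotated toward the saddle ray, while convergence at the endpoints is retained. The local Gaussian contribution is $\frac{1}{y_*}\exp\of{\psi_n\of{y_*}}\sqrt{\frac{2\pi}{-\psi_n''\of{y_*}}}$; inserting $\frac1{y_*}\sim\sqrt{n/(2a)}$ and $\sqrt{2\pi/\of{-\psi_n''\of{y_*}}}\sim\sqrt{2\pi}\of{a/2}^{1/4}n^{-3/4}$ collapses the algebraic factors to $\frac{\sqrt\pi}{\of{2an}^{1/4}}$, and multiplication by the prefactor $\exp\of{2a}$ produces $\alpha_{n,1}(a)=\frac{\sqrt\pi}{\of{2an}^{1/4}}\exp\of{a-2\sqrt{2an}}\of{1+\mathcal O\of{n^{-1/2}}}$. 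Finally $2\sqrt{2a(n+1)}=2\sqrt{2an}+\mathcal O\of{n^{-1/2}}$ and $\of{2an}^{-1/4}=\of{2a(n+1)}^{-1/4}\of{1+\mathcal O\of{n^{-1}}}$, so $n$ may be replaced by $n+1$ throughout without changing the order of the remainder, which is the assertion.

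A shorter but less self-contained route rescales $y=\sqrt{2a/n}\,\zeta$ and uses $n\log\of{1-\sqrt{2a/n}\,\zeta}=-\sqrt{2an}\,\zeta-a\zeta^2+\mathcal O\of{n^{-1/2}}$ uniformly on compact $\zeta$-sets to recognise $\exp\of{-2a}\alpha_{n,1}(a)$ as asymptotic to $\exp\of{-a}\int_0^\infty\frac1\zeta\exp\of{-\sqrt{2an}\of{\zeta+\zeta^{-1}}}\,\diff\zeta=\exp\of{-a}\cdot 2K_0\of{2\sqrt{2an}}$, and then invokes the classical asymptotics $K_0\of z=\sqrt{\pi/(2z)}\exp\of{-z}\of{1+\mathcal O\of{1/z}}$.

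The main obstacle is making the saddle point step rigorous for complex $a$. One has to pin down a concrete steepest descent contour and check that it is reachable from $\of{0,\infty}$ without crossing the pole at $t=-2a$ --- this is exactly where the hypothesis $a\notin\setR_{\leq 0}$ is used --- and without destroying convergence at the endpoints; one must bound the contribution of the contour away from the saddle by a term exponentially smaller than the main one, uniformly in $n$; and one must control both the cubic-and-higher Taylor remainder of $\psi_n$ and the variation of the amplitude $1/y$ across the $\mathcal O\of{n^{-1/4}}$-wide Gaussian window well enough to land the error at $\mathcal O\of{n^{-1/2}}$ rather than merely $o\of 1$ --- the key point being that the terms linear in $y-y_*$ integrate to zero against the Gaussian, which recovers the extra half power of $n$. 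The attendant arithmetic, in particular extracting the exact constant $\sqrt\pi$ and the exact factor $\exp\of a$ out of $\psi_n\of{y_*}$, is routine but must be carried to the right order.
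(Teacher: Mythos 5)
Your argument is essentially correct and the constants work out, but it takes a genuinely different route from the paper's. The paper proves the theorem in two short steps by citation: starting from Lemma \ref{lem:alpha_calc} it recognizes $\alpha_{n,1}(a)=n!\,\mathrm{U}(n+1,1,2a)$, a confluent hypergeometric function of the second kind, and then invokes Temme's uniform asymptotic expansion $n!\,\mathrm{U}(n+1,1,2a)=2e^{a}\left(K_0\of{2\sqrt{2a(n+1)}}+\mathcal O\of{(n+1)^{-1/2}}\right)$ together with the standard large-argument asymptotics of $K_0$. You instead perform the Laplace/steepest-descent analysis of the integral directly; your saddle point, the values $\psi_n\of{y_*}=-a-2\sqrt{2an}+\mathcal O\of{n^{-1/2}}$ and $\psi_n''\of{y_*}\sim-\sqrt{2/a}\,n^{3/2}$, and the collapse of the algebraic factors to $\sqrt{\pi}\,(2an)^{-1/4}$ all check out, and your shorter variant in fact rediscovers exactly the $K_0$ connection the paper cites. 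What the paper's route buys is brevity and rigor for all $a\in\setC\setminus\setR_{\leq 0}$ at once, since the cited expansion is already uniform in the complex parameter; what your route buys is self-containedness and an explanation of where the exponent $-2\sqrt{2a(n+1)}$ comes from, at the price of the contour and error-term work you yourself flag as the main obstacle. One caveat worth fixing: your intermediate identity $\alpha_{n,1}(a)=e^{2a}\int_0^1\frac{(1-y)^n}{y}e^{-2a/y}\,dy$ is valid as an integral over the real segment only for $\Re\of{a}>0$; for $\Re\of{a}\leq 0$ (which does occur in the paper's application, e.g. $a=-i\omega R$ with $\Im\of{\omega}<0$) the real integral diverges at $y=0$, and the image of $(0,\infty)$ under your M\"obius substitution is a complex arc approaching $0$ along the ray $\arg y=\arg a$, so the $y$-contour must be kept in (or deformed within) the sector where $\Re\of{a/y}>0$ --- consistent with, but not identical to, the segment you wrote down.
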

\begin{proof}
  Lemma \ref{lem:alpha_calc} states that 
  $$\alpha_{n,1}\of a=n!\mathrm{U}\of{n+1,1,2a},$$
  where for  $n\in\setN_0$, $a\in\setC\setminus\setR_{\leq 0}$
  $$U(n+1,m,a)=\frac{a^{1-m}}{n!}\int_0^\infty\frac{t^n\exp\of{-t}}{(a+t)^{n+2-m}}\,dt.$$
  The function $\mathrm U$ is called \emph{confluent hypergeometric function of the second kind}.
  Using (10.3.39) and (9.1.3) in \cite{Temme} we obtain 

  \begin{align*}
    n!U(n+1,1,2a)&=2\exp\of{a}\left(K_{0}\of{2\sqrt{2a(n+1)}}+\mathcal O\of{\frac{1}{\sqrt{n+1}}}\right),
  \end{align*}
  for $n\to\infty$ and
  \begin{align*}
    K_{n}\of{z}=\sqrt{\frac{\pi}{2z}}\exp\of{-z}\left(1+\mathcal O\of{\frac{1}{z}}\right),
  \end{align*}
  for $|z|\to\infty$.
  All in all we obtain
  \begin{align*}
    \alpha_{n,1}(a)&=n!U(n+1,1,2a)\\
    &=\sqrt\pi(2a(n+1))^{-\frac{1}{4}}\exp\of{a-2\sqrt{2a(n+1)}}\left(1+\mathcal O\of{\frac{1}{\sqrt{n+1}}}\right),
  \end{align*}
  for $n\to\infty$.
\end{proof}
Using the lemma above we can now bound the best approximation error of $h_0\of{\omega R+i\cdot}$ by Laguerre functions.
\begin{lem}
  \label{lem:hankel0_m1}
  Let $R>0$, $N\in\setN$ and $\omega\in\setC$. Then there exists $c>0$ independent of $N$, such that
    $$\left\|(I-\Pi_N)h_0\of{\omega R+i\cdot}\right\|_{L_2\of{\setR_{\geq 0}}}\leq\frac{c\sqrt\pi}{(2|\omega|R)^{\frac{1}{4}}}\exp\of{-2\Re\sqrt{\omega R(N+1)}}.$$
  \end{lem}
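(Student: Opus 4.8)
The plan is to express the $L_2$-error in terms of the coefficients $\alpha_{n,1}$ and then invoke the asymptotic expansion of Theorem~\ref{thm:alpha_asympt}. By Lemma~\ref{lem:alpha_calc} with $a=-i\omega R$ (which lies in $\setC\setminus\setR_{\leq 0}$ whenever $\omega\neq 0$; the degenerate case $\omega=0$ must be noted separately, but then $h_0(\omega R+i\cdot)$ is not in the relevant class and the claim is vacuous or handled trivially), we have the expansion
\begin{align*}
  h_0\of{\omega R+i\radvar}=\frac{-\exp\of{i\omega R}\exp\of{-\radvar}}{-i\omega R+\radvar}=-\exp\of{i\omega R}\,2\sum_{n=0}^\infty\alpha_{n,1}\of{-i\omega R}\phi_n\of\radvar.
\end{align*}
Using orthogonality of the $\phi_n$ from Lemma~\ref{lem:prop_laguerre}(\ref{enum:laguerre_ortho}), namely $\|\phi_n\|_{L_2\of{\setR_{\geq 0}}}^2=\tfrac12$, the projection error squared is
\begin{align*}
  \left\|(I-\Pi_N)h_0\of{\omega R+i\cdot}\right\|_{L_2\of{\setR_{\geq 0}}}^2
  =4|\exp\of{i\omega R}|^2\sum_{n=N+1}^\infty|\alpha_{n,1}\of{-i\omega R}|^2\cdot\tfrac12
  =2|\exp\of{i\omega R}|^2\sum_{n=N+1}^\infty|\alpha_{n,1}\of{-i\omega R}|^2.
\end{align*}

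Next I would control the tail sum. From Theorem~\ref{thm:alpha_asympt} with $a=-i\omega R$, there is a constant so that $|\alpha_{n,1}\of{-i\omega R}|\leq C\,|2a(n+1)|^{-1/4}\exp\of{\Re\of a-2\Re\sqrt{2a(n+1)}}$ for all $n$ large (and, after enlarging $C$, for all $n\geq 0$, since each $\alpha_{n,1}$ is finite). Writing $2a=-2i\omega R$, note that $\Re\sqrt{2a(n+1)}=\sqrt{2(n+1)}\,\Re\sqrt{a}$ and that $\Re\sqrt{-i\omega R}>0$; a short computation gives $\Re\sqrt{-i\omega R}=\sqrt{|\omega|R/2}\,\cos(\arg(-i\omega R)/2)$, and since $\arg(-i\omega R)\in(-\pi,\pi)$ strictly (using $\omega\neq 0$), this cosine is strictly positive. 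In fact one checks $2\Re\sqrt{2a(n+1)}=4\Re\sqrt{\tfrac{a(n+1)}{2}}$; matching the target exponent $2\Re\sqrt{\omega R(N+1)}$ requires identifying $\Re\sqrt{-i\omega R}$ with $\Re\sqrt{\omega R}$ up to the harmless factor coming from $|{-i}|=1$ — this bookkeeping of the principal square root is the one place to be careful, but it is elementary.

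Then the tail is a sum of terms decaying like $\exp\of{-c\sqrt{n+1}}$ with $c=2\sqrt{2}\,\Re\sqrt{a}>0$, so $\sum_{n=N+1}^\infty|\alpha_{n,1}\of a|^2\leq \tilde C\,(N+1)^{-1/2}\exp\of{-2c\sqrt{N+1}}\cdot(\text{geometric-type factor})$; more precisely, since $\sqrt{n+1}-\sqrt{N+1}\geq \tfrac{n-N}{2\sqrt{n+1}}$ is too weak, I would instead bound $\sqrt{n+1}\geq\sqrt{N+1}+\tfrac{n-N}{2\sqrt{n+2}}$ crudely by $\sqrt{n+1}\geq\sqrt{N+1}$ for the leading factor and use $\exp\of{-2c(\sqrt{n+1}-\sqrt{N+2})}\leq \exp\of{-\kappa(n-N)}$ on $n\geq N+1$ for some $\kappa>0$ depending only on $c$, to sum the geometric series. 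Taking the square root, absorbing $|\exp\of{i\omega R}|$, the $(2|\omega|R)^{-1/4}$ prefactor, and all $N$-independent constants into a single $c>0$, and using $2\Re\sqrt{\tfrac{2a(N+1)}{1}}$ rewritten as $2\Re\sqrt{\omega R(N+1)}$ via the factorization $\sqrt{-i\omega R}=\sqrt{-i}\sqrt{\omega R}$ together with $\Re(\sqrt{-i}\,w)$ versus $\Re w$ — the statement as written evidently uses a slightly generous constant to cover this — yields the claimed bound. The main obstacle is purely the careful tracking of branches of $\sqrt{\cdot}$ and verifying positivity of the relevant real part uniformly in the given range of $\omega$; the summation of the tail is routine once the exponential rate is pinned down.
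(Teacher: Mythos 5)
Your overall skeleton is exactly the paper's: write $h_0(\omega R+i\cdot)$ as $-e^{i\omega R}\exp(-\cdot)/(-i\omega R+\cdot)$, expand in the $\phi_n$ via Lemma \ref{lem:alpha_calc}, use the orthogonality $\|\phi_n\|^2=\tfrac12$ to reduce the projection error to the tail sum $\sum_{n>N}|\alpha_{n,1}(-i\omega R)|^2$, and then insert the asymptotics of Theorem \ref{thm:alpha_asympt}. The one place where your argument genuinely breaks is the summation of the tail. The inequality you invoke, $\exp\of{-2c(\sqrt{n+1}-\sqrt{N+2})}\leq\exp\of{-\kappa(n-N)}$ for all $n\geq N+1$ with $\kappa>0$ depending only on $c$, is false: since $\sqrt{n+1}-\sqrt{N+2}=\frac{n-N-1}{\sqrt{n+1}+\sqrt{N+2}}$ grows only like $\sqrt{n}$, it cannot dominate a term linear in $n-N$, so there is no geometric series to sum. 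Moreover, the quantity you would then need, $\sum_{n>N}e^{-2c\sqrt{n+1}}\leq Ce^{-2c\sqrt{N+1}}$ with $C$ independent of $N$, is itself false; that tail is of order $\sqrt{N}\,e^{-2c\sqrt{N+1}}$. The paper avoids this by \emph{not} pulling the factor $(n+1)^{-1/2}$ out of the sum: keeping the summand $\frac{\exp(-4\Re\sqrt{\omega R(n+1)})}{\sqrt{n+1}}$ intact, it is decreasing in $n$, and the integral comparison $\int_{N+1}^\infty\frac{e^{-4\sqrt{\omega Rt}}}{\sqrt t}\,dt$ evaluates exactly (substitute $s=\sqrt t$) to a constant times $e^{-4\sqrt{\omega R(N+1)}}$, with no extra algebraic factor. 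Your route can be repaired — if you bound the pure exponential tail correctly you pick up a factor $\sqrt{N+1}$ which is then cancelled by the $(N+1)^{-1/2}$ you pulled out — but as written the key summation step fails.

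A secondary remark: the branch bookkeeping you defer cannot be "covered by a slightly generous constant". The asymptotics give decay $\exp\of{-2\Re\sqrt{-2i\omega R(n+1)}}$, and for the principal branch $\Re\sqrt{-2i\omega R}=\Re\sqrt{\omega R}+\Im\sqrt{\omega R}$, so the discrepancy with the stated exponent $2\Re\sqrt{\omega R(N+1)}$ is $N$-dependent and vanishes only when $\Im\sqrt{\omega R}=0$ (e.g.\ real positive $\omega$); a constant prefactor cannot absorb it. The paper's own proof passes over this identification silently, so this is a shared looseness rather than a defect particular to your write-up, but your claim that it is harmless constant-chasing is not accurate.
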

\begin{proof}
  \begin{align*}
    \left\|(I-\Pi_N)h_0\of{\omega R+i\cdot}\right\|_{L_2\of{\setR_{\geq 0}}}^2&=
    \left\|(I-\Pi_N)(-1)\exp\of{i \omega R}\frac{\exp\of{-\cdot}}{-i\omega R+\cdot}\right\|_{L_2\of{\setR_{\geq 0}}}^2\\
    &=\sum_{n=N+1}^\infty|2\alpha_{n,1}\of{-i\omega R}|^2\|\phi_n\|^2\\
    &\leq 2c\sum_{n=N+1}^\infty\left|\exp\left(-i\omega R-2\sqrt{-2i\omega R(n+1)}\right)\right.\\
    {}&\times\left.\frac{\sqrt \pi}{(-2i\omega R(n+1))^\frac{1}{4}}\right|^2\\
    &=c\pi\sqrt{\frac{2}{|\omega| R}}\sum_{n=N+1}^\infty\frac{\exp\left(-4\Re\sqrt{\omega R(n+1)}\right)}{\sqrt{n+1}}.
  \end{align*}
  Since the summand is a decreasing function in $n$ we can replace the sum with an integral and obtain
  \begin{align*}
    \left\|(I-\Pi_N)h_0\of{\omega R+i\cdot}\right\|_{L_2\of{\setR_{\geq 0}}}^2&\leq
    c\pi\sqrt{\frac{2}{\omega R}}\int_{N+1}^\infty\frac{\exp\left(-4\sqrt{\omega Rt}\right)}{\sqrt{t}}dt\\
    &=2c\pi\sqrt{\frac{2}{\omega R}}\int_{\sqrt {N+1}}^\infty\exp\of{-4\sqrt {\omega R} s}ds\\
    &=\frac{c\pi}{\sqrt {2\omega R}}\exp\of{-4\sqrt{\omega R(N+1)}}.
  \end{align*}
\end{proof}
Lemma \ref{lem:hankel0_m1} only gives us the approximation error for a frequency dependent complex scaling $\sigma\of{\omega}=\frac{i}{\omega}$ which leads to solutions with exponential decay $\exp(-\cdot)$. General scalings of the form (\ref{eq:scaling}) result in solutions with exponential decay $\exp\of{\Re\of{i\omega\sigma}}$ leading to an additional error term.
\begin{thm}
  \label{thm:hankelz_split}
  Let $\omega,\sigma\in\setC$ with $\Im\of{\omega\sigma}>0$, and $R>0$ and $N\in\setN$. Then there exist constants $C_1,C_2>0$ independent of $N$, such that the best approximation error of 
  $h_0\of{\omega R+\omega\sigma\cdot}$ can be bounded by
  \begin{align*}
    \inf_{u_{N}\in\tilde\X_N}\|h_0\of{\omega R+i\omega\sigma\cdot}-u_N\|_{L_2\of{\setR_{\geq 0}}}\leq C_1\left|\frac{1+i\sigma\omega}{1-i\sigma\omega}\right|^{N+1}+C_2\varepsilon\of{N,\frac{R}{\sigma},-i\omega\sigma}
  \end{align*}
  with
  \begin{align*}
    \varepsilon\of{N,a,b}:=\|(I-\Pi_N)\frac{1}{a+\cdot}\Pi_N\exp\of{-b\cdot}\|_{L_2\of{\setR_{\geq 0}}}.
  \end{align*}
\end{thm}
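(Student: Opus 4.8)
The plan is to take the $L_2\of{\setR_{\geq 0}}$-orthogonal projection $\Pi_N$ as an explicit competitor in the infimum and to split the resulting projection error into the two advertised contributions. First I would record the closed form of the zeroth spherical Hankel function: since $\tilde h_0\equiv 1$, Definition~\ref{defn:hankel} gives $h_0\of z=-\tfrac{i}{z}\exp\of{iz}$, and writing $\omega R+\omega\sigma\radvar=\omega\sigma\bigl(\tfrac{R}{\sigma}+\radvar\bigr)$ yields
\begin{align*}
  h_0\of{\omega R+\omega\sigma\radvar}=\frac{-i\exp\of{i\omega R}}{\omega\sigma}\,\frac{\exp\of{-b\radvar}}{a+\radvar},\qquad a:=\frac{R}{\sigma},\quad b:=-i\omega\sigma .
\end{align*}
By hypothesis $\Re\of b=\Im\of{\omega\sigma}>0$, so $\exp\of{-b\cdot}\in L_2\of{\setR_{\geq 0}}$ and Corollary~\ref{cor:1d_approx} applies, and $\Im\of\sigma>0$ forces $\Im\of a<0$, hence $a\notin\setR_{\leq 0}$ and $\inf_{\radvar\geq 0}|a+\radvar|\geq|\Im a|>0$.

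Next I would perform the decomposition. Bounding the best approximation error by the orthogonal projection error and writing $\exp\of{-b\cdot}=\Pi_N\exp\of{-b\cdot}+(I-\Pi_N)\exp\of{-b\cdot}$ in the numerator, the triangle inequality gives
\begin{align*}
  \inf_{u_N\in\tilde\X_N}\bigl\|h_0\of{\omega R+\omega\sigma\cdot}-u_N\bigr\|_{L_2\of{\setR_{\geq 0}}}
  &\leq\left|\frac{\exp\of{i\omega R}}{\omega\sigma}\right|\left\|(I-\Pi_N)\frac{\exp\of{-b\cdot}}{a+\cdot}\right\|_{L_2\of{\setR_{\geq 0}}}\\
  &\leq\left|\frac{\exp\of{i\omega R}}{\omega\sigma}\right|\left(\left\|(I-\Pi_N)\frac{1}{a+\cdot}\Pi_N\exp\of{-b\cdot}\right\|_{L_2\of{\setR_{\geq 0}}}+\left\|(I-\Pi_N)\frac{1}{a+\cdot}(I-\Pi_N)\exp\of{-b\cdot}\right\|_{L_2\of{\setR_{\geq 0}}}\right).
\end{align*}
The first summand on the right is precisely $\varepsilon\of{N,\tfrac{R}{\sigma},-i\omega\sigma}$, giving the second term of the claim with $C_2:=\bigl|\exp\of{i\omega R}/(\omega\sigma)\bigr|$.

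For the remaining summand I would use that $I-\Pi_N$ is an orthogonal projection onto a proper subspace, hence of operator norm $1$ on $L_2\of{\setR_{\geq 0}}$, and that multiplication by $\tfrac{1}{a+\cdot}$ is a bounded operator there of norm $\sup_{\radvar\geq 0}|a+\radvar|^{-1}\leq|\Im a|^{-1}<\infty$. With Corollary~\ref{cor:1d_approx} and the identity $\bigl|\tfrac{b-1}{b+1}\bigr|=\bigl|\tfrac{1+i\sigma\omega}{1-i\sigma\omega}\bigr|$ (which holds because $b=-i\omega\sigma$) this yields
\begin{align*}
  \left\|(I-\Pi_N)\frac{1}{a+\cdot}(I-\Pi_N)\exp\of{-b\cdot}\right\|_{L_2\of{\setR_{\geq 0}}}\leq\frac{1}{|\Im a|}\bigl\|(I-\Pi_N)\exp\of{-b\cdot}\bigr\|_{L_2\of{\setR_{\geq 0}}}=\frac{1}{|\Im a|\sqrt{2\Re b}}\left|\frac{1+i\sigma\omega}{1-i\sigma\omega}\right|^{N+1},
\end{align*}
which is the first term of the claim with $C_1:=\bigl|\exp\of{i\omega R}/(\omega\sigma)\bigr|\bigl(|\Im a|\sqrt{2\Re b}\bigr)^{-1}$; both constants are manifestly independent of $N$.

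I expect the only non-routine points to be minor book-keeping ones: checking that $f\mapsto f/(a+\cdot)$ is bounded on $L_2\of{\setR_{\geq 0}}$, which rests on $a=R/\sigma$ having strictly negative imaginary part so that $a+\radvar$ stays uniformly away from $0$, and keeping the $N$-independent prefactor coming from the closed form of $h_0$ cleanly separated from the $N$-dependent geometric and $\varepsilon\of{N,\cdot,\cdot}$ factors. The actual estimation of the term $\varepsilon\of{N,\tfrac{R}{\sigma},-i\omega\sigma}$ is deliberately postponed, to be carried out via the asymptotics of the numbers $\alpha_{n,k}$, and so it does not enter this argument.
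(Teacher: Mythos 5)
Your proof is correct and takes essentially the same route as the paper's: the same closed form of $h_0$, the same splitting of the exponential into its $\Pi_N$- and $(I-\Pi_N)$-parts under $(I-\Pi_N)\frac{1}{R/\sigma+\cdot}$, identification of one piece with $\varepsilon\of{N,\tfrac{R}{\sigma},-i\omega\sigma}$, and Corollary \ref{cor:1d_approx} for the other. Your justification that multiplication by $\tfrac{1}{R/\sigma+\cdot}$ is bounded on $L_2\of{\setR_{\geq 0}}$ (via $\Im\of{R/\sigma}<0$, which uses the paper's standing assumption $\Im\of\sigma>0$) is in fact more explicit than the paper, which absorbs this into an unexplained constant $C$.
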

\begin{proof}
  \begin{align*}
    \|(I-\Pi_N)h_0\of{\omega R+\omega\sigma\cdot}\|_{L_2\of{\setR_{\geq 0}}}=&
  \|(I-\Pi_N)\frac{-i\exp\of{i\omega R}}{\omega\sigma}\frac{\exp\of{i\omega\sigma\cdot}}{\frac{R}{\sigma}+\cdot}\|_{L_2\of{\setR_{\geq 0}}}\\
  \leq& C_2\|(I-\Pi_N)\frac{1}{\frac{R}{\sigma}+\cdot}(I-\Pi_N){\exp\of{i\omega\sigma\cdot}}\|_{L_2\of{\setR_{\geq 0}}}\\
  &{}+C_2\underbrace{\|(I-\Pi_N)\frac{1}{\frac{R}{\sigma}+\cdot}\Pi_N{\exp\of{i\omega\sigma\cdot}}\|_{L_2\of{\setR_{\geq 0}}}}_{\varepsilon\of{N,\frac{R}{\sigma},-i\omega\sigma}},
\end{align*}
with $C_2=\left|\frac{\exp\of{i\omega R}}{\omega\sigma}\right|$.
The first term can be bounded by
\begin{align*}
  \|(I-\Pi_N)\frac{1}{\frac{R}{\sigma}+\cdot}(I-\Pi_N){\exp\of{i\omega\sigma\cdot}}\|_{L_2\of{\setR_{\geq 0}}}&\leq 2C\|(I-\Pi_N){\exp\of{i\omega\sigma\cdot}}\|_{L_2\of{\setR_{\geq 0}}}\\
  &\leq2C\frac{1}{\sqrt{2\Im\of{\omega\sigma}}}\left|\frac{1+i\omega\sigma}{1-i\omega\sigma}\right|^{N+1},
\end{align*}
for some constant $C>0$ (cf. Corollary \ref{cor:1d_approx}).
\end{proof}

To obtain a bound for the best approximation error of $h_0$ in the space $\tilde X_N$ we need to find a bound for the expression $\varepsilon$. Since for $b\in\setC$, $a\in\setC\setminus\setR_{\leq 0}$
\begin{align*}
  (I-\Pi_N)\frac{1}{a+\cdot}\Pi_N\exp\of{-b\cdot}=\sum_{n=N+1}^\infty\int_0^\infty\frac{1}{a+\radvar}\sum_{k=0}^N\frac{(1-b)^{k}}{(1+b)^{k+1}}\phi_k\of\radvar\phi_n\of\radvar\,d\radvar\phi_n\of\cdot,
\end{align*}
we need to know the asymptotic behavior of the expressions
\begin{align}
  \label{eq:beta}
    \beta_{n,k}(a):=\int_0^\infty\frac{\phi_n(x)\phi_k(x)}{a+x}\,dx,
\end{align}
for large $n\in\setN$.
Thus, we state the following lemma.

\begin{lem}
  \label{lem:beta}
  Let $a\in\setC\setminus\setR_{<0}$, and $\beta_{n,k}$ given by (\ref{eq:beta}).
  Then for $n\geq k$ there holds
  \begin{align}
    \beta_{n,k}\of a&=\alpha_{n,1}(a)L_k(-2a).
  \end{align}
\end{lem}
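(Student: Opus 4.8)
The plan is to peel off the constant $L_k(-2a)$ from $L_k(2x)$ by a polynomial division in the variable $x$, which turns the weight $\frac{\phi_n(x)\phi_k(x)}{a+x}$ into $L_k(-2a)\frac{\phi_n(x)\exp(-x)}{a+x}$ plus a polynomial-times-$\exp(-x)$ remainder; the first piece is exactly the integral defining $\alpha_{n,1}(a)$, and the remainder integrates to zero against $\phi_n$ precisely when $n\ge k$, by orthogonality of the Laguerre functions. Concretely, first I would use Definition \ref{defn:laguerre} to write $\phi_n(x)\phi_k(x)=\phi_n(x)\exp(-x)L_k(2x)$, so that
\begin{align*}
  \beta_{n,k}(a)=\int_0^\infty\frac{\phi_n(x)\exp(-x)L_k(2x)}{a+x}\,dx .
\end{align*}
Viewing $x\mapsto L_k(2x)$ as a polynomial of degree $k$, the polynomial $L_k(2x)-L_k(-2a)$ vanishes at $x=-a$, hence is divisible by $(x+a)$; write $L_k(2x)=L_k(-2a)+(x+a)r_k(x)$ with $r_k$ a polynomial of degree at most $k-1$ (for $k=0$ one simply has $r_0\equiv 0$).

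Substituting this decomposition into the integrand and cancelling the factor $(x+a)$ against the denominator in the second summand gives
\begin{align*}
  \beta_{n,k}(a)=L_k(-2a)\int_0^\infty\frac{\phi_n(x)\exp(-x)}{a+x}\,dx+\int_0^\infty\phi_n(x)\exp(-x)r_k(x)\,dx ,
\end{align*}
and the first integral on the right is $\alpha_{n,1}(a)$ by definition. To finish I would observe that, since $\{L_0(2\cdot),\ldots,L_{k-1}(2\cdot)\}$ spans the polynomials of degree at most $k-1$, one can expand $r_k=\sum_{j=0}^{k-1}c_j L_j(2\cdot)$, whence $\exp(-\cdot)r_k=\sum_{j=0}^{k-1}c_j\phi_j\in\mathrm{span}\{\phi_0,\ldots,\phi_{k-1}\}$. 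Because $n\ge k>j$ for every index $j$ occurring in this sum, the $L_2\of{\setR_{\geq 0}}$-orthogonality of the generalized Laguerre functions from Lemma \ref{lem:prop_laguerre}(\ref{enum:laguerre_ortho}) yields $\int_0^\infty\phi_n(x)\exp(-x)r_k(x)\,dx=\frac12\sum_{j=0}^{k-1}c_j\delta_{n,j}=0$. Combining the two identities gives $\beta_{n,k}(a)=\alpha_{n,1}(a)L_k(-2a)$, as claimed.

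I do not expect a genuine obstacle here; the argument is essentially a change of the polynomial factor modulo the denominator. The one point deserving care is that the hypothesis $n\ge k$ is exactly what is needed for the orthogonality step (so that $\phi_n$ is orthogonal to all of $\phi_0,\ldots,\phi_{k-1}$), and one should note in passing that for $a$ in the stated range all occurring integrals converge, so the algebraic manipulations are legitimate. An alternative route would be an induction on $k$ using the three-term recurrence (\ref{eq:laguerre_rec}) together with the corresponding recurrence for $L_k$, but the polynomial-division argument above is shorter and avoids bookkeeping.
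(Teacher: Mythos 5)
Your argument is correct, but it is not the paper's route. The paper proves the identity by induction on $k$: the cases $k=0$ and $k=1$ are computed directly, and the induction step applies the three-term recurrence \eqref{eq:laguerre_rec} to $\phi_k$ inside the integral, splits $\frac{x}{a+x}=1-\frac{a}{a+x}$, uses orthogonality to kill the term $\int_0^\infty\phi_n\phi_{k-1}\,dx$ (this is where $n\geq k$ enters), and then recognizes the same three-term recurrence for $L_k(-2a)$. Your proof instead does the whole reduction in one step: dividing $L_k(2x)-L_k(-2a)$ by $(x+a)$, identifying the quotient term with an element of $\mathrm{span}\{\phi_0,\ldots,\phi_{k-1}\}$, and invoking orthogonality once. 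This is shorter, avoids the two base cases, and makes completely transparent why the hypothesis $n\geq k$ is exactly what is needed; note only that the integrals require $a\notin\setR_{\leq 0}$ (as in the definition of $\alpha_{n,k}$) rather than the slightly weaker condition stated in the lemma, a point common to both proofs. What the paper's recursive manipulation buys in exchange is reusability: the same technique is invoked again in the sketch of Theorem \ref{thm:hankel_approx_higher}, where integrals with denominators $(a+x)^j$ are treated and a closed-form decomposition in the Laguerre basis is no longer as immediate, whereas your division trick would there leave a remainder still carrying a factor $(a+x)^{-(j-1)}$ and hence not vanish by orthogonality alone.
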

\begin{proof}
  We prove by induction in $k$. For $k=0$ we have
  \begin{align*}
    \beta_{n,0}(a)=\alpha_{n,1}(a)=\alpha_n(a)L_0(-2a),
  \end{align*}
  and for $k=1$, $n\geq 1$
  \begin{align*}
    \beta_{n,1}(a)&=\int_0^\infty\frac{\exp(-x)(1-2x)}{a+x}\phi_n(x)\,dx\\
    &=\alpha_{n,1}(a)-2\int_0^\infty\frac{x+a-a}{a+x}\exp(-x)\phi_n(x)\,dx\\
    &=\alpha_{n,1}(a)-\delta_{n,0}+2a\alpha_{n,1}(a)\\
    &=(2a+1)\alpha_{n,1}(a)=L_1(-2a)\alpha_{n,1}(a).
  \end{align*}
  For $n\geq k$ we use the recursion (\ref{eq:laguerre_rec}) and write
  \begin{align*}
    \beta_{n,k}(a)&=\int_0^\infty\frac{\phi_n(x)}{a+x}\left(\frac{2k-1-2x}{k}\phi_{k-1}(x)-\frac{k-1}{k}\phi_{k-2}(x)\right)\,dx=\\
    &=\frac{2k-1}{k}\beta_{n,k-1}(a)-\frac{k-1}{k}\beta_{n,k-2}(a)-\frac{2}{k}\int_0^\infty\frac{x+a-a}{a+x}\phi_n(x)\phi_{k-1}(x)\,dx\\
    &=\frac{2k-1}{k}\beta_{n,k-1}(a)-\frac{k-1}{k}\beta_{n,k-2}(a)+\frac{2a}{k}\beta_{n,k-1}(a)\\
    &=\alpha_{n,1}(a)\left(\frac{2k-1+2a}{k}L_{k-1}(-2a)-\frac{k-1}{k}L_{k-2}(-2a)\right)\\
    &=\alpha_{n,1}(a)L_k(-2a).
  \end{align*}
\end{proof}
Now we are able to bound the term $\varepsilon\of{N,a,b}$.
\begin{lem}
  \label{lem:diverror}
  For $a\in\setC\setminus\setR_{\leq 0}$, $b\in\setC, \Re\of b>0$ , there exists $C>0$ independent of $N$, such that
  $$\varepsilon\of{N,a,b}\leq C\exp\of{-2\Re\sqrt{2a(N+1)}}.$$
\end{lem}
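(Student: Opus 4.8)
The plan is to expand the relevant function in the generalized Laguerre basis, apply $I-\Pi_N$ term by term, and reduce the whole estimate to the asymptotics of $\alpha_{n,1}$ already available. First, Theorem~\ref{thm:1d_approx} gives the explicit form $\Pi_N\exp\of{-b\cdot}=\frac{2}{b+1}\sum_{k=0}^N t^k\phi_k$ with $t:=\frac{b-1}{b+1}$, and $\Re\of b>0$ is exactly the condition $|t|<1$. Since $a\notin\setR_{\leq 0}$ the map $x\mapsto(a+x)^{-1}$ is bounded on $\setR_{\geq 0}$, so $\phi_k/(a+\cdot)\in L_2\of{\setR_{\geq 0}}$ and, by completeness and orthogonality of the $\phi_n$ (Lemma~\ref{lem:prop_laguerre}(\ref{enum:laguerre_ortho})) together with the definition (\ref{eq:beta}) of $\beta_{n,k}$, $\frac{\phi_k(x)}{a+x}=2\sum_{n=0}^\infty\beta_{n,k}\of a\,\phi_n(x)$. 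Multiplying by $\frac{2}{b+1}t^k$, summing over $k\le N$, applying $I-\Pi_N$ (which kills all terms with $n\le N$), and then invoking Lemma~\ref{lem:beta} — which applies because every surviving index satisfies $n\ge N+1>N\ge k$, giving $\beta_{n,k}\of a=\alpha_{n,1}\of a\,L_k(-2a)$ — I obtain
\begin{align*}
  (I-\Pi_N)\frac{1}{a+\cdot}\Pi_N\exp\of{-b\cdot}=\frac{4}{b+1}\left(\sum_{k=0}^N t^k L_k(-2a)\right)\sum_{n=N+1}^\infty\alpha_{n,1}\of a\,\phi_n.
\end{align*}

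Next I would bound the two factors separately. The finite sum $P_N:=\sum_{k=0}^N t^k L_k(-2a)$ is bounded uniformly in $N$: since $|t|<1$, the series $\sum_{k\ge 0}L_k(-2a)t^k$ converges by Lemma~\ref{lem:prop_laguerre}(\ref{enum:laguerre_gen}), hence its partial sums are bounded, $|P_N|\le c_0$ with $c_0=c_0(a,b)$. Using orthogonality and $\|\phi_n\|_{L_2\of{\setR_{\geq 0}}}^2=\tfrac12$, this gives
\begin{align*}
  \varepsilon\of{N,a,b}=\left|\frac{4P_N}{b+1}\right|\left(\sum_{n=N+1}^\infty|\alpha_{n,1}\of a|^2\,\|\phi_n\|_{L_2\of{\setR_{\geq 0}}}^2\right)^{1/2}\le\frac{4c_0}{\sqrt2\,|b+1|}\left(\sum_{n=N+1}^\infty|\alpha_{n,1}\of a|^2\right)^{1/2}.
\end{align*}

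It remains to estimate the tail $\sum_{n>N}|\alpha_{n,1}\of a|^2$, and for this I would plug in Theorem~\ref{thm:alpha_asympt}. Absorbing its $\calO$-correction and the finitely many low-order terms into a single constant $c_1>0$ yields $|\alpha_{n,1}\of a|^2\le c_1\exp\of{-4\Re\sqrt{2a(n+1)}}/\sqrt{n+1}$ for all $n$, where $\Re\sqrt{2a}>0$ because $2a\notin\setR_{\leq 0}$. Since $\Re\sqrt{2a(n+1)}=\sqrt{n+1}\,\Re\sqrt{2a}$ and the map $n\mapsto\exp\of{-4\sqrt{n+1}\,\Re\sqrt{2a}}/\sqrt{n+1}$ is monotonically decreasing, I would bound the sum by the corresponding integral and reduce it, via a square-root substitution, to $\int_{\sqrt{N+1}}^\infty\exp\of{-4\Re\sqrt{2a}\,s}\,ds$, exactly as in the proof of Lemma~\ref{lem:hankel0_m1}, obtaining $\sum_{n>N}|\alpha_{n,1}\of a|^2\le\frac{c_1}{2\Re\sqrt{2a}}\exp\of{-4\Re\sqrt{2a(N+1)}}$. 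Taking the square root and combining with the previous display gives the claim with $C:=\frac{4c_0}{\sqrt2\,|b+1|}\left(\frac{c_1}{2\Re\sqrt{2a}}\right)^{1/2}$, which is independent of $N$.

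The single step that needs real attention is the uniform-in-$N$ bound on $P_N$; everything else is bookkeeping built on results already established. This bound is supplied cleanly by the generating-function identity of Lemma~\ref{lem:prop_laguerre}(\ref{enum:laguerre_gen}), whose hypothesis $|t|<1$ is equivalent to the standing assumption $\Re\of b>0$. A secondary technical point is that the estimate for $|\alpha_{n,1}\of a|^2$ must be valid for all $n\ge N+1$ rather than merely asymptotically, which is handled by enlarging $c_1$ so as to dominate the finitely many small-$n$ values as well.
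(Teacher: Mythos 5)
Your proposal is correct and follows essentially the same route as the paper's proof: expand $\Pi_N\exp\of{-b\cdot}$ in the Laguerre basis, use Lemma \ref{lem:beta} to factor $\beta_{n,k}\of a=\alpha_{n,1}\of a L_k\of{-2a}$ for the surviving indices $n>N\geq k$, bound the partial sum $\sum_{k=0}^N t^kL_k\of{-2a}$ uniformly via the generating function of Lemma \ref{lem:prop_laguerre}(\ref{enum:laguerre_gen}), and control the tail $\sum_{n>N}|\alpha_{n,1}\of a|^2$ with Theorem \ref{thm:alpha_asympt} and the integral comparison from Lemma \ref{lem:hankel0_m1}. Your explicit handling of the finitely many small-$n$ values of $\alpha_{n,1}$ and of the boundedness of $x\mapsto(a+x)^{-1}$ is slightly more careful than the paper's write-up, but the argument is the same.
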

\begin{proof}
  Since
  \begin{align*}
    \Pi_N\exp\of{-bx}=\frac{2}{b+1}\sum_{k=0}^N\left(\frac{b-1}{b+1}\right)^k\phi_k(x),
  \end{align*}
  we have
  \begin{align*}
    \varepsilon\of{N,a,b}^2&=\left\|\sum_{n=N+1}^\infty \frac{4}{b+1}\sum_{k=0}^N\left(\frac{b-1}{b+1}\right)^k\left(\phi_n,\frac{\phi_k}{\cdot+a}\right)_{L^2\of{\setR_{\geq 0}}}\phi_n\right\|^2_{L^2\of{\setR_{\geq 0}}}\\
    &=\left\|\sum_{n=N+1}^\infty\frac{4}{b+1}\sum_{k=0}^N\left(\frac{b-1}{b+1}\right)^k \beta_{n,k}(a)\phi_n\right\|^2_{L^2\of{\setR_{\geq 0}}}\\
    &=\frac{1}{2}\sum_{n=N+1}^\infty\left|\frac{4}{b+1}\alpha_{n,1}(a)\sum_{k=0}^N\left(\frac{b-1}{b+1}\right)^kL_k(-2a)\right|^2.
  \end{align*}
  Using the generating function of the Laguerre polynomials from Lemma \ref{lem:prop_laguerre} (\ref{enum:laguerre_gen})
  we have for some constant $C$
  \begin{align*}
    \sum_{k=0}^N\left(\frac{b-1}{b+1}L_k\of{-2a}\right)^k&\leq C\frac{1}{1-\frac{b-1}{b+1}}\exp\of{2a\frac{\frac{b-1}{b+1}}{1-\frac{b-1}{b+1}}}\\
    &=C\frac{b+1}{2}\exp\of{a(b-1)},
  \end{align*}
  and thus
  \begin{align*}
    \varepsilon\of{N,a,b}^2&\leq2C\sum_{n=N+1}^\infty\left|\alpha_{n,1}(a)\exp\of{a(b-1)}\right|^2.
  \end{align*}
  Substituting the asymptotic behavior of $\alpha_{n,1}$ and repeating the arguments of the proof of Lemma \ref{lem:hankel0_m1} we find for some constant $c\in\setR$
  \begin{align*}
    \sum_{n=N+1}^\infty\left|\alpha_{n,1}\of{a}\right|^2
    \leq c\left|\exp\of{2a}\frac{\pi}{\sqrt{2a}}\right|\frac{1}{2\Re\of{\sqrt{2a}}}\exp\of{-4\Re\of{\sqrt{2a(N+1)}}}.
  \end{align*}
  All in all this gives
  \begin{align*}
    \varepsilon\of{N,a,b}\leq \tilde C \frac{\exp\of{\Re\of{ab}}\sqrt\pi}{\sqrt{|2a|\Re\of{\sqrt{2a}}}}\exp\of{-2\Re\of{\sqrt{2a(N+1)}}}.
  \end{align*}
\end{proof}

Using Theorem \ref{thm:hankelz_split} and Lemma \ref{lem:diverror} we have proven 
\begin{thm}
  \label{thm:hankel0_final_error}
  For  $R>0$, and $\omega,\sigma\in\setC$, s.t. $\Im\of{\sigma\omega}>0$, we can bound the approximation error of the complex scaled zeroth spherical Hankel function by
  \begin{align*}
    \inf_{u_N\in \radmod X_N}\left\|h_0\of{\omega(R+\sigma x)}-u_N\right\|_{\setR_{\geq 0}}\leq c_1\left|\frac{1+i\sigma\omega}{1-i\sigma\omega}\right|^{N+1}+c_2\exp\of{-2\Re\sqrt{\frac{2R(N+1)}{\sigma}}},
  \end{align*}
  for some constants $c_1,c_2>0$ independent of $N$.
\end{thm}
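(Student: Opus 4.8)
The plan is to feed Theorem~\ref{thm:hankelz_split} directly into Lemma~\ref{lem:diverror}: between them the two error contributions have already been isolated, so only a combination and a check of hypotheses remain. First I would note that $h_0\of{\omega(R+\sigma x)}=h_0\of{\omega R+\omega\sigma x}$, so the left-hand side of the claim is precisely the best-approximation error treated in Theorem~\ref{thm:hankelz_split}, which gives constants $C_1,C_2>0$ independent of $N$ with
$$\inf_{u_N\in\tilde\X_N}\left\|h_0\of{\omega R+\omega\sigma\cdot}-u_N\right\|_{L_2\of{\setR_{\geq 0}}}\leq C_1\left|\frac{1+i\sigma\omega}{1-i\sigma\omega}\right|^{N+1}+C_2\,\varepsilon\of{N,\tfrac{R}{\sigma},-i\omega\sigma}.$$
The first summand is already in the required form, so only the term $\varepsilon\of{N,\tfrac{R}{\sigma},-i\omega\sigma}$ needs to be estimated.

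Next I would apply Lemma~\ref{lem:diverror} with $a=\tfrac{R}{\sigma}$ and $b=-i\omega\sigma$. The hypotheses to verify are $a\in\setC\setminus\setR_{\leq 0}$ and $\Re\of b>0$: the former holds because $R>0$ and, by the standing assumption $\Im\of\sigma>0$ on the scaling parameter, $R/\sigma$ is not a non-positive real (so $1/(a+\cdot)$ has no pole on $\setR_{\geq 0}$); the latter is $\Re\of{-i\omega\sigma}=\Im\of{\omega\sigma}>0$, which is exactly the hypothesis of the present theorem. Lemma~\ref{lem:diverror} then yields a constant $C>0$, independent of $N$, with
$$\varepsilon\of{N,\tfrac{R}{\sigma},-i\omega\sigma}\leq C\exp\of{-2\Re\sqrt{2a(N+1)}}=C\exp\of{-2\Re\sqrt{\tfrac{2R(N+1)}{\sigma}}}.$$
Substituting this into the split above and setting $c_1:=C_1$ and $c_2:=C_2C$ gives the stated bound.

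Since the genuine work — the asymptotics of $\alpha_{n,1}$ in Theorem~\ref{thm:alpha_asympt}, the closed form for $\beta_{n,k}$ in Lemma~\ref{lem:beta}, and the summation-to-integral comparison in the proof of Lemma~\ref{lem:diverror} — has already been carried out, there is no real obstacle remaining. The only points requiring care are bookkeeping ones: fixing the principal branch of the complex square root (so that $\Re\sqrt{2R/\sigma}>0$ and the second term genuinely decays like $\exp\of{-\mathrm{const}\,\sqrt{N}}$), and confirming $R/\sigma\notin\setR_{\leq 0}$ so that Lemma~\ref{lem:diverror} is applicable; the first term decays because $\Im\of{\sigma\omega}>0$ forces $\left|\tfrac{1+i\sigma\omega}{1-i\sigma\omega}\right|<1$.
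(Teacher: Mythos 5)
Your proposal is correct and is exactly the paper's own argument: the theorem is stated there as the immediate combination of Theorem~\ref{thm:hankelz_split} with Lemma~\ref{lem:diverror} applied to $a=\tfrac{R}{\sigma}$, $b=-i\omega\sigma$. Your additional checks (that $\Re\of{-i\omega\sigma}=\Im\of{\omega\sigma}>0$ and that $R/\sigma\notin\setR_{\leq 0}$ under the standing assumption $\Im\of\sigma>0$) are precisely the bookkeeping the paper leaves implicit.
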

Theorem \ref{thm:hankel0_final_error} shows, that the approximation error of $h_0\of{\omega R+\sigma\omega\cdot}$ by Laguerre functions can be split up into two parts: 
  \begin{enumerate}[(i)]
    \item{An exponentially decaying part similar to the error of the one dimensional problem, which is generated by the different exponential decay of the solution and the basis functions and}
    \item{a super algebraic part due to the fact that we approximate the rational part of $h_0$ by polynomials.}
\end{enumerate}
        
\subsection{Approximation of spherical Hankel functions with higher index}
Up to now we have only dealt with the approximation of the complex scaled Hankel function with index zero. For Hankel functions with higher indices similar bounds for the approximation error can be derived.

\begin{thm}
  \label{thm:hankel_approx_higher}
  The approximation error of the complex scaled spherical Hankel functions can be bounded by
    $$
  \inf_{u_N\in \radmod\X_N}\left\|h_\nu\of{\omega(R+\sigma\cdot)}-u_N\right\|_{\setR_{\geq 0}}\leq c_1\left|\frac{1+i\sigma\omega}{1-i\sigma\omega}\right|^{N+1}+c_2\exp\of{-2\Re\sqrt{\frac{2R(N+1)}{\sigma}}}(N+1)^\frac{\nu}{2},
  $$
  for some constants $c_1,c_2>0$ independent of $N$.
\end{thm}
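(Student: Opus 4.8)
The plan is to repeat the argument of Theorem~\ref{thm:hankel0_final_error} while keeping track of the extra negative powers of the radial variable that occur for higher indices. By Definition~\ref{defn:hankel}, writing $z=\omega(R+\sigma\radvar)=\omega R+\omega\sigma\radvar$ and $a:=R/\sigma$, one expands
\begin{align*}
  h_\nu\of{\omega(R+\sigma\radvar)}=\exp\of{i\omega\sigma\radvar}\sum_{k=0}^\nu c_k\,\frac{1}{(a+\radvar)^{k+1}},
\end{align*}
with coefficients $c_k=c_k(\omega,\sigma,R)$ independent of $N$ and $\radvar$: the prefactor $z^{-1}$ and the polynomial $\tilde h_\nu$ in $z^{-1}$ together produce the powers $(a+\radvar)^{-1},\dots,(a+\radvar)^{-(\nu+1)}$, and $a\in\setC\setminus\setR_{\leq 0}$ since $\Im\of\sigma>0$. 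Hence it suffices to bound $\|(I-\Pi_N)\exp\of{i\omega\sigma\cdot}(a+\cdot)^{-(k+1)}\|_{L_2\of{\setR_{\geq 0}}}$ for each $k\leq\nu$ and to observe that the $k=\nu$ contribution is the largest.

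Following the proof of Theorem~\ref{thm:hankelz_split}, I would insert the projection,
\begin{align*}
  (I-\Pi_N)\frac{\exp\of{i\omega\sigma\cdot}}{(a+\cdot)^{k+1}}
  =(I-\Pi_N)\frac{1}{(a+\cdot)^{k+1}}(I-\Pi_N)\exp\of{i\omega\sigma\cdot}
  +(I-\Pi_N)\frac{1}{(a+\cdot)^{k+1}}\Pi_N\exp\of{i\omega\sigma\cdot}.
\end{align*}
Since $x\mapsto(a+x)^{-(k+1)}$ is bounded on $\setR_{\geq 0}$, the first summand is controlled by $\|(I-\Pi_N)\exp\of{i\omega\sigma\cdot}\|_{L_2\of{\setR_{\geq 0}}}$, which by Corollary~\ref{cor:1d_approx} and Remark~\ref{rmk:1d_conv} is $\leq c_1\left|\frac{1+i\sigma\omega}{1-i\sigma\omega}\right|^{N+1}$; this yields the first term of the claimed bound. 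The second summand is the natural generalisation of $\varepsilon\of{N,a,-i\omega\sigma}$, and all that is needed is a bound for $\varepsilon_k\of{N,a,b}:=\|(I-\Pi_N)(a+\cdot)^{-(k+1)}\Pi_N\exp\of{-b\cdot}\|_{L_2\of{\setR_{\geq 0}}}$ with $b=-i\omega\sigma$, $\Re\of b>0$.

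To estimate $\varepsilon_k$ I would argue as in Lemmas~\ref{lem:beta} and~\ref{lem:diverror}. Using $(a+x)^{-(k+1)}=\frac{(-1)^k}{k!}\partial_a^k(a+x)^{-1}$, Lemma~\ref{lem:beta} gives $\int_0^\infty\frac{\phi_n\phi_j}{(a+x)^{k+1}}\,dx=\frac{(-1)^k}{k!}\partial_a^k\of{\alpha_{n,1}(a)L_j(-2a)}$. Expanding by the Leibniz rule, using $\partial_a^l\alpha_{n,1}(a)=(-1)^l l!\,\alpha_{n,l+1}(a)$ and that the $a$-derivatives of $L_j(-2a)$ are, by Lemma~\ref{lem:prop_laguerre}(\ref{enum:laguerre_diff}), generalized Laguerre polynomials evaluated at $-2a$, and then summing over $j=0,\dots,N$ against the weights $\left(\frac{b-1}{b+1}\right)^j$ (with $\left|\frac{b-1}{b+1}\right|<1$) via the generating function of Lemma~\ref{lem:prop_laguerre}(\ref{enum:laguerre_gen}) and its generalized analogue $\sum_j L_{j,m}(x)t^j=\exp\of{-\frac{tx}{1-t}}(1-t)^{-m-1}$, one obtains factors bounded uniformly in $N$. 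The dominant term in $n$ being the one carrying $\alpha_{n,k+1}(a)$, this leaves, exactly as in Lemma~\ref{lem:diverror}, $\varepsilon_k\of{N,a,b}^2\leq C\sum_{n=N+1}^\infty|\alpha_{n,k+1}(a)|^2$.

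The remaining point — and the main obstacle — is the asymptotics of $\alpha_{n,k+1}(a)$ as $n\to\infty$. Differentiating the expansion of Theorem~\ref{thm:alpha_asympt} $k$ times in $a$, the dominant contribution comes from differentiating the exponent $a-2\sqrt{2a(n+1)}$, each differentiation producing a factor $\sim-\sqrt{2(n+1)/a}$, so that $\alpha_{n,k+1}(a)=\mathcal O\of{(n+1)^{k/2}\,(n+1)^{-1/4}\exp\of{-2\Re\sqrt{2a(n+1)}}}$. Making this rigorous requires either differentiability of the $\mathcal O\of{1/\sqrt{n+1}}$ remainder in Theorem~\ref{thm:alpha_asympt} in $a$, uniformly near the relevant value, or equivalently the uniform two-parameter asymptotics of the confluent hypergeometric function $U(n+1,1-k,2a)$, to which $\alpha_{n,k+1}(a)=\frac{(n+k)!}{k!}a^{-k}U(n+1,1-k,2a)$ is related (via the Rodriguez formula Lemma~\ref{lem:prop_laguerre}(\ref{enum:laguerre_rodriguez}) and $n$-fold integration by parts). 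Granting it, $\sum_{n=N+1}^\infty|\alpha_{n,\nu+1}(a)|^2\leq C\sum_{n=N+1}^\infty(n+1)^{\nu-\frac12}\exp\of{-4\Re\sqrt{2a(n+1)}}$; bounding the eventually decreasing sum by an integral and substituting $s=\sqrt{n+1}$ as in Lemma~\ref{lem:hankel0_m1} gives $\varepsilon_\nu\of{N,a,b}\leq C(N+1)^{\nu/2}\exp\of{-2\Re\sqrt{2a(N+1)}}$. With $a=R/\sigma$ this is $c_2(N+1)^{\nu/2}\exp\of{-2\Re\sqrt{2R(N+1)/\sigma}}$, and adding the two contributions over $k=0,\dots,\nu$ (the $k=\nu$ one dominating) completes the proof.
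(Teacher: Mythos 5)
Your proposal follows essentially the same route as the paper's own (very brief) proof sketch: decompose $h_\nu$ into $\exp\of{i\omega\sigma\cdot}$ times powers $(a+\cdot)^{-j}$ with $a=R/\sigma$, split with $\Pi_N$ as in Theorem \ref{thm:hankelz_split} to get the exponential term from Corollary \ref{cor:1d_approx}, and control the remainder by generalizing Lemma \ref{lem:beta} and the asymptotics of Theorem \ref{thm:alpha_asympt} to $\alpha_{n,k}$ (for which the paper likewise only asserts the bound $|\alpha_{n,k}(a)|\leq C_{k,a}\left|\exp\of{-2\sqrt{2a(n+1)}}\right|(n+1)^{-\frac{3}{4}+\frac{k}{2}}$ without proof), then repeat Lemma \ref{lem:diverror}. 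The point you flag as needing rigor — uniform asymptotics of $\alpha_{n,k+1}(a)=\frac{(n+k)!}{k!}a^{-k}U(n+1,1-k,2a)$ — is exactly the step the paper also leaves at sketch level, so your attempt is consistent with, and if anything slightly more explicit than, the published argument.
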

\begin{proof}
  We skip the technical details and give just a short sketch of the proof. Similar to Lemma \ref{lem:beta} one can show that
    $$\int_0^\infty \frac{1}{(a+\radvar)^j}\phi_n(\radvar)\phi_k(\radvar)\,d\radvar=\alpha_{n,1}(a)L_k\of{-2a}-\frac{2}{k}\alpha_{n,j-1}(a).$$
    Moreover similar to Theorem \ref{thm:alpha_asympt} one can show an asymptotic behavior of 
     $\alpha_{n,k}$ of the form
  $$|\alpha_{n,k}(a)|\leq C_{k,a}\left|\exp\of{-2\sqrt{2a(n+1)}}\right|(n+1)^{-\frac{3}{4}+\frac{k}{2}},\quad n\to\infty.$$
  Using this and similar ideas as in the proofs of Theorem \ref{thm:hankelz_split} and Lemma \ref{lem:diverror} leads to the desired result. 
\end{proof}

  \subsection{General approximation results}
  In the previous sections we have derived approximation results for the spherical Hankel functions with fixed index. Since due to (\ref{eq:radiation_cond}) our solutions might consist of infinite sums over all spherical Hankel functions for a complete analysis results dealing with the uniformity of the coefficients $\beta_{\nu,j}$ from (\ref{eq:radiation_cond}) are missing.

\section{Numerical Experiments}
\label{sec:numerics}

In the following we illustrate our theoretical findings from the previous sections by numerical examples. All numerical examples were computed using the high order finite element software NGSolve \cite{ngsolve:14} and the mesh generator Netgen \cite{netgen}.

  \subsection{Approximation of known solutions of the Helmholtz equation}
  First, we compare our theoretical results with numerical ones in situations, where exact solutions are known. The aim of this subsection is to highlight the dependency of the error on the different parameters. We start with the approximation of the Hankel function $h_0$.

Please note, that some of the following figures exhibit horizontal axes with units that scale like square roots. This helps visualizing the super algebraic decay of some of the error terms.

Figure \ref{fig:hankel_approx} shows the approximation error of the complex scaled zeroth Hankel function $h_0$
$$\text{error}\of N=\left\|(I-\Pi_N)h_0(\omega R+\omega\sigma\cdot)\right\|_{L_2\of{\setR_{\geq 0}}}.$$
The error was computed using numerical integration.
The results coincide nicely with the theoretical results from Theorem \ref{thm:hankel0_final_error}. 
In Figure \ref{fig:hankel_approx_exp} the parameters are chosen such that the error is dominated by the exponential term. We can observe that the exponential decay depends only on the parameters $\sigma$ and $\omega$ and is independent of $R$. Figure \ref{fig:hankel_approx_sqrt} shows approximation errors in a regime where the super algebraic part of the error dominates. The decay here is independent of the frequency $\omega$.

\begin{figure}[H]
  \begin{subfigure}{\textwidth}
    \centering
  \includegraphics{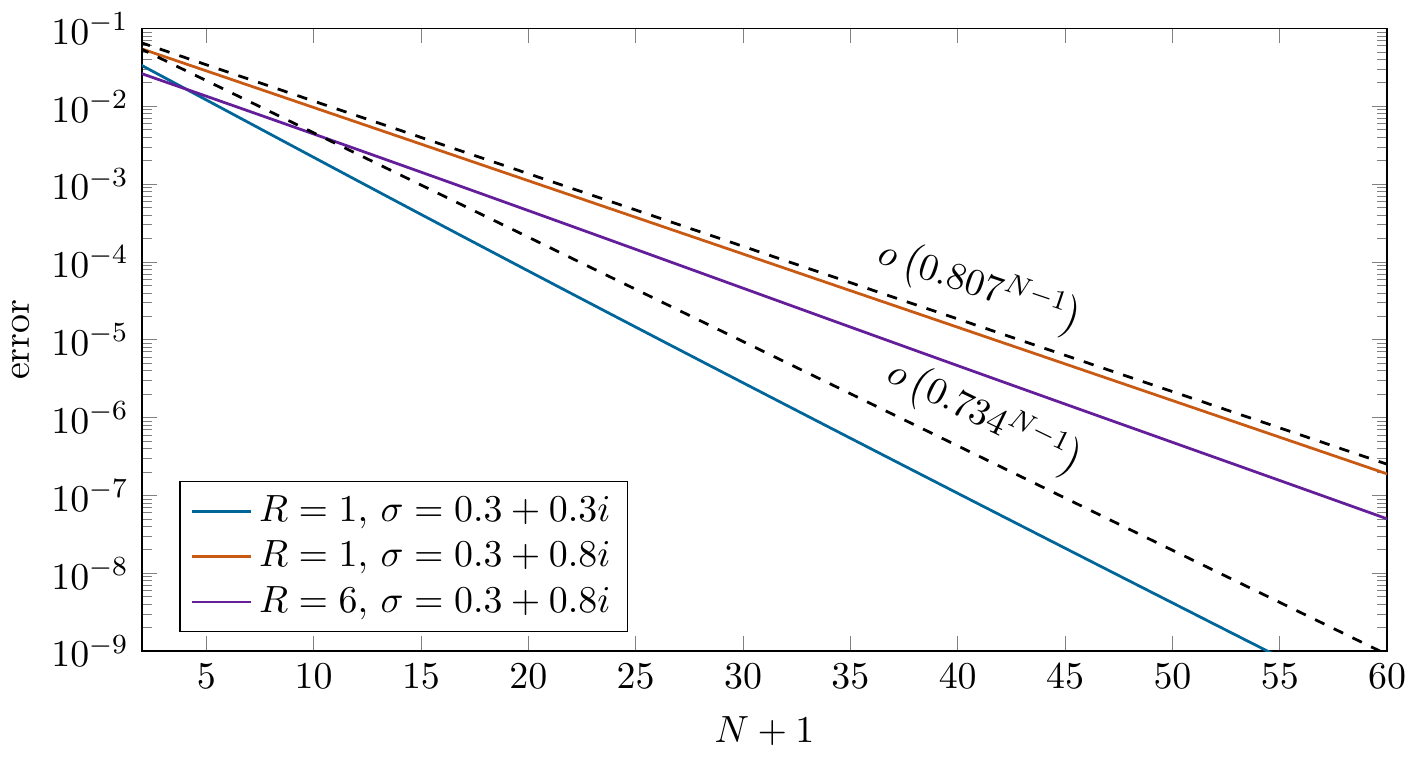}
  \caption{Exponential convergence for $\omega=10-0.5i$ and varying $R$ and $\sigma$. }
  \label{fig:hankel_approx_exp}
  \end{subfigure}
  \begin{subfigure}{\textwidth}
    \centering
    \includegraphics{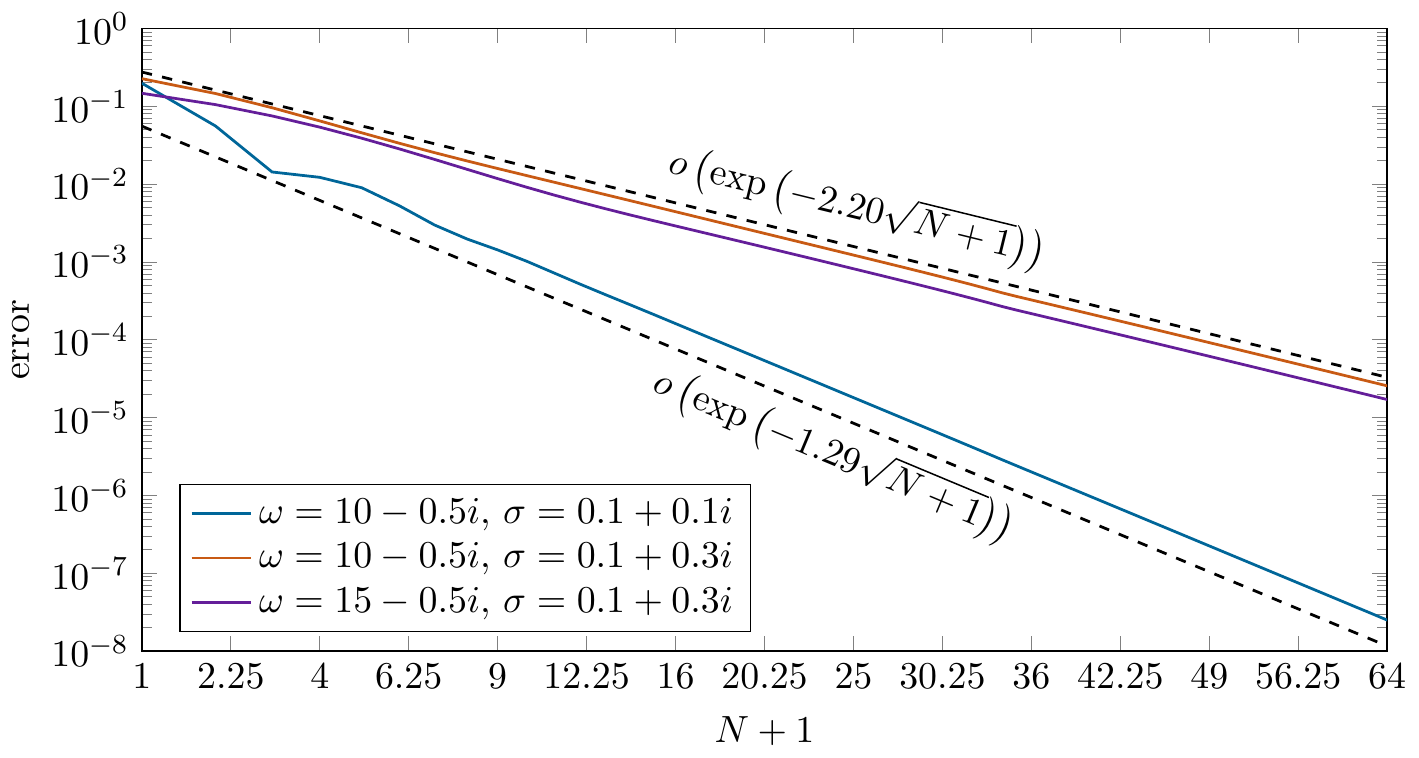}
    \caption{Super algebraic convergence for $R=0.1$ and varying parameters $\omega$ and $\sigma$. Note that the units on the horizontal axis scale like square roots.}
    \label{fig:hankel_approx_sqrt}
  \end{subfigure}
  \caption{Approximation error of $h_0(\omega(R+\sigma\cdot))$ for varying parameters. The dashed lines mark the predicted convergence rates from Section \ref{sec:convergence}.}
  \label{fig:hankel_approx}
\end{figure}
Figure \ref{fig:hankel_inds} shows the approximation of Hankel functions with different indices and exhibits the predicted behavior from Theorem \ref{thm:hankel_approx_higher}. Again we chose parameters such that the exponential error and the super algebraic error dominates in Figure \ref{fig:hankel_inds_exp} and Figure \ref{fig:hankel_inds_sqrt} respectively. The experiments also show that the exponential decay is independent of the index of the Hankel function $\nu$.

Figure \ref{fig:condition_numbers} shows the condition numbers of the discretization matrices of the bilinear forms
$$\tilde s^\sigma+\lambda_\nu\tilde m_0^\sigma-\omega^2\tilde m_1^\sigma,$$
with respect to different infinite element orders $N$. These matrices correspond to discretizations of the spherical Bessel equations with index $\lambda_\nu=\nu(\nu+1)$ (cf. Section \ref{sec:example}). The condition numbers grow slower than $o\of{N^3}$. For $60$ degrees of freedom the condition number is about $10^{5}$. But since the best approximation error decays at least super algebraically, the mild grow in the condition number is dominated by the fast convergence of the approximation error (cf. Figure \ref{fig:hankel_approx}).
\begin{figure}[H]
  \begin{subfigure}{\textwidth}
    \centering
    \includegraphics[width=0.8\textwidth]{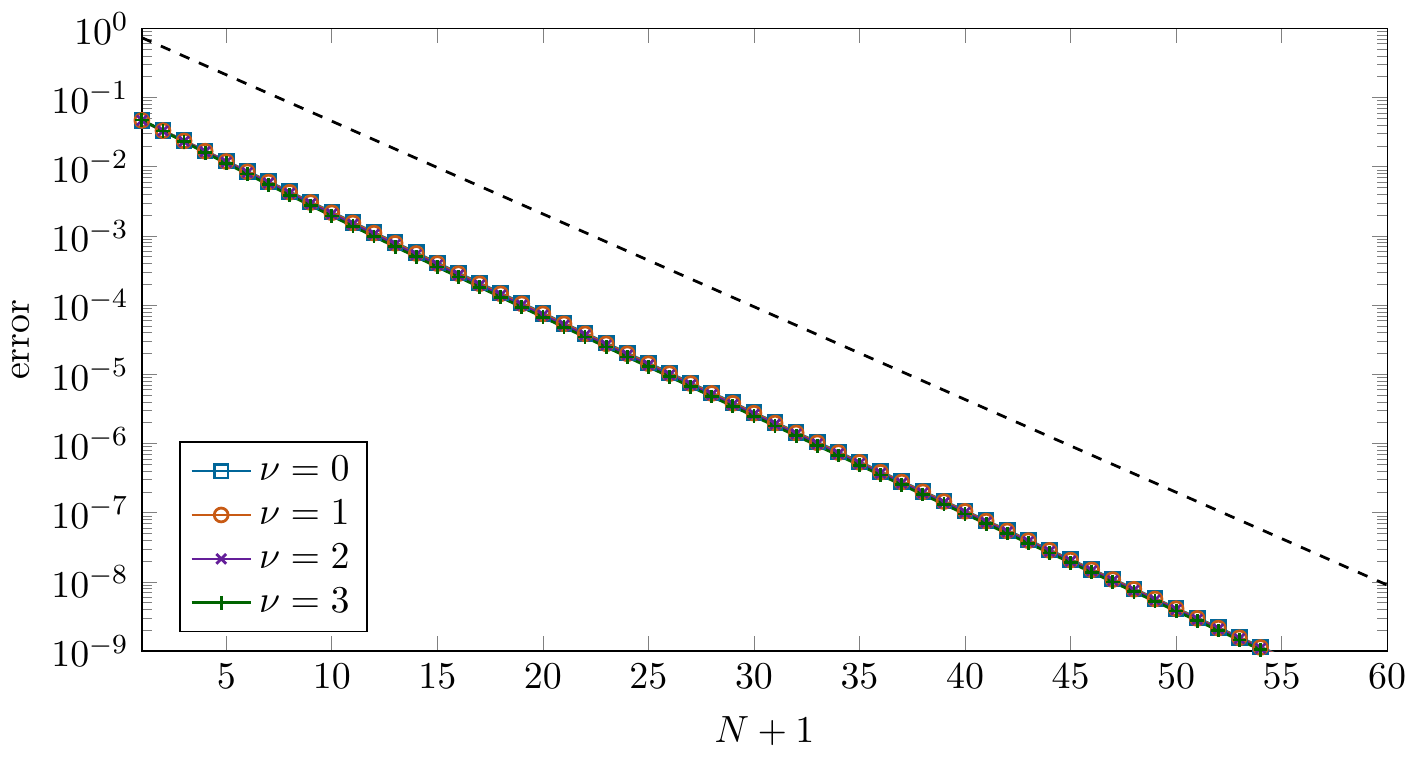}
    \caption{Exponential convergence for $\omega=10-0.5i$, $\sigma=0.3+0.3i$ and $R=1$.}
    \label{fig:hankel_inds_exp}
  \end{subfigure}
  \begin{subfigure}{\textwidth}
    \centering
    \includegraphics[width=0.8\textwidth]{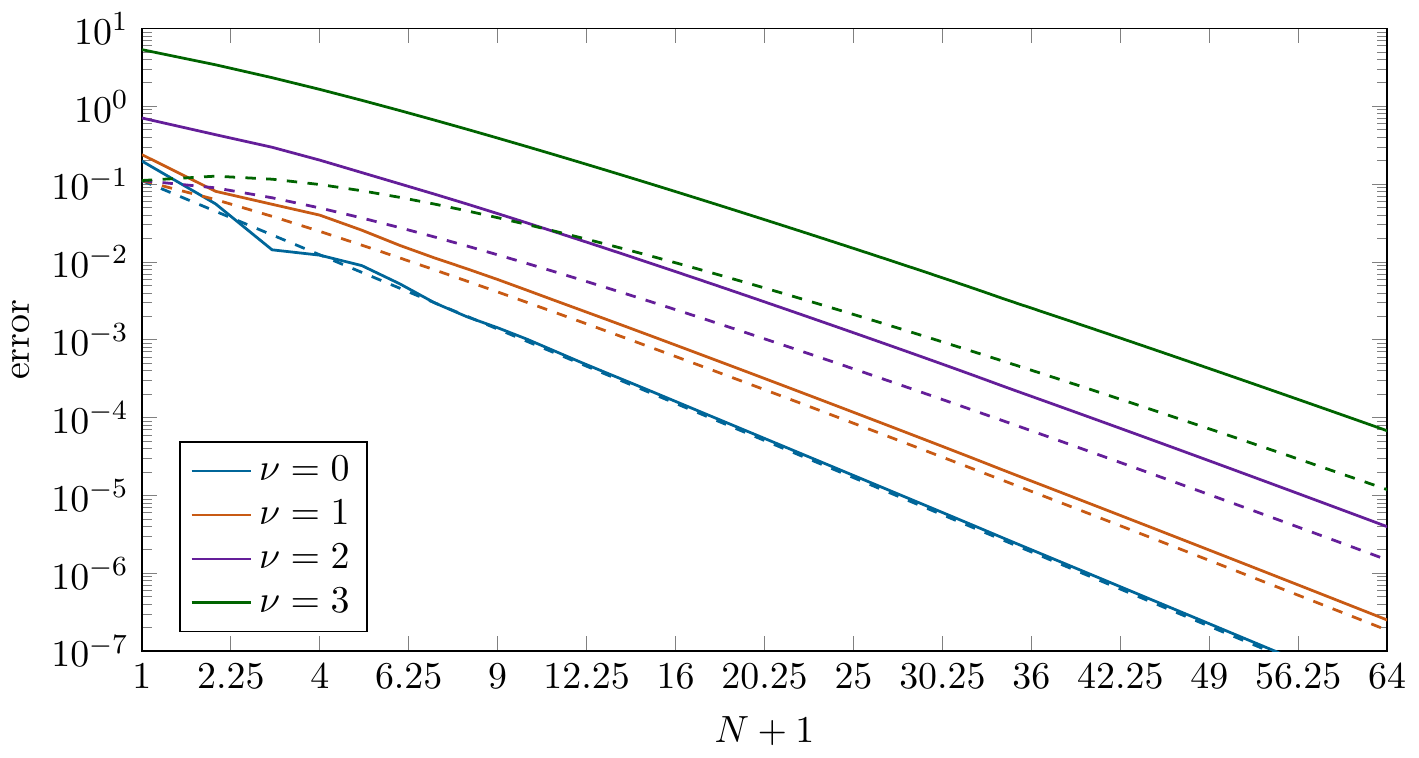}
    \caption{Super algebraic convergence for $\omega=10-0.5i$, $\sigma=0.1+0.1i$ and $R=0.1$. Note that the units on the horizontal axis scale like square roots.}
    \label{fig:hankel_inds_sqrt}
  \end{subfigure}
  \caption{Approximation error of $h_\nu(\omega(R+i\sigma\cdot))$. The dashed lines mark the predicted convergence rates from Section \ref{sec:convergence}.}
  \label{fig:hankel_inds}
\end{figure}

Figure \ref{fig:convergence_sqrt} shows the convergence in the number of unknowns for one selected eigenvalue of the separated problem (cf. Subsection \ref{sec:example}). Again we chose parameters such that the super algebraic part of the approximation error dominates. The expected rate of convergence for the eigenvalues is the approximation error squared (\cite{BabuskaOsborn}). The results in Figure \ref{fig:convergence} show exactly this behavior. 

In Figure \ref{fig:convergence_r} we added an interior domain to the problem. Since the super algebraic part of the error declines faster, the exponential error dominates here. The faster convergence in the plotted region comes with the price of more unknowns due to the additional interior domain.


\begin{figure}[H]
    \centering
  \includegraphics{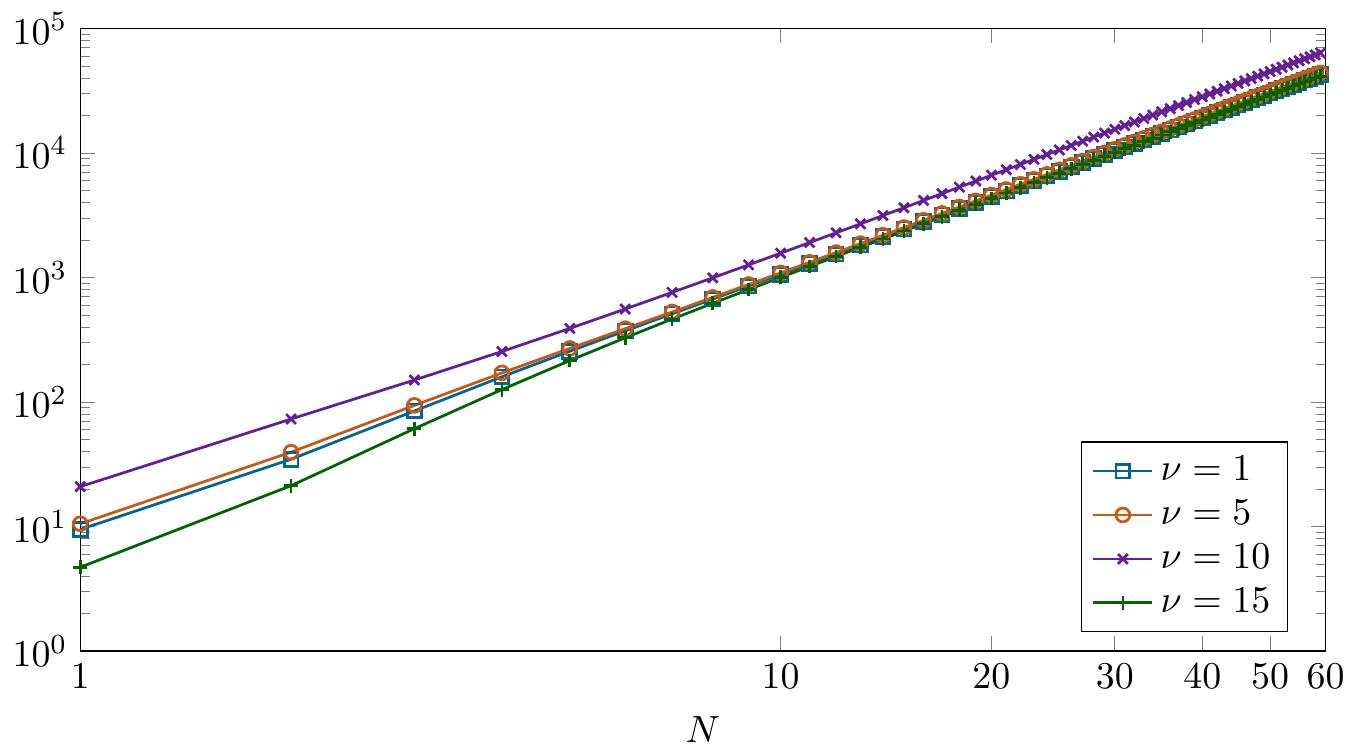}
  \caption{Condition numbers of the discretization matrices of $\tilde s^\sigma+\lambda_\nu\tilde m_0^\sigma-\omega^2\tilde m_1^\sigma$ with parameters $\sigma=0.3+0.3i$, $\omega=10-0.5i$, $R=1$}
  \label{fig:condition_numbers}
\end{figure}

Overall, the numerical results agree perfectly with the theoretical ones in situations, where the analytical solution is known. Note, that we have not chosen optimal parameters for the numerical tests. A kind of optimal complex scaling would be the choice $\sigma=\iu/\omega$ (see Subsection \ref{sec:BestAppr1d} and \cite{nw2018}). 

The best choice of the interface (the parameter $R$) is not so obvious. Small values of $R$ reduce the rate of convergence of the super algebraic part of the error. On the other hand, the computational costs increase for larger interior domains (i.e. for large values of $R$). Moreover, discretization errors in the interior domain become more and more dominant if the interior domain is large, since resonance functions are typically exponentially increasing in the interior domain (cf. the results in \cite{nw2018}). Hence, we propose to choose rather  small interior domains and optimize the approximation error in the exterior domain by choosing appropriate parameters $\sigma$ and $N$. 
\begin{figure}[H]
  \centering
  \captionsetup[subfigure]{justification=centering}
  \begin{subfigure}[t]{0.48\textwidth}
    \centering
  \includegraphics{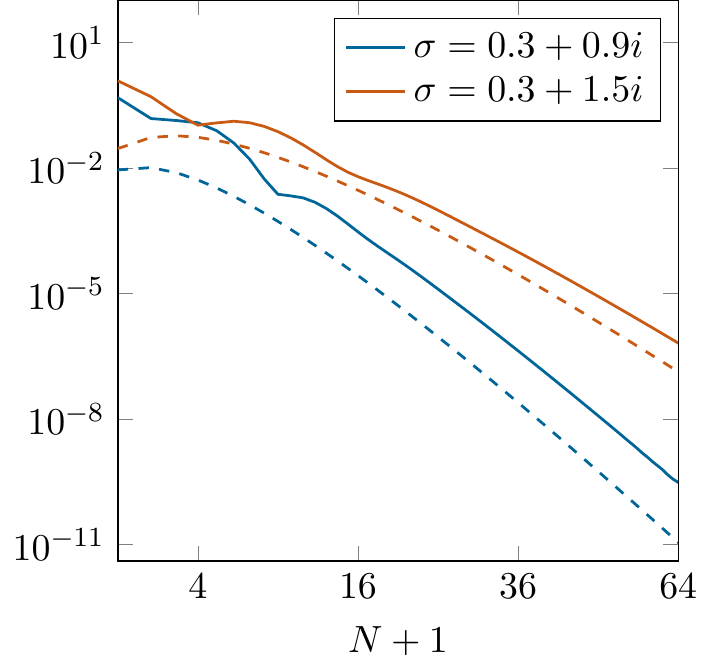}
  \caption{Super algebraic convergence for $R=1$ and varying $\sigma$.}
  \label{fig:convergence_sqrt}
  \end{subfigure}
  \begin{subfigure}[t]{0.48\textwidth}
  \includegraphics{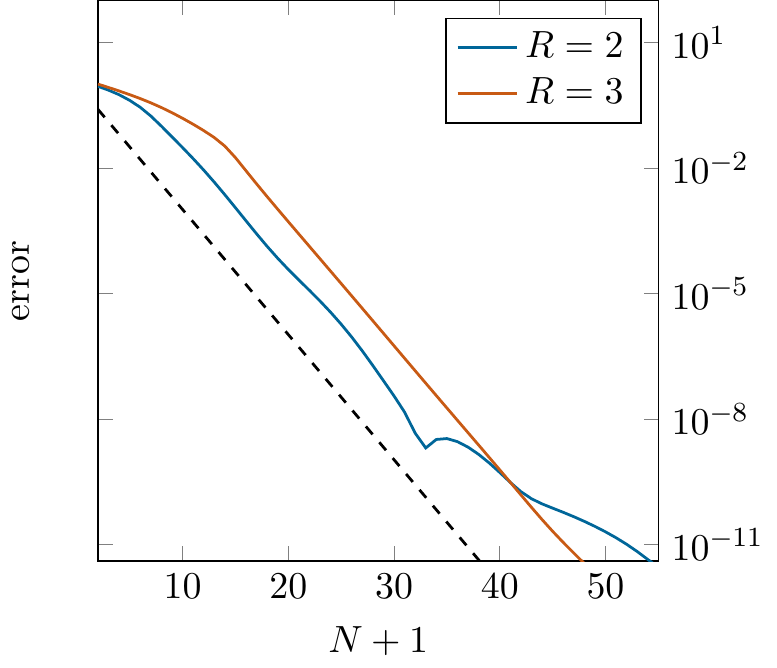}
    \centering
    \caption{Exponential convergence for $\sigma=0.3+1.5i$ and varying $R$.}
  \label{fig:convergence_r}
  \end{subfigure}
  \caption{Errors of the eigenvalue $\omega\approx2.903916-1.201866i$ obtained by solving the separated problem for $\nu=3$. The dashed lines mark the squared exponential and super algebraic convergence rates from Section \ref{sec:convergence} respectively. Note the different scalings on the horizontal axes.}
  \label{fig:convergence}
\end{figure}

\subsection{Computational costs}
\label{sec:performance}
In this subsection we compare the computational costs of our infinite elements and a conventional PML by approximating the resonances of the Helmholtz equation on $$\Oe:=\domain:=\setR^3\setminus B_1(0)=\left\{\varx\in\setR^3:\|\varx\|>1\right\}.$$

All computations in this section were done on a desktop computer with an Intel i3 CPU with 2x3.5GHz and 16GiB memory. The eigenvalues were calculated using a shift-and-invert Arnoldi algorithm (cf. \cite{Saad}) and a direct inverse via a Cholesky factorization for complex symmetric matrices. All given times are for the factorization of the given system matrix only, since this is the main contribution to the overall computational costs.

Figure \ref{fig:perf} shows the error plotted against factorization times for infinite elements and a PML using the same tensor product method described in Section \ref{sec:tp_ext_disc} but with one dimensional high order finite element basis functions in radial direction on an interval $[0,T]$. We applied $h$-refinement to obtain a succession of discretizations.
In Figures \ref{fig:perf_T_5_i_0} and \ref{fig:perf_T_5_i_9} the error generated by the truncation of the exterior domain can be observed at approximately $10^{-3}$. In Figures \ref{fig:perf_T_5_i_9} and \ref{fig:perf_T_8_i_9} the infinite elements already reach the error generated by the surface discretization which is approximately $10^{-7}$. All experiments show that the infinite elements are clearly superior to the used PML discretizations with respect to computational efficiency. Note, that due to the fact that we used the tensor product ansatz also for the PML discretizations this version of PML is already more efficient than a typical PML based on an unstructured exterior mesh.

The largest problems in the examples above had $99425$ degrees of freedom ($24$ in radial direction). Factorizing the inverse took up about 7GiB of memory.

\begin{figure}[H]
\centering
  \begin{subfigure}[t]{0.45\textwidth}
    \centering
  \includegraphics{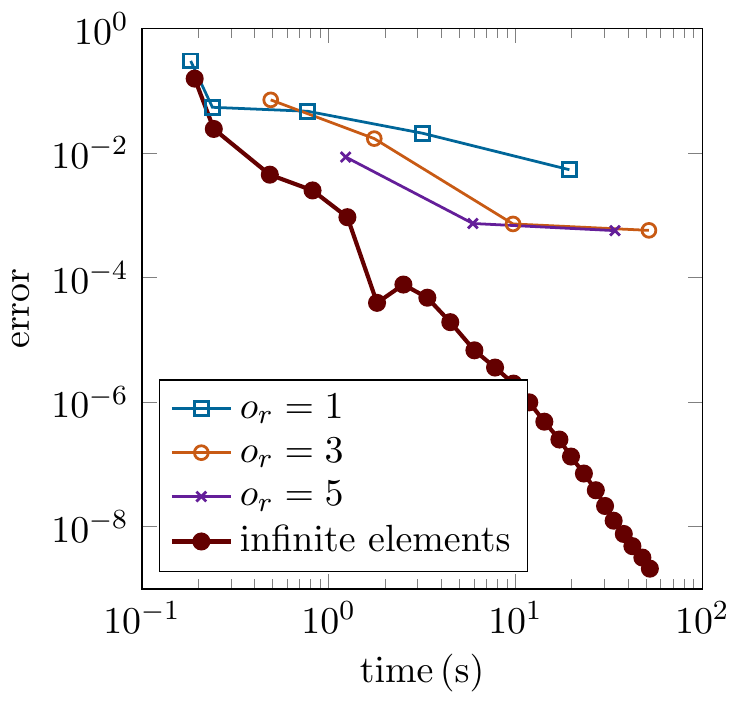}
  \caption{$T=5$,\\ $\omega\approx2.90391653245-1.20186645975i$}
  \label{fig:perf_T_5_i_0}
  \end{subfigure}
  \begin{subfigure}[t]{0.45\textwidth}
    \centering
  \includegraphics{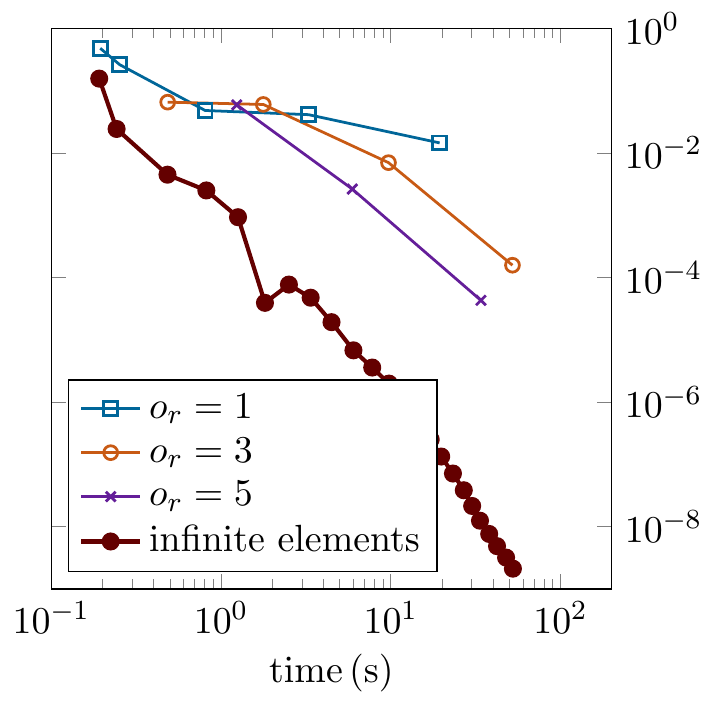}
  \caption{$T=8$,\\ $\omega\approx2.90391653245-1.20186645975i$}
  \label{fig:perf_T_8_i_0}
  \end{subfigure}\\
  \begin{subfigure}[t]{0.45\textwidth}
    \centering
  \includegraphics{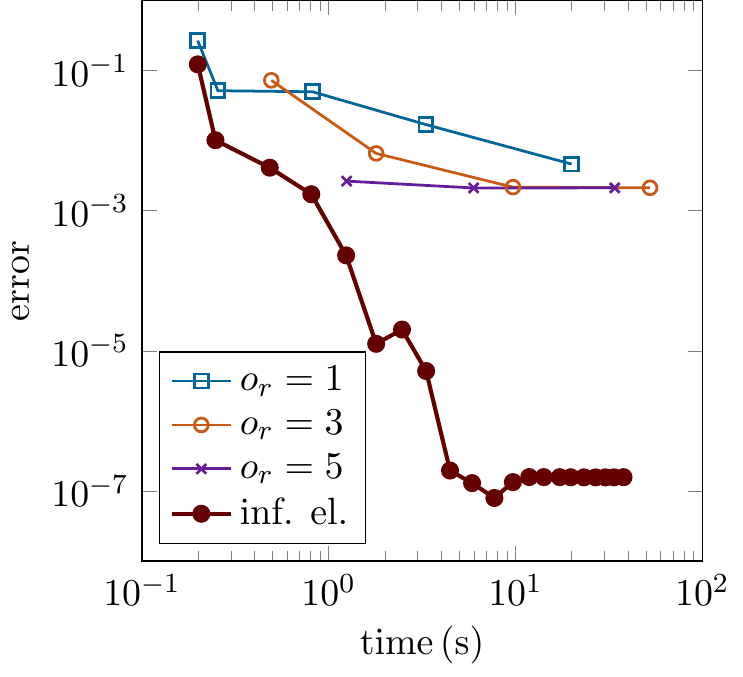}
  \caption{$T=5$,\\ $\omega\approx5.77658328455	-1.41788771722i$}
  \label{fig:perf_T_5_i_9}
  \end{subfigure}
  \begin{subfigure}[t]{0.45\textwidth}
    \centering
  \includegraphics{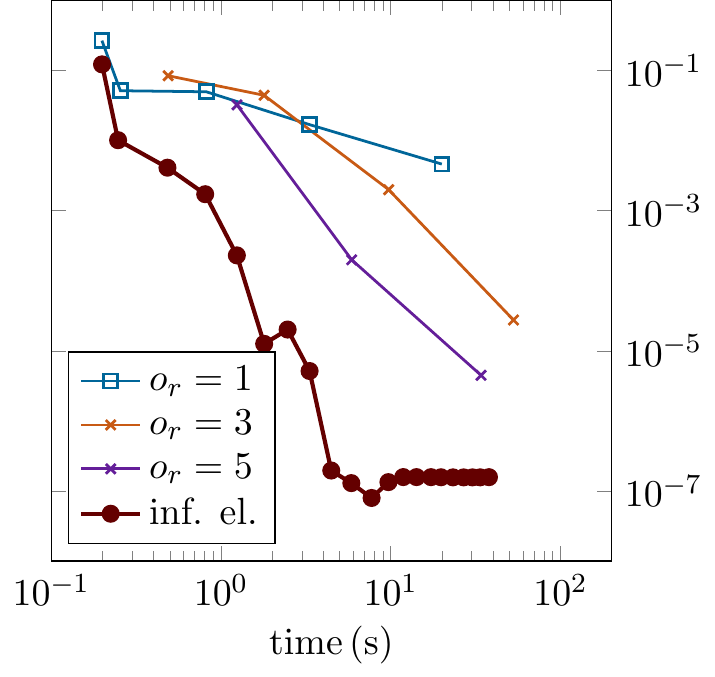}
  \caption{$T=8$,\\ $\omega\approx5.77658328455	-1.41788771722i$}
  \label{fig:perf_T_8_i_9}
  \end{subfigure}
  \caption{Comparison of errors against factorization times for eigenvalues $\omega$ and a surface discretization with order $o=5$ and meshsize $h=0.3$, $\sigma=\frac{1+i}{\omega}$ and different exterior discretizations. We use PMLs with truncation at $T$ and radial elements of order $o_r$.}
  \label{fig:perf}
\end{figure}

\subsection{An example with inhomogeneous exterior}
\label{sec:example}

In this subsection we approximate the resonances of the Helmholtz equation on $\Oe:=\domain:=\setR^3\setminus B_1(0)=\left\{\varx\in\setR^3:\|\varx\|>1\right\}$ for a potential 
$$p\of{\varx\of{\radvar,\bdvar}}:=(1+\hat\epsilon \hat p\of\bdvar)(1+\tilde\epsilon\tilde p\of\radvar),$$
with functions $\hat p:\bdi\to\setR$, $\tilde p:\setR_{\geq 0}\to\setR$. Note, that such an example is not covered by our approximation results of Section \ref{sec:convergence}. Nevertheless since our infinite elements allow the use of numerical integration the application of infinite elements to examples with inhomogeneous exterior suggests itself.

If we assume $\hat\epsilon$ to be zero, the equation can be separated using an ansatz
$$\sol\of{\varx\of{\radvar,\bdvar}}:=\radsol_{\nu}\of\radvar Y_{\nu,j}\of\bdvar.$$
Since the spherical harmonics $Y_{\nu,j}$ are eigenfunctions of the surface Laplacian and the according bilinear form $\hat s$ with the corresponding eigenvalues $\nu(\nu+1)$, this ansatz, combined with complex scaling as before leads to the set of one dimensional eigenvalue problems
$$\tilde s^\sigma\of{\radsol,\radtestf}+\nu(\nu+1)\tilde m_0^\sigma\of{\radsol,\radtestf}=\omega^2\tilde m_1^\sigma\of{(1+\tilde\epsilon)\tilde p\radsol,\radtestf},\quad \nu\in\setN_0.$$
All three dimensional experiments use the parameters $N=50$, finite element mesh size $h=0.3$ and polynomial order $p=4$. For the one dimensional examples we used $N=200$.

\begin{figure}[H]
  \centering
  \captionsetup[subfigure]{justification=centering}
  \begin{subfigure}[t]{0.3\textwidth}
    \includegraphics{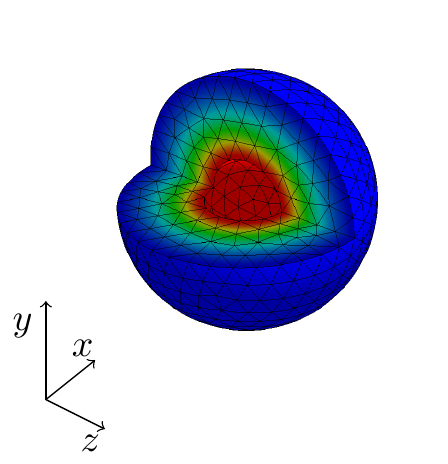}
    \caption{$1+1.5\radmod p$,\\ with color coding from $1$ (blue) to $2$ (red).}
    \label{fig:pottilde}
  \end{subfigure}
  \begin{subfigure}[t]{0.3\textwidth}
    \includegraphics{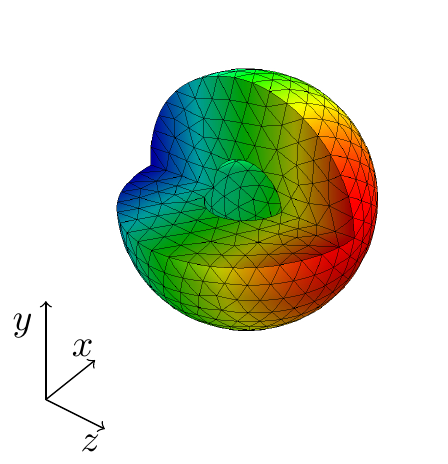}
    \caption{$1+0.5\bdmod p$,\\ with color coding from $0.5$ (blue) to $1.5$ (red).}
    \label{fig:pothat}
  \end{subfigure}
  \begin{subfigure}[t]{0.3\textwidth}
    \includegraphics{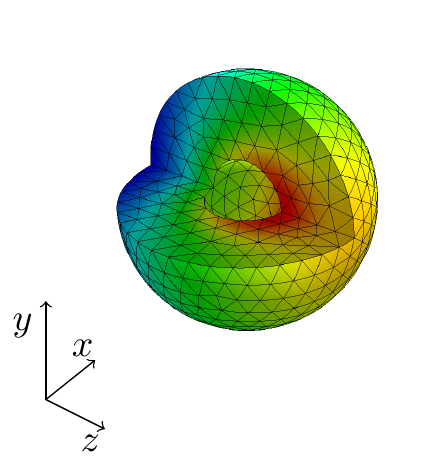}
    \caption{$(1+1.5\radmod p)(1+0.5\bdmod p)$,\\ with color coding from $0.5$ (blue) to $2$ (red).}
    \label{fig:pot}
  \end{subfigure}
  \caption{Potential functions}
  \label{fig:pots}
\end{figure}
Figure \ref{fig:hh_r_perturbed} shows the eigenvalues of the discretized separated problem for 
$$\tilde p\of\radvar:=\frac{(\radvar-1)^2}{1+(\radvar-1)^4},$$
(cf. Figure \ref{fig:pottilde}), $\nu=0,\ldots,5$ and different choices of $\tilde\epsilon$, as well as the eigenvalues of the full three dimensional simulation. For $\radmod\epsilon=0$ the eigenvalues can be calculated exactly by finding the roots of $h_\nu'$, the derivative of the spherical Hankel function of first kind of order $\nu$. For larger values of $\tilde\epsilon$ the eigenvalues move closer to the real axis. The approximated eigenvalues of the full three dimensional simulation show a good agreement with the ones of the separated problem. The resonances located close to the negative imaginary axis in Figures \ref{fig:hh_r_perturbed} and \ref{fig:hh_perturbed} are part of the discretization of the essential spectrum (cf. \cite{ColMonk,nw2018,KimPasciak:09}).

\begin{figure}[H]
  \centering
  \includegraphics[width=0.8\textwidth]{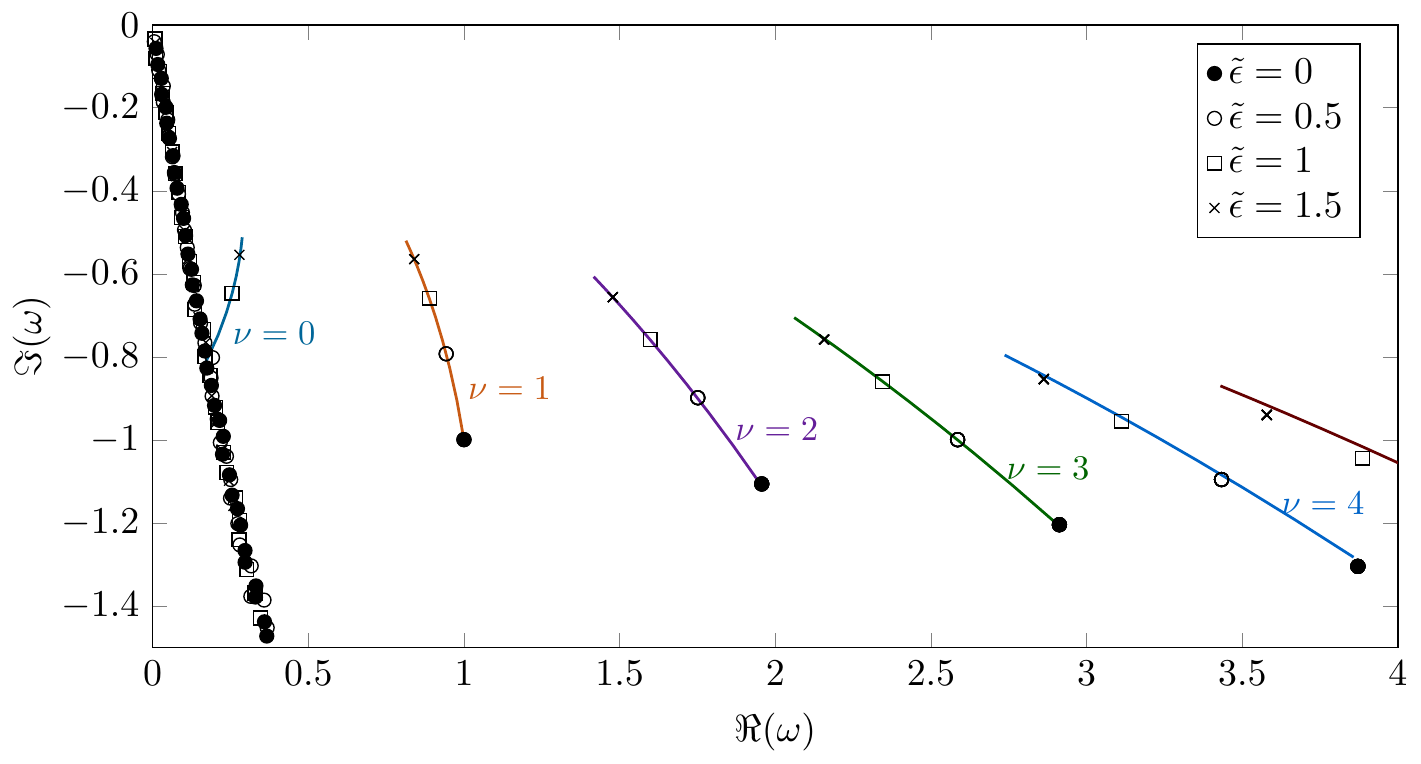}
  \caption{Inhomogeneous exterior problem with radial inhomogeneity. The lines mark the locations of resonances for a given Hankel index $\nu$ and varying $\tilde\epsilon$, obtained by solving the separated problem whereas the marks are eigenvalues calculated by solving the full three dimensional problem.}
  \label{fig:hh_r_perturbed}
\end{figure}

Figure \ref{fig:hh_perturbed} shows resonances of the same problem with an additional potential
  $$\bdmod p\of{\bdvar}:=\hat p\of{x,y,z}:=z,$$
(cf. Figures \ref{fig:pothat} and \ref{fig:pot}) and varying values of $\hat\epsilon$.  This problem is not separable any more, thus only a three dimensional simulation is possible. Due to the disturbed symmetry, the multiple eigenvalues fan out. 

\begin{figure}[H]
\centering
  \begin{subfigure}{0.45\textwidth}
  \includegraphics{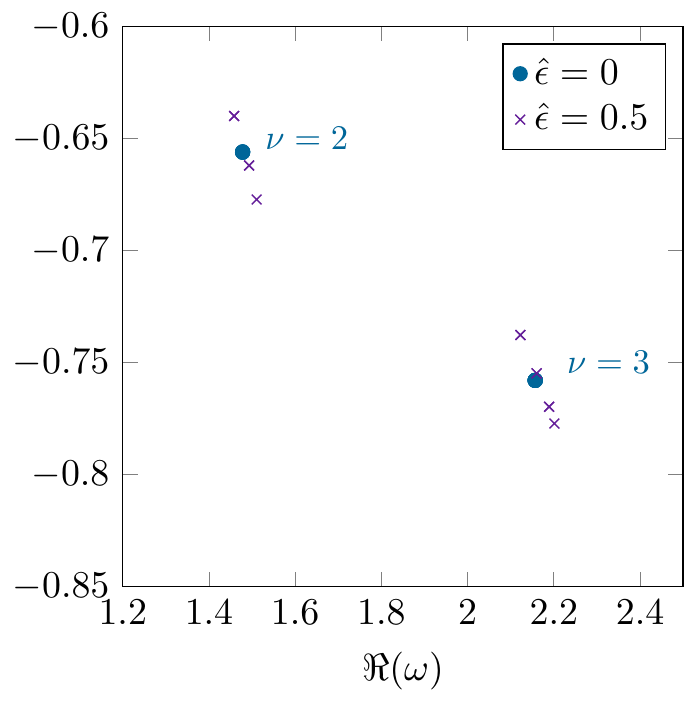}
  \end{subfigure}
  \begin{subfigure}{0.45\textwidth}
  \includegraphics{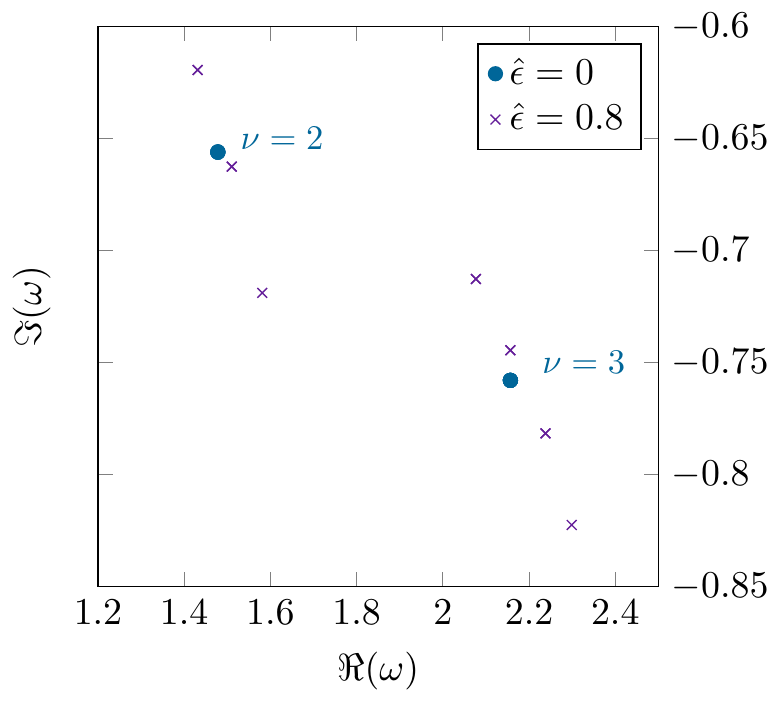}
  \end{subfigure}
  \caption{Inhomogeneous exterior problem with $\tilde\epsilon=1.5$ and variable $\hat\epsilon$. Although this is a three dimensional simulation, the given indices $\nu$ refer to the corresponding Hankel indices for the separated problem for $\hat\epsilon=0$}
  \label{fig:hh_perturbed}
\end{figure}
Figures \ref{fig:efs_r_pert} and \ref{fig:efs_pert} show selected resonance functions corresponding to the previously approximated resonances. To visualize the resonance functions $\Oi=B_2(0)\setminus B_1(0)$ was chosen here. In Fig.~\ref{fig:efs_r_pert} $\bdmod\varepsilon$ is zero, i.e. we have a rotationally invariant problem with radial inhomogeneity. Hence, the resonance functions are rotationally invariant as well. In Fig.~\ref{fig:efs_pert} the problem is not rotational invariant leading to resonance functions with perturbed symmetry. 
\begin{figure}[H]
\centering
  \begin{subfigure}{0.45\textwidth}
    \includegraphics{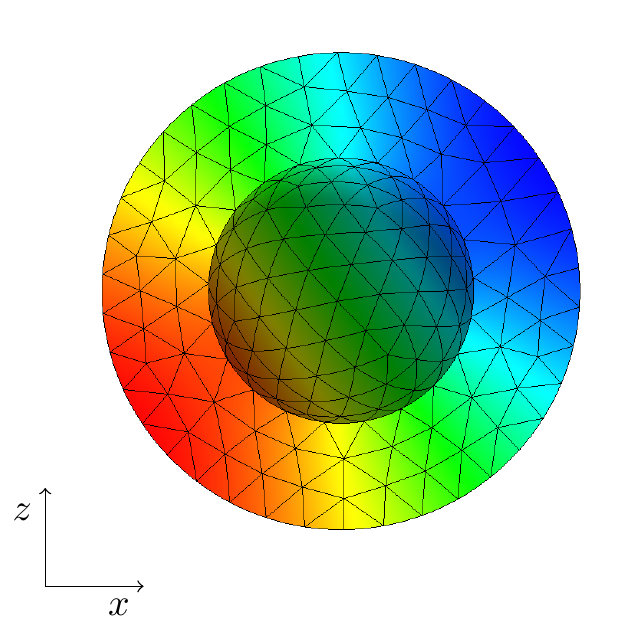}
    \caption{$\omega\approx0.87-0.50i$, $\nu=1$}
\end{subfigure}
  \begin{subfigure}{0.45\textwidth}
    \includegraphics{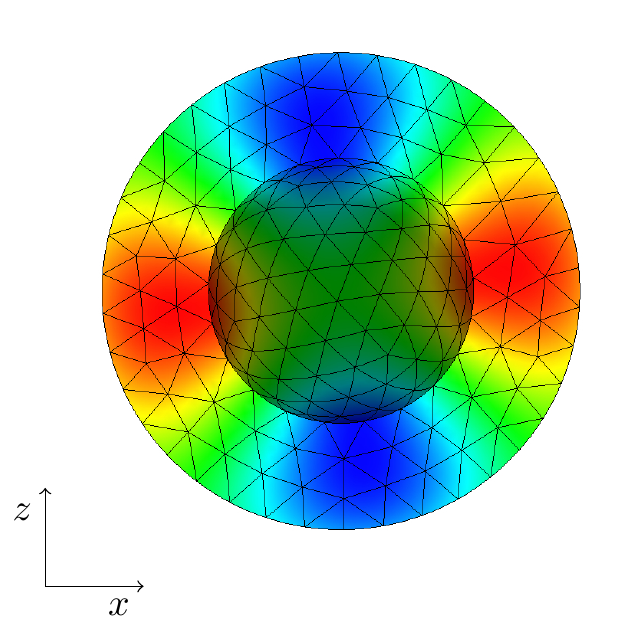}
    \caption{$\omega\approx1.47-0.52i$, $\nu=2$}
  \end{subfigure}
  \caption{Resonance functions corresponding to eigenvalues from Figure \ref{fig:hh_r_perturbed} with $\tilde\varepsilon=0.5$ and $\bdmod\varepsilon=0$. The given indices $\nu$ refer to the corresponding Hankel indices in the separated case.}
  \label{fig:efs_r_pert}
\end{figure}

\begin{figure}[H]
\centering
  \begin{subfigure}{0.45\textwidth}
    \includegraphics{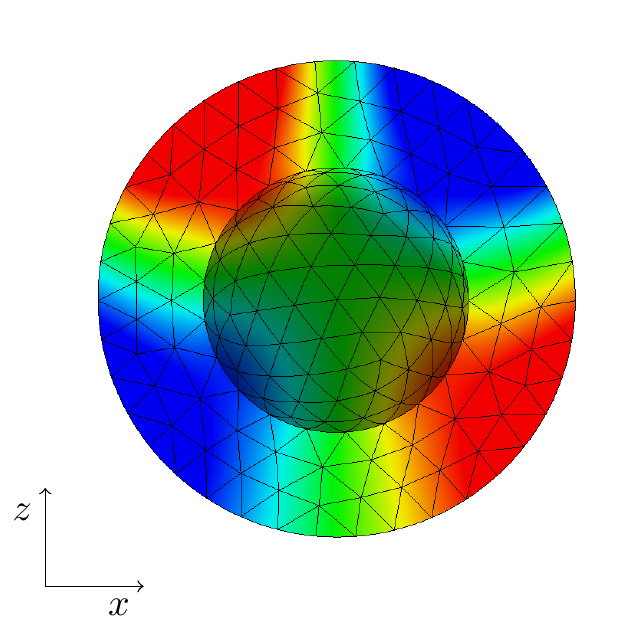}
    \caption{$\omega\approx1.49-0.52i$}
\end{subfigure}
  \begin{subfigure}{0.45\textwidth}
    \includegraphics{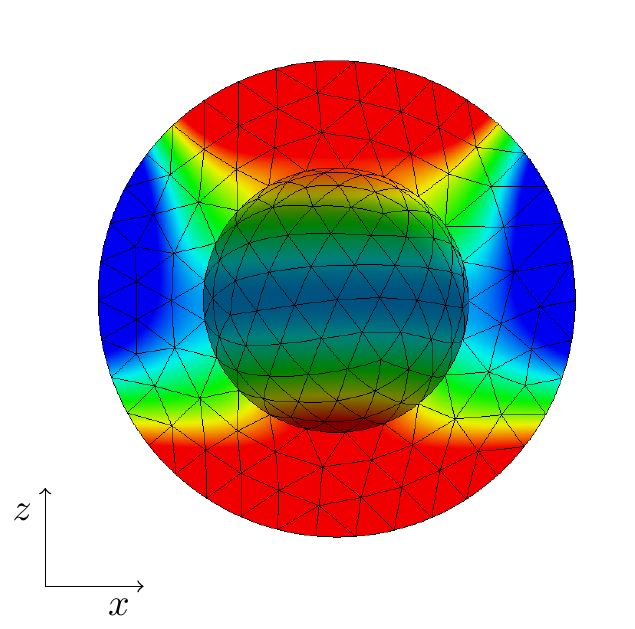}
    \caption{$\omega\approx1.50-0.53i$}
  \end{subfigure}
  \caption{Resonance functions corresponding to eigenvalues from Figure \ref{fig:hh_perturbed} with $\hat\varepsilon=0.5$ and $\bdmod\varepsilon=1.5$.}
  \label{fig:efs_pert}
\end{figure}

\section{Conclusion}
\label{sec:conclusion}

In summary we can say, that complex scaled infinite elements are a very effective method for Helmholtz resonance problems. In comparison to PMLs they are more efficient due to their super algebraic convergence and have the advantage of less method parameters to tune. Our approximation results give guidance on how to choose said parameters.
Our numerical results suggest, that complex scaled infinite elements are also applicable to inhomogeneous exterior domains.
A straightforward extension to the presented method would be the application of a frequency dependent complex scaling or the use of cartesian or normal scaling directions.

\newpage
\bibliographystyle{plain} 
\bibliography{bibliography}

\begin{thebibliography}{10}

\bibitem{Abramowitz}
Milton Abramowitz and Irene~A. Stegun.
\newblock {\em Handbook of mathematical functions with formulas, graphs, and
  mathematical tables}, volume~55 of {\em National Bureau of Standards Applied
  Mathematics Series}.
\newblock For sale by the Superintendent of Documents, U.S. Government Printing
  Office, Washington, D.C., 1964.

\bibitem{BabuskaOsborn}
I.~Babu\v{s}ka and J.~Osborn.
\newblock Eigenvalue problems.
\newblock In {\em Handbook of numerical analysis, {V}ol. {II}}, Handb. Numer.
  Anal., II, pages 641--787. North-Holland, Amsterdam, 1991.

\bibitem{Berenger:94}
Jean-Pierre Berenger.
\newblock A perfectly matched layer for the absorption of electromagnetic
  waves.
\newblock {\em J. Comput. Phys.}, 114(2):185--200, 1994.

\bibitem{Bermudezetal:07}
A.~Berm{\'u}dez, L.~Hervella-Nieto, A.~Prieto, and R.~Rod\-r{\'{\i}}\ guez.
\newblock An exact bounded perfectly matched layer for time-harmonic scattering
  problems.
\newblock {\em SIAM J. Sci. Comput.}, 30(1):312--338, 2007/08.

\bibitem{BramblePasciak:2013}
James~H. Bramble and Joseph~E. Pasciak.
\newblock Analysis of a {C}artesian {PML} approximation to acoustic scattering
  problems in {$\mathbb{R}^2$} and {$\mathbb{R}^3$}.
\newblock {\em J. Comput. Appl. Math.}, 247:209--230, 2013.

\bibitem{ColMonk}
Francis Collino and Peter Monk.
\newblock The perfectly matched layer in curvilinear coordinates.
\newblock {\em SIAM J. Sci. Comput.}, 19(6):2061--2090 (electronic), 1998.

\bibitem{ColtonKress:98}
David Colton and Rainer Kress.
\newblock {\em Inverse acoustic and electromagnetic scattering theory},
  volume~93 of {\em Applied Mathematical Sciences}.
\newblock Springer-Verlag, Berlin, second edition, 1998.

\bibitem{Halla:16}
Martin Halla.
\newblock Convergence of hardy space infinite elements for helmholtz scattering
  and resonance problems.
\newblock {\em SIAM Journal on Numerical Analysis}, 54(3):1385--1400, 2016.

\bibitem{Hallaetal:16}
Martin Halla, Thorsten Hohage, Lothar Nannen, and Joachim Sch\"{o}berl.
\newblock Hardy space infinite elements for time harmonic wave equations with
  phase and group velocities of different signs.
\newblock {\em Numer. Math.}, 133(1):103--139, 2016.

\bibitem{HallaNannen:15}
Martin Halla and Lothar Nannen.
\newblock Hardy space infinite elements for time-harmonic two-dimensional
  elastic waveguide problems.
\newblock {\em Wave Motion}, 59:94--110, 2015.

\bibitem{HallaNannen:18}
Martin Halla and Lothar Nannen.
\newblock Two scale {H}ardy space infinite elements for scalar waveguide
  problems.
\newblock {\em Adv. Comput. Math.}, 44(3):611--643, 2018.

\bibitem{HislopSigal}
P.~D. Hislop and I.~M. Sigal.
\newblock {\em Introduction to spectral theory}, volume 113 of {\em Applied
  Mathematical Sciences}.
\newblock Springer-Verlag, New York, 1996.
\newblock With applications to Schr{\"o}dinger operators.

\bibitem{HohageNannen:09}
Thorsten Hohage and Lothar Nannen.
\newblock Hardy space infinite elements for scattering and resonance problems.
\newblock {\em SIAM J. Numer. Anal.}, 47(2):972--996, 2009.

\bibitem{HohageNannen:15}
Thorsten Hohage and Lothar Nannen.
\newblock Convergence of infinite element methods for scalar waveguide
  problems.
\newblock {\em BIT Numerical Mathematics}, 55(1):215--254, 2015.

\bibitem{PC1}
Thorsten Hohage, Frank Schmidt, and Lin Zschiedrich.
\newblock Solving time-harmonic scattering problems based on the pole
  condition. {I}. {T}heory.
\newblock {\em SIAM J. Math. Anal.}, 35(1):183--210, 2003.

\bibitem{PC2}
Thorsten Hohage, Frank Schmidt, and Lin Zschiedrich.
\newblock Solving time-harmonic scattering problems based on the pole
  condition. {II}. {C}onvergence of the {PML} method.
\newblock {\em SIAM J. Math. Anal.}, 35(3):547--560, 2003.

\bibitem{KimPasciak:09}
Seungil Kim and Joseph~E. Pasciak.
\newblock The computation of resonances in open systems using a perfectly
  matched layer.
\newblock {\em Math. Comp.}, 78(267):1375--1398, 2009.

\bibitem{LassasSomersalo:98}
M.~Lassas and E.~Somersalo.
\newblock On the existence and the convergence of the solution of the pml
  equations.
\newblock {\em Computing}, 60:229--241, 1998.

\bibitem{moiseyev:98}
N~Moiseyev.
\newblock Quantum theory of resonances: Calculating energies, width and
  cross-sections by complex scaling.
\newblock {\em Physics reports}, 302:211--293, 1998.

\bibitem{NannenSchaedle:09}
Lothar Nannen and Achim Sch{\"a}dle.
\newblock Hardy space infinite elements for {H}elmholtz-type problems with
  unbounded inhomogeneities.
\newblock {\em Wave Motion}, 48(2):116--129, 2010.

\bibitem{nw2018}
Lothar Nannen and Markus Wess.
\newblock Computing scattering resonances using perfectly matched layers with
  frequency dependent scaling functions.
\newblock {\em BIT}, 58(2):373--395, 2018.

\bibitem{Saad}
Yousef Saad.
\newblock {\em Numerical methods for large eigenvalue problems}, volume~66 of
  {\em Classics in Applied Mathematics}.
\newblock Society for Industrial and Applied Mathematics (SIAM), Philadelphia,
  PA, 2011.
\newblock Revised edition of the 1992 original [ 1177405].

\bibitem{Schmidt:02}
Frank Schmidt.
\newblock A new approach to coupled interior-exterior {H}elmholtz-type
  problems: Theory and algorithms.
\newblock Habilitation, Freie Universität Berlin, 2002.

\bibitem{SD:95}
Frank Schmidt and Peter Deuflhard.
\newblock Discrete transparent boundary conditions for the numerical solution
  of {F}resnel's equation.
\newblock {\em Computers Math. Appl.}, 29:53--76, 1995.

\bibitem{netgen}
Joachim Sch\"oberl.
\newblock Netgen - an advancing front 2d/3d-mesh generator based on abstract
  rules.
\newblock {\em Comput.Visual.Sci}, 1:41--52, 1997.

\bibitem{ngsolve:14}
Joachim Sch\"oberl.
\newblock C++11 implementation of finite elements in ngsolve.
\newblock Preprint 30/2014, Institute for Analysis and Scientific Computing, TU
  Wien, 2014.

\bibitem{ShenTangWang}
Jie Shen, Tao Tang, and Li-Lian Wang.
\newblock {\em Spectral methods}, volume~41 of {\em Springer Series in
  Computational Mathematics}.
\newblock Springer, Heidelberg, 2011.
\newblock Algorithms, analysis and applications.

\bibitem{SteinbachUnger:2012}
O.~Steinbach and G.~Unger.
\newblock {Convergence analysis of a {G}alerkin boundary element method for the
  {D}irichlet {L}aplacian eigenvalue problem}.
\newblock {\em SIAM J. Numer. Anal.}, 50:710--728, 2012.

\bibitem{Temme}
Nico~M. Temme.
\newblock {\em Asymptotic methods for integrals}, volume~6 of {\em Series in
  Analysis}.
\newblock World Scientific Publishing Co. Pte. Ltd., Hackensack, NJ, 2015.

\end{thebibliography}
\end{document}